\newcommand{\RR}{\mathbb{R}}
\newcommand{\NN}{\mathbb{N}}
\newcommand{\ZZ}{\mathbb{Z}}
\newcommand{\QQ}{\mathbb{Q}}
\newtheorem{theorem}{Theorem}[section]
\newtheorem{Counter-example}[theorem]{Counter example}
\newtheorem{Lemma}[theorem]{Lemma}
\newtheorem{Proposition}[theorem]{Proposition}
\newtheorem{Corollary}[theorem]{Corollary}
\newtheorem*{theorem*}{Theorem}
\DeclareMathOperator*{\cmpct}{cpct}
\DeclareMathOperator*{\diag}{diag}
\newcommand\blfootnote[1]{%
  \begingroup
  \renewcommand\thefootnote{}\footnote{#1}%
  \addtocounter{footnote}{-1}%
  \endgroup
}
\title{Self embeddings of Bedford-McMullen carpets}
\author{Amir Algom and Michael Hochman}
\begin{document}
\maketitle

\begin{abstract}
Let\blfootnote{Supported by ERC grant 306494}\blfootnote{\emph{2010 Mathematics Subject Classification}. 28A80} $F \subseteq \mathbb{R}^2$ be a Bedford-McMullen carpet defined by multiplicatively independent exponents, and suppose that either $F$ is not a product set, or it is a product set with  marginals  of dimension strictly between $0$ and $1$.  We prove that any similarity $g$ such that $g(F) \subseteq F$ is an isometry composed of reflections about lines parallel to the axes. Our approach utilizes the structure of tangent sets of $F$, obtained by "zooming in" on points of $F$,  projection theorems for products of self-similar sets, and logarithmic commensurability type results for self similar sets in the line.

\end{abstract}

\tableofcontents

\section{Introduction}

Let $F \subseteq \mathbb{R}^2$ be a dynamically defined fractal, such as a smooth repeller or  self affine set. The problem we address in this paper is to classify those maps $g:F \rightarrow F$ satisfying $g(F) \subseteq F$. In many natural cases the hierarchical structure of $F$ and its defining parameters impose severe restrictions on $g$, and one may expect that $g$ should in some sense ``come from'' the generating dynamics, either belonging to the acting (semi-)group or to some larger class of maps related to it.

An early and motivating example of such a situation is Furstenberg's theorem  \cite{furstenberg1967disjointness} that if a closed subset $X\subseteq \mathbb{R}/\mathbb{Z}$ is non-trivial (neither finite or all of $\mathbb{R}/\mathbb{Z}$), and is invariant under an endomorphism $T$ of $\mathbb{R}/\mathbb{Z}$, then another endomorphism $S$ can map $X$ into itself if and only if $S,T$ lie in a common cyclic semi-group. Many generalizations of this exist to other algebraic contexts. In a slightly different direction,  the second author showed in \cite{hochman2010geometric} that if $\mu$ is an invariant measure of positive entropy for such an endomorphism $T$, and $g$ is a piecewise-monotone  $C^2$-map such that $g\mu\ll\mu$, then $\mu$-a.e. the derivatives of $g$ and $T$ agree.

More closely related to the present paper is the work of Feng and Wang \cite{feng2009structures}, who considered self-similar sets on the line and asked to what extent they determine the iterated function systems (IFSs) that generate them. An important component of their work, which will play a role in ours, is the so-called Logarithmic Commensurability Theorem (stated precisely in Theorem \ref{LCT} below). It asserts that if a self-similar set in the line is generated by maps which contract by $t$ (and satisfy certain mild conditions), then any similarity mapping the set into itself must contract by a rational power, and in some cases integer power, of $t$. Closely related to this is a conjecture of a Feng, Huang, and Rao \cite{feng2014affine}, that if two self similar sets in the line are defined by maps which contract, respectively, by $t_1$ and $t_2$, then (again under some assumptions) any similarity map embedding one into the other must contract by a rational power of both $t_1$ and $t_2$, and in particular, $\log t_1/\log t_2$ must be rational.

Another relevant paper is the that of  Elekes, Keleti and M\'{a}th\'{e} in \cite{elekes2010self}. They studied self-similar sets in $\mathbb{R}^d$, and, assuming strong separation, showed that if a similarity maps such a set into itself, then the image has non-empty interior, and some power of the map lies in the group generated by the maps in the IFS that originally  defined the set.

In this paper we consider the analogous question for Bedford-McMullen carpets $F$, under the assumption that the bases $m,n$ used to define it are multiplicatively independent. This means that $\frac{\log m}{\log n}\notin\mathbb{Q}$, and that $F$ is obtained by an iterative procedure, first partitioning the unit square  into an $m\times n$ grid of sub-rectangles and discarding a subset of them, repeating the procedure for each remaining sub-rectangle using the same pattern of discarded sub-rectangles as in the first stage, and continuing \emph{ad infinitum}. For a formal definition and other descriptions of $F$, see Section \ref{Bedford - McMullen carpets}. Our main result is the following:

\begin{theorem} \label{Main Theorem}
Let $m,n$ be multiplicatively independent integers and let $F$ be a Bedford-McMullen carpet which is not a product set. Then any similarity $g$ taking $F$ into itself is an isometry, and must be either the identity, a reflection through a horizontal or vertical line, or a composition of two such reflections.
\end{theorem}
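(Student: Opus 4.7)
Write $g(z)=rUz+v$ with $r>0$ a scalar, $U\in O(2)$ orthogonal, and $v\in\mathbb{R}^2$. Since $F$ is bounded and non-trivial, $g(F)\subseteq F$ forces $r\leq 1$. The plan is to establish three independent goals: (a) that the orthogonal part $U$ preserves the pair of coordinate axes as a set, so $U$ lies in the dihedral group of the square; (b) that in fact $U$ preserves each axis individually, ruling out the $\pm 90^\circ$ rotations and the two diagonal reflections; and (c) that $r=1$.

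For (a) I would exploit the mini-set (tangent set) structure of $F$. For a typical $x\in F$ with respect to the natural Bernoulli measure, and along certain subsequences of scales, the mini-set of $F$ at $x$ is of the product form $A\times[0,1]$ or $[0,1]\times B$, where $A,B\subseteq[0,1]$ are non-trivial self-similar subsets related to the coordinate projections of $F$. The crucial point is that such a mini-set contains a line segment in a coordinate direction. Multiplicative independence of $m,n$, via equidistribution of $\{k\log m/\log n\}$ modulo $1$, is what produces scales at which the ``short'' direction of $F$ fills out completely while the ``long'' direction retains its self-similar geometry. The similarity $g$ carries the mini-set of $F$ at $x$ to $U$ applied (up to rescaling) to a mini-set of $F$ at $g(x)$. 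Since mini-sets of $F$ are themselves carpet-like and contain line segments only in coordinate directions, $U$ must map coordinate directions to coordinate directions, which gives (a).

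For (b), if $U$ swapped the two axes then $g(F)\subseteq F$ would yield a non-trivial comparison between the horizontal and vertical slicing structures of $F$: horizontal slices would embed under $g$ into vertical slices and vice versa. Applying the projection theorems for products of self-similar sets, together with logarithmic commensurability, one finds that the contraction ratios controlling these two families of slices would need to be simultaneously commensurable with powers of $1/m$ and of $1/n$. This contradicts multiplicative independence unless $F$ itself factorizes as a product, which is excluded by hypothesis. For (c), with $U$ axis-preserving the map $g$ descends under the coordinate projections $\pi_1,\pi_2$ to similarities $g_i:\pi_i(F)\to\pi_i(F)$, each with contraction $r$. Since $\pi_1(F)$ is a self-similar set generated by maps of ratio $1/m$ and $\pi_2(F)$ by maps of ratio $1/n$, the Logarithmic Commensurability Theorem (Theorem \ref{LCT}) applied to each projection shows that $r$ is a rational power of both $1/m$ and $1/n$. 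The irrationality of $\log m/\log n$ then forces $r=1$, and combining with (a) and (b), $g$ is an isometry in the Klein four-group generated by the horizontal and vertical axis reflections.

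I expect the main obstacle to be step (a): producing, at a sufficiently rich set of points $x\in F$, mini-sets that genuinely contain full axis-parallel line segments, even though $F$ itself is not a product. One must zoom along carefully interleaved subsequences of the $m^{-k}$ and $n^{-k}$ scales so that the mismatch between the two contraction rates---quantified by the density of $\{k\log m/\log n\}$ in $[0,1]$---causes one direction to ``collapse'' to a filled interval in the limit while the other keeps its fractal structure. This is where multiplicative independence is used at its most essential, and is also where the product-set-with-trivial-marginal case must be excluded, since a $0$-dimensional marginal would prevent the filled direction from appearing in the tangents at all. Once these tangents are in hand, steps (b) and (c) are relatively mechanical reductions to one-dimensional tools.
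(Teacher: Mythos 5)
Your step (a) rests on a premise that is false for general (non-product) carpets: that $m$-adic tangent sets of $F$ contain axis-parallel line segments. By the structure theorem for tangent sets (Theorem \ref{Theorem - structure of tangnet sets}), a tangent set is, up to the anisotropic scaling $\diag(1,n^s)$ and finite unions, a product $\pi_m(\widetilde{F}_\xi)\times P_2(F)$; if $P_2\Gamma\neq[n]$ and $|\Gamma_j|<m$ for every $j$ (e.g.\ $m=5$, $n=3$, $\Gamma=\{(0,0),(2,1),(4,0)\}$), both factors are totally disconnected and \emph{no} tangent set contains any segment. Equidistribution of $l\log_n m$ mod $1$ does not make one direction ``collapse to a filled interval''; it only governs the vertical scaling $n^s$ appearing in the limit, while the vertical factor is always the global projection $P_2(F)$, not $[0,1]$. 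The paper's route to your (a)+(b) is different: it uses the \emph{product} structure of tangent sets together with the Peres--Shmerkin/Hochman--Shmerkin theorem (every non-principal linear image of a product of deleted-digit sets in multiplicatively independent bases has the maximal possible dimension) to show the orthogonal part must be diagonal or anti-diagonal, and then rules out the anti-diagonal case with the affine-embedding theorem (Theorem \ref{LCT2}) applied inside the tangent sets, with separate treatment of the degenerate situations $\dim P_2F=1$ or $\dim F_y=1$. Your sketch of (b) is in this spirit, but note that arbitrary slices are neither self-similar nor of intermediate dimension in general, so one cannot apply commensurability results to slices directly.

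Step (c) has a second genuine gap: the Logarithmic Commensurability Theorem (Theorem \ref{LCT}) requires $0<\dim K<1$, so your argument only works when both $\dim P_1F$ and $\dim P_2F$ lie strictly between $0$ and $1$. The paper presents exactly this argument as an illustrative special case and then devotes the bulk of the proof to the cases where one or both projections equal $[0,1]$ (which is perfectly compatible with $F$ not being a product; the paper's own example with $\Gamma=\{(0,0),(1,1),(2,0)\}$ has $P_2F=[0,1]$). Handling those cases requires the fixed-point analysis (tangent sets at the fixed point contain a homothetic copy of $F$, yielding a horizontal slice of dimension $\dim P_1F$ and a vertical slice of positive dimension), the results of Section \ref{Section - embedding deleted digit set} forcing the translation part to be adic-rational, and the three-way case split on the number of rows $j$ with $\Gamma_j=[m]$, including the combinatorial interval-length argument of case (B). None of this is addressed in your proposal, so as written the argument does not prove the theorem beyond the doubly-nontrivial-projection case.
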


The assumption that $F$ is not a product set excludes the cases when $F$ is a point or all of $[0,1]^2$, for which the conclusion is obviously false. It also excludes the case that $F$ is supported on a horizontal or vertical line, in which case one can show that it is a self-similar set on that line, and admits many non-trivial similarities preserving it. Also, without the assumption that the defining bases are multiplicatively independent, $F$ may be self-similar, and again will have many non-trivial similarities taking it into itself. We also note that $F$ can indeed be symmetric with respect to reflection about its vertical and horizontal bisecting lines, as occurs when the set of sub-rectangles removed at each step of the construction has the same kind of symmetry.

Nevertheless, we can say the same thing for product sets if we avoid trivial marginals. The statement we prove in this case is slightly stronger, in that it restricts self-embeddings by affine maps, not just than similarities:

\begin{theorem} \label{Theorem product sets}
Let $m,n$ be multiplicatively independent integers and $F$ a Bedford-McMullen carpet which is a product $F=K_1\times K_2$. If $0<\dim K_1,\dim K_2<1$, then any invertible affine map $g$ such that $g(F)\subseteq F$ has linear part $\diag(\pm m^p, \pm n^q)$ for $p,q \in \mathbb{Q}$. In particular, if $g$ is a similarity map then $p=q=0$ and $g$ is an isometry as in Theorem \ref{Main Theorem}.
\end{theorem}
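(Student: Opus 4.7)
The plan is to reduce the classification of affine self-embeddings of $F=K_1\times K_2$ to the one-dimensional Logarithmic Commensurability Theorem (LCT) applied to each factor, after first using a projection theorem for products of self-similar sets with multiplicatively independent contraction ratios to force the linear part of $g$ to be either diagonal or anti-diagonal, and then ruling out the anti-diagonal case via another LCT argument.

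I would write $g(x,y)=A(x,y)+t$ with $A=\begin{pmatrix} a & b \\ c & d \end{pmatrix}$ and $t=(t_1,t_2)$. Since $F=K_1\times K_2$, the coordinate projections of $g(F)\subseteq F$ satisfy
\[
aK_1+bK_2+t_1\subseteq K_1,\qquad cK_1+dK_2+t_2\subseteq K_2.
\]
The set $aK_1+bK_2$ is, up to a scalar, the linear projection of $F$ in the direction determined by $(a,b)$, and similarly for $(c,d)$. The projection theorem for products of self-similar sets with multiplicatively independent contractions guarantees that every non-axis projection of $F$ has Hausdorff dimension $\min(1,\dim F)=\min(1,\dim K_1+\dim K_2)$, which strictly exceeds $\dim K_i$ for each $i$ under the hypothesis $0<\dim K_1,\dim K_2<1$. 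Hence neither $(a,b)$ nor $(c,d)$ can have both coordinates nonzero; combined with invertibility of $A$, this leaves exactly two possibilities: $A$ is diagonal, or $A$ is anti-diagonal.

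Next I would rule out the anti-diagonal case. If $A=\begin{pmatrix} 0 & b \\ c & 0 \end{pmatrix}$, the inclusion becomes $bK_2+t_1\subseteq K_1$ and $cK_1+t_2\subseteq K_2$, giving similarity embeddings $\phi:K_2\to K_1$ and $\psi:K_1\to K_2$ of contraction ratios $|b|$ and $|c|$. The composition $\psi\phi$ is a similarity self-embedding of $K_2$ of contraction $|bc|$, so LCT applied to $K_2$ (common contraction $1/n$) gives $|bc|=n^{-p}$ with $p\in\QQ$, and symmetrically $|bc|=m^{-q}$ with $q\in\QQ$; multiplicative independence of $m,n$ forces $p=q=0$, i.e.\ $|bc|=1$. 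Then $\psi\phi$ and $\phi\psi$ are continuous isometric self-embeddings of compact sets, hence bijections, so $\phi:K_2\to K_1$ is an affine bijection. Conjugating any IFS generator of $K_1$ (contraction $1/m$) through $\phi$ produces a similarity self-embedding of $K_2$ of contraction $1/m$; a final application of LCT to $K_2$ gives $1/m=n^{-r}$ for some $r\in\QQ$, contradicting multiplicative independence.

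In the remaining diagonal case $A=\diag(a,d)$, the inclusion reduces to $aK_1+t_1\subseteq K_1$ and $dK_2+t_2\subseteq K_2$; LCT applied to each factor immediately yields $|a|=m^p$, $|d|=n^q$ for some $p,q\in\QQ$, giving the claimed form of the linear part. If $g$ is a similarity then $|a|=|d|$, so $m^p=n^q$ and multiplicative independence forces $p=q=0$, recovering the isometric conclusion of Theorem \ref{Main Theorem}. The main obstacle in this plan is the projection step: I need the stronger statement that \emph{every} non-axis projection of $F$ has maximal dimension, rather than almost every (as in Marstrand's theorem), and such ``no exceptional directions'' results for products of self-similar sets exploit multiplicative independence of the contractions in an essential way, so this is precisely where the hypothesis $\log m/\log n\notin\QQ$ does the heavy lifting. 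Verifying that the separation and irreducibility hypotheses needed for the projection theorem and for LCT are inherited from the Bedford--McMullen structure of $F$ is a routine but necessary preliminary.
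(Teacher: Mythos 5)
Your proof is correct, and its skeleton matches the paper's: you use the projection theorem (Theorem \ref{Hochman - Shmerkin}) to show each row of the linear part must have a zero entry, hence $A$ is diagonal or anti-diagonal, and in the diagonal case you apply the Logarithmic Commensurability Theorem (Theorem \ref{LCT}) to each coordinate, with multiplicative independence forcing $p=q=0$ when $g$ is a similarity. Where you genuinely diverge is the anti-diagonal case: the paper disposes of it in one line by citing Theorem \ref{LCT2} (Feng--Huang--Rao, Hochman--Shmerkin), which rules out any non-singular affine embedding of $K_2$ into $K_1$ when the bases are multiplicatively independent and the dimensions lie in $(0,1)$. You avoid that theorem entirely: composing the cross-embeddings $\phi:K_2\to K_1$ and $\psi:K_1\to K_2$ gives self-embeddings of each factor with the same ratio $|bc|$, two applications of Theorem \ref{LCT} plus multiplicative independence force $|bc|=1$, compactness makes the resulting isometric self-embeddings surjective so that $\phi$ is an affine bijection of $K_2$ onto $K_1$, and conjugating a generator of $K_1$ through $\phi$ produces a contraction-$1/m$ self-embedding of $K_2$, contradicting Theorem \ref{LCT} once more. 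This buys a proof of the anti-diagonal step resting only on the weaker Feng--Wang result (at the cost of the small extra lemma that an isometry of a compact metric space into itself is onto), whereas the paper's appeal to Theorem \ref{LCT2} is shorter and costs nothing in context, since that theorem is needed elsewhere in the paper anyway (e.g. in the proof of Theorem \ref{thm:reduction-to-homotheties}).
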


The assumptions are again necessary to avoid trivial exceptions, such as self-similar sets supported on lines, for which the defining similarities extend in many ways to affine maps off of the supporting line, or products of an interval with a self-similar set, where the same thing happens. Independence of the bases is also necessary; for instance, consider the standard middle-$1/3$ Cantor set $C$; then $C\times C$ is a Bedford-McMullen carpet, yet there are many similarities taking it into itself, e.g. any map of the form $(x,y)\mapsto (x/3^k,y/3^k)$.

The mechanism that we use is essentially different from that used by Elekes et al in the self-similar case. There, the crucial ingredient was that when $F$ is a self-similar set with strong separation and dimension $s=\dim F$, the restriction $\mu=\mathcal{H}^s|_F$ of the $s$-dimensional Hausdorff measure is positive and finite. If $g$ is a similitude then $g(F)$ also has positive finite $\mu$-measure, and we can conclude from $g(F) \subseteq F$ that $g\mu\ll\mu$. One then can apply analytic techniques to study ``density points'' of $d(g\mu)/d\mu$, combined with re-scaling (using self-similarity of $F$) to gain control of the map. In fact, Elekes, Keleti and M\'{a}th\'{e} obtained analogous results for Bedford-McMullen carpets $F$ and similitudes $g$ that map some self-affine measure $\mu$ to a measure $g\mu\ll\mu$. However, for self-affine measures the condition $g(F) \subseteq F$ no longer implies that $g(F)$ has positive $\mu$-measure, nor would this imply the required absolute  continuity; and the more natural Hausdorff measure is infinite \cite{Peres1993}. So this method cannot be used to prove Theorem \ref{Main Theorem}.

Instead we pursue a more geometric analysis. An important input to our proof is a description of the tangent sets of $F$ (Hausdorff limits of suitably re-scaled balls in $F$), and of the horizontal and vertical slices of $F$. By a careful analysis of how these sets are transformed by $g$ and how they can be mapped into each other, we are able first to show that the linear part of $g$ is a diagonal matrix (so $g^2$ is a homothety), and then that the contraction must be $\pm 1$. The main ingredient in the first stage is the projection theorems of Peres-Shmerkin \cite{peres2009resonance}, Hochman-Shmerkin \cite{hochman2009local}, applied to the tangent sets. For the second part we apply one dimensional results, like the Logarithmic Commensurability Theorem mentioned earlier, to slices and projections of $F$. We remark that the tangent sets of Bedford-McMullen carpets have been described in several recent works, e.g. Bandt and K\"{a}enm\"{a}ki  \cite{bandt2013local} and K\"{a}enm\"{a}ki, Koivusalo and Rossi \cite{kaenmaki2015self}, but there the authors considered the limits taken by magnifying the set around  typical point for a self-affine measure. For our application we require a description of the limit sets at every point.

After this work was completed we became aware of a parallel project by K{\"a}enm{\"a}ki, Ojala and Rossi \cite{kaenmaki2016rigid}. They use similar techniques to show that, in a certain class of self-affine sets $F$ (similar but not comparable to the one we study), any quasi-symmetric map taking $F$ into $F$ is quasi-Lipschitz, which means that $d(g(x),g(y))= d(x,y)^{1+o(1)}$ as $x\to y$. This quasi-Lipschitz property is weaker than being Lipschitz and certainly doesn't imply it, so comparing our results to theirs, we see that we obtain stronger information (the maps are isometries) starting from a more restricted class of self-embeddings (similarities). Nevertheless, many of the methods are similar, in particular the use of tangent sets as an invariant of regular maps.

\textbf{Organization} The next section contains the basic definitions and background material. Section \ref{Section - embedding deleted digit set} contains an auxiliary  theorem about the translation part of affine self-embeddings of certain self-similar sets in the line. Section \ref{subsec:tangent-sets} introduces the definition and basic properties of tangent sets of Bedford-McMullen carpets. Theorem \ref{Theorem product sets} is proved in Section \ref{Section - proof of product set theorem}, followed by Theorem \ref{Main Theorem} in Section \ref{Section - proof of main Theorem}, and  Theorem \ref{Theorem - structure of tangnet sets}, about the structure of tangent sets, is proved in Section \ref{Section - proof of Theorem tangent sets}.

\textbf{Acknowledgment} This research was conducted as part of the first author's Ph.D. studies. The first author would like to thank Shai Evra for many helpful discussions. The authors are grateful for the hospitality and support received from ICERM as part of the spring 2016 program on dimension and dynamics.

\section{Preliminaries and notation}
\subsection{Iterated function systems} \label{digit restriction definition}
Let $\Phi = \lbrace \phi_i \rbrace_{k=1} ^l, l\in \mathbb{N}, l\geq 2$ be a family of contractions $\phi_i : \mathbb{R}^d \rightarrow \mathbb{R}^d, d\geq 1$. The family $\Phi$ is called an iterated function systems, abbreviated IFS. There exists a unique compact set $\emptyset \neq F \subseteq \mathbb{R}^d$ such that $F = \bigcup_{i=1} ^l \phi_i (F)$, called the attractor of $\Phi$, and $\Phi$ is called a generating IFS for $F$.  A cylinder set is defined to be a set of the form $\phi_{i_1} \circ ... \circ \phi_{i_k} (F)$, where $\phi_i \in \Phi$ for all $i$ and $k\in \mathbb{N}$. Writing $I= (i_1,...,i_k)\in [l]^k$ and denoting $\phi_I = \phi_{i_1} \circ ... \circ \phi_{i_k}$, cylinder sets have the form $\phi_I (F), I\in [l]^*$.  Every open non empty set in $F$ contains a cylinder set. 

A map $g :\mathbb{R}^2 \rightarrow \mathbb{R}^2$ is a similarity map if 
\begin{equation*}
g(x) = \alpha\cdot O(x) + t
\end{equation*}
where $\alpha > 0$, $t \in \mathbb{R}^2$  and $O$ is an orthogonal matrix. We call $\alpha$ the contraction, $O$ its linear part, and $t$ the translation, of $g$. More generally if $g$ is an affine map, i.e.
\begin{equation*}
g(x)= A(x)+t
\end{equation*}
with $A\in GL(\mathbb{R}^2)$ and $t\in \mathbb{R}^2$, then $A$ is called the linear part of $g$ and $t$ its translation part. In this paper all affine maps are non-singular (though we may state this explicitly for emphasis).

A set $F \subseteq \mathbb{R}^d$ will be called self similar if there exists a generating IFS $\Phi$ for $F$ such that every $\phi \in \Phi$ is a similarity map. Similarly, a set $F \subseteq \mathbb{R}^d$ will be called self affine if there exists a generating IFS $\Phi$ for $F$ such that every $\phi \in \Phi$ is an affine map. Thus, self similar sets are self affine, but the converse is not true in general. 

\subsection{Deleted-digit sets}
Let $m \in \mathbb{N}$ be such that $m\geq 2$, and denote 
\begin{equation*}
[m]=\lbrace 0,...,m-1 \rbrace 
\end{equation*}
Let $\Lambda \subseteq [m]$. The deleted digit set with base $m$ and digits $\Lambda$ is the set $D(\Lambda ,m)  \subseteq [0,1]$ of real numbers in $[0,1]$ that  admit an expansion in base $m$ that uses only the digits in $\Lambda$. Explicitly,
\begin{equation} \label{Set defined by digit restriction}
D(\Lambda,m) = \lbrace \sum_{k=1} ^\infty \frac{x_k}{m^k} \; |\;  x_k \in \Lambda \rbrace.
\end{equation}
For example, the middle thirds Cantor set, for example, is equal to $D (\lbrace 0,2 \rbrace,3)$. It is not hard to verify that $D(\Lambda ,m)$ is the attractor of the IFS $\Phi = \lbrace \phi_j \rbrace_{j\in \Lambda}$ where $\phi_j (x) = \frac{x+j}{m}$.
Thus, $D(\Lambda ,m) $ is a self similar set, and the IFS above satisfies the open set condition (using the open set $U = (0,1)$) and so by e.g. Hutchison \cite{hutchinson1979fractals}, it follows that 
\begin{equation*}
\dim D(\Lambda,m) = \frac{\log |\Lambda|}{\log m}
\end{equation*}
where by $|\Lambda|$ we mean the cardinality of the finite set $\Lambda$, and by $\dim$ we mean the Hausdorff dimension. 

Let $\Omega_m = {[m]}^\mathbb{N}$ denote the space of one-sided sequences in the alphabet $\lbrace 0,...m-1 \rbrace$, which is a compact topological space in the product topology. Let $\sigma_m : \Omega_m \rightarrow \Omega_m$ be the left shift,  defined for $\omega\in\Omega_m$ by
\begin{equation*}
(\sigma_m ( \omega ))_{p} = \omega_{p+1}.
\end{equation*}
Define the ``projection'' $\pi_m : \Omega_m \rightarrow [0,1]$ by
\begin{equation*}
\pi_m (\omega) = \sum_{k=1} ^\infty \frac{\omega_k}{m^k}, \quad \omega= (\omega_1 ,...).
\end{equation*}
If $K = D(\Lambda,m)$ then it is the image of  $\widetilde{K} = \Lambda^\mathbb{N} \subseteq \Omega_m$ under $\pi_n$, i.e. $K=\pi_n(\widetilde{K})$.
The map $\pi_m |_{\widetilde{K}}$ is a continuous surjection to $K$, but can fail to be injective on countably many points, specifically, rationals in $(0,1)$ of the form $k/m^n$ have two preimages under $\pi_m$ (but note that $0,1$ have only one pre-image). Also, we remark that $\widetilde{K} \subseteq \pi_m ^{-1} (K)$, but the two sets might not be equal.

We shall call rationals in $(0,1)$ of the form $k/m^n$, where $0\leq k\leq m^n$ is an integer, $m$-adic rationals.  We extend the definition of  $\pi_m$ to finite sequences, so if $a\in[m]^k$ then $\pi_m(a)=\sum_{i=1}^ka_im^{-i}$. Thus the $m$-adic rationals are precisely the images $\pi_m(a)$ of $a\in[m]^*$.

We shall say that $x\in[0,1]$ has a unique expansion in base $m$ if it has a unique pre-image under $\pi_m$. This differs slightly from the usual usage at the point $0$ and $1$: by our definition they have a unique expansion, whereas usually they are considered to have two. The difference is that we only consider expansions ``without an integer part''.

\subsection{Logarithmic Commensurability}

Several results are known which give algebraic constraints on maps taking self-similar sets on the line into themselves or into each other. Here we state a simplified version for deleted digit sets, which says that such maps are only possible when the bases are compatible. 

\begin{theorem}[Feng-Wang \cite{feng2009structures}] \label{LCT}
Let $K=D(\Lambda,m)$ with $0<\dim K<1$ and let $g(x)=\alpha x+t$ be a similarity with $gK\subseteq K$. Then $\frac{\log |\alpha|}{\log m}\in \mathbb{Q}$.
\end{theorem}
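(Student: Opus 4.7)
I would argue by contradiction: assume $\beta := \log|\alpha|/\log m \notin \mathbb{Q}$ and produce a family of self-embeddings of $K$ that is so rich it is incompatible with $\dim K < 1$.

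The plan is to combine iterates of $g$ with the natural IFS maps to manufacture self-embeddings of $K$ with a controllable contraction ratio. For each $n \geq 1$, the iterate $g^n$ is a similarity of $K$ into itself with contraction $|\alpha|^n = m^{n\beta}$. By Weyl's equidistribution theorem, the fractional parts $\{-n\beta\}$ are dense (in fact equidistributed) in $[0,1)$. Fix $n$ and set $\ell_n := \lfloor -n\beta \rfloor$; then $m^{n\beta + \ell_n} = m^{-\{-n\beta\}} \in (1/m, 1]$. Now $g^n(K)$ has diameter $|\alpha|^n \cdot \mathrm{diam}(K)$, which is comparable to a cylinder of level $\ell_n$ in $K$. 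Pass to a deep enough subcylinder $\phi_{J_n}(K) \subseteq K$ so that $g^n(\phi_{J_n}(K))$ is contained in a single cylinder $\phi_{I_n}(K)$ of $K$ of level $|J_n|+\ell_n$ (this is possible because, under the OSC for $D(\Lambda,m)$, cylinders at any given level have positive separation relative to their diameter away from a negligible set of words). Define
\[
h_n := \phi_{I_n}^{-1} \circ g^n \circ \phi_{J_n} : K \to K,
\]
which is a similarity with contraction $c_n := m^{n\beta + \ell_n} \in (1/m, 1]$.

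Next, invoke compactness. The family $\{h_n\}$ consists of similarities $K \to K$ with contractions in the compact interval $(1/m,1]$ and translations uniformly bounded (since $K \subseteq [0,1]$). By Arzel\`a--Ascoli and a diagonal extraction, combined with the density of $\{-n\beta\}$ in $[0,1)$, for every $\gamma \in [0,1]$ there is a subsequence $n_k$ such that $h_{n_k}$ converges uniformly on $K$ to a similarity $h_\gamma$ with contraction $m^{-\gamma}$ satisfying $h_\gamma(K) \subseteq K$. In particular, the multiplicative semigroup $S := \{c \in (0,1] : \exists\, b \in \mathbb{R},\; cK + b \subseteq K\}$ contains both $\{m^{-k}\}_{k\ge 0}$ and a set dense in $(1/m,1]$, hence $\overline{S} = (0,1]$.

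The final, and hardest, step is converting this density into a contradiction with $\dim K < 1$. The plan here is to exploit the strong structure of deleted-digit sets: cylinder sets of $K$ occupy positions determined by the $m$-adic grid, and any similarity embedding $cK + b \subseteq K$ must respect this hierarchical structure on sufficiently fine scales. Concretely, I would take a limit $h_\gamma$ with $\gamma$ irrational and independent of $\log m$, iterate it, and combine with the IFS to obtain, at a single point $x_0 \in K$, images of $K$ of a set of diameters dense in $(0,\epsilon)$. A density/porosity argument then shows this forces $\mathcal{L}^1(K) > 0$, contradicting $\dim K < 1$; alternatively, one can use an ``Erd\H{o}s-type'' trichotomy for the semigroup $S$ to conclude directly that $S$ is discretely generated by a rational power of $m$, whence $\beta \in \mathbb{Q}$.

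The main obstacle is the last step: translating the density of achievable contraction ratios into a genuine contradiction. The soft part (producing the rich semigroup $S$ via iteration, equidistribution, and Arzel\`a--Ascoli) is essentially formal; the rigidity statement that a set of positive but non-integer Hausdorff dimension in $[0,1]$ cannot admit self-embeddings at a dense set of scales is where the real content of the Feng--Wang theorem lies, and is most naturally handled via careful analysis of the cylinder structure of $D(\Lambda,m)$ combined with either Marstrand-type slicing or a porosity estimate based on the OSC.
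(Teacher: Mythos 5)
The paper does not prove this statement at all --- it is imported from Feng--Wang --- so your argument has to stand on its own, and it does not: the step you yourself defer (``translating the density of achievable contraction ratios into a genuine contradiction'') is not a technical afterthought but the entire content of the theorem, and the tools you gesture at do not supply it. Your soft part (iterate $g$, renormalize by cylinder maps, equidistribution of $\{-n\beta\}$, compactness of uniformly bounded similarities) would show that the closed set $S=\{c\in(0,1]:\exists b,\ cK+b\subseteq K\}$ contains an interval of scales. But the inference that this forces $\mathcal{L}^1(K)>0$ or is otherwise incompatible with $0<\dim K<1$ is false as a general principle about compact sets, so no porosity/Marstrand/``Erd\H{o}s-type trichotomy'' argument that ignores the $m$-adic digit structure can work. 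Concretely, let $C$ be the middle-thirds Cantor set and $X=\{0\}\cup\bigcup_{k\ge1}2^{-k}C$. Then $X$ is compact with $\dim X=\log 2/\log 3\in(0,1)$, yet $2^{-j}3^{-i}X\subseteq X$ for all $i,j\ge0$ (because $3^{-i}C\subseteq C$); since $\{2^{-j}3^{-i}\}$ is dense in some interval $(0,\delta)$ and the set of admissible ratios is closed under limits (by exactly the compactness argument you invoke), \emph{every} $c\in(0,\delta]$ admits $b$ with $cX+b\subseteq X$. Hence richness of scales alone contradicts nothing; any correct proof must re-enter the arithmetic cylinder structure of $D(\Lambda,m)$, which is precisely the rigidity Feng and Wang establish by a direct quantitative analysis. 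In effect your proposal reduces the theorem to a claim that is no easier than the theorem itself, and also relies on an unproved dichotomy (``positive dimension less than one cannot admit self-embeddings at a dense set of scales'') that, stated at that level of generality, is wrong.

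There is also a secondary, repairable gap in the construction of $h_n=\phi_{I_n}^{-1}\circ g^n\circ\phi_{J_n}$: the containment of $g^n(\phi_{J_n}(K))$ in a \emph{single} cylinder of level $|J_n|+\ell_n$ is not automatic. When $c_n$ is near $1$ and $\mathrm{diam}(K)$ is near $1$ the image cannot even fit in an interval of that level, and in any case the image may straddle an $m$-adic point of low level no matter how deep $J_n$ is; ``away from a negligible set of words'' is an assertion, not an argument. This can be fixed by targeting a cylinder a bounded number of levels coarser and choosing $J_n$ so that the image is centered at a point of $K$ whose base-$m$ tail is infinitely often non-extreme (only countably many points of $K$ fail this), at the harmless cost of $c_n$ ranging in $(m^{-C},1]$ for a fixed $C$. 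But even with that repaired, the argument stops exactly where the real difficulty begins.
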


The following Theorem states that deleted digit sets defined by multiplicatively independent bases do not admit affine embeddings into each other (apart from the trivial ones):
\begin{theorem}[Feng-Huang-Rao \cite{feng2014affine}, Hochman-Shmerkin \cite{hochmanshmerkin2015}]\label{LCT2}
  Let $K_1=D(\Lambda,m)$ and $K_2=D(\Delta,n)$. If $0< \dim K_1 ,\dim K_2<1$ and $\log m/\log n\not\in\mathbb{Q}$, then any similarity mapping $K_1$ into $K_2$ maps to a single point.
\end{theorem}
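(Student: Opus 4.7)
The plan is to argue by contradiction: assume $g(x) = \alpha x + t$ is a similarity with $\alpha \neq 0$ and $g(K_1) \subseteq K_2$, and derive a contradiction in two stages. First, manufacture a one-parameter family of similarity embeddings $K_1 \hookrightarrow K_2$ with all contractions in some interval; second, apply projection theorems for products of self-similar sets with multiplicatively independent bases to get a dimension contradiction. Up to composing with the reflection $x \mapsto 1-x$ (which preserves the class of deleted-digit sets), I assume $\alpha > 0$.

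For the first stage, for each $k$ and each $I \in \Lambda^k$, the composition $g \circ \phi_I : K_1 \to K_2$ is a similarity of contraction $\alpha m^{-k}$. Choosing $\ell = \ell(k)$ so that $\alpha m^{-k} n^\ell \in [1, n^r]$ for fixed $r$ and so that the image $g\phi_I(K_1)$ lies inside a single level-$\ell$ cylinder $\psi_J(K_2)$ of $K_2$ (which can be arranged at the cost of increasing $\ell$ by a bounded amount), the map $\psi_J^{-1} \circ g \circ \phi_I$ is a similarity $K_1 \to K_2$ of contraction $\alpha m^{-k} n^{\ell(k)}$. Since $\log m/\log n \notin \mathbb{Q}$, Weyl's equidistribution theorem shows that the set $\{\alpha m^{-k} n^{\ell(k)}\}_k$ is dense in $[1, n^r]$. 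Compactness of the space of similarities with contraction in $[1, n^r]$ and image in the compact set $K_2$ then yields, for every $\beta$ in a non-degenerate interval $I_0 \subseteq [1, n^r]$, an honest similarity embedding $h_\beta(x) = \beta x + t(\beta)$ with $h_\beta(K_1) \subseteq K_2$.

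For the second stage, I would convert this family into a dimension contradiction. The map $F : I_0 \times K_1 \to K_2$, $F(\beta, x) = \beta x + t(\beta)$, places a continuum of scaled copies of $K_1$ inside $K_2$. Fixing $x_0 \neq x_1 \in K_1$, the differences $F(\beta, x_1) - F(\beta, x_0) = \beta(x_1 - x_0)$ sweep out an interval, so $K_2 - K_2$ contains an interval; this alone is insufficient, as $K_2 - K_2$ can have dimension $1$. The resolution is to apply the projection theorem for sums of self-similar sets of multiplicatively independent bases (Peres-Shmerkin, Hochman-Shmerkin), which gives $\dim(\alpha K_1 + K_2) = \min(\dim K_1 + \dim K_2, 1)$ for every $\alpha \neq 0$, and crucially to combine this with a scenery-flow analysis of the family $\{h_\beta\}$. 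Passing to self-similar measures $\mu_1, \mu_2$ of maximal dimension on $K_1, K_2$ and studying the pushforwards $(h_\beta)_* \mu_1$ against the $n$-magnification action on $\mu_2$, one expects to show that the image of $F$ has Hausdorff dimension $1$, contradicting $\dim K_2 < 1$.

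The main obstacle lies in this final step. A single embedding $h_\beta$ only yields $\dim K_1 \leq \dim K_2$, which is already implied by $g(K_1) \subseteq K_2$, so the whole family $\{h_\beta\}_{\beta \in I_0}$ must be exploited jointly with the $n$-self-similarity of $K_2$. The technical heart is establishing that the union $\bigcup_{\beta \in I_0} h_\beta(K_1)$ has dimension strictly greater than $\dim K_2$, which requires careful control over how the (possibly discontinuous) translation $t(\beta)$ behaves with $\beta$, and uses the equidistribution properties of the magnification action by $n$ on scenery distributions supported on $K_2$.
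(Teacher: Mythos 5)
First, note that the paper does not prove this statement at all: it is quoted as an external result of Feng--Huang--Rao and Hochman--Shmerkin, so there is no internal proof to compare your argument against; you are in effect attempting to reprove a substantial theorem from the literature.

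Your proposal has a genuine gap, and you have located it yourself: the entire second stage is conjectural. Stage 1 (composing $g$ with cylinder maps $\phi_I$ of $K_1$, renormalizing by a cylinder map of $K_2$, and using equidistribution of $k\log_n m \bmod 1$ plus compactness to produce embeddings $h_\beta(K_1)\subseteq K_2$ for every $\beta$ in an interval of contractions) is a reasonable and fairly standard reduction, although even there the claim that $g\phi_I(K_1)$ can always be placed inside a \emph{single} cylinder $\psi_J(K_2)$ of comparable scale ``at the cost of increasing $\ell$ by a bounded amount'' is not automatic: a small subset of $K_2$ may straddle an $n$-adic boundary at all scales, and one needs an extra argument (pigeonhole on a sub-piece, or working with bounded unions of cylinders) to justify it. The real problem is Stage 2. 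From the family $\{h_\beta\}_{\beta\in I_0}$ you only extract that $K_2-K_2$ contains an interval, which yields a contradiction solely when $\dim K_2<1/2$; for $\dim K_2\geq 1/2$ you appeal to ``a scenery-flow analysis'' and write that ``one expects to show'' that $\bigcup_\beta h_\beta(K_1)$ has dimension exceeding $\dim K_2$. That expected inequality is precisely the hard content of the theorem, and nothing in your sketch establishes it: the translations $t(\beta)$ produced by your compactness argument come with no continuity or measurability in $\beta$, Theorem \ref{Hochman - Shmerkin} as stated concerns linear images of a fixed product $K_1\times K_2$ and does not apply to the set $\{\beta x+t(\beta)\}$, and the genuine proofs in the cited sources require either special-case arguments (Feng--Huang--Rao) or the full machinery of CP-distributions, local entropy averages and equidistribution of magnification dynamics (Hochman--Shmerkin), none of which is supplied or replaced here. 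As it stands the proposal is a plausible opening reduction followed by a statement of the theorem's main difficulty, not a proof.
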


Theorem \ref{LCT} has the following easy extension:

\begin{Corollary} \label{Corollary from LCT}
Let $K=D(\Lambda,m)$ and $0<\dim K<1$. Let $g(x) = \alpha x +t$ be an affine map of the line such that for some $c_i\in\RR$,
\begin{equation*}
K \subseteq \bigcup_{i=1}^\infty (g(K) +c_i).
\end{equation*}
Then $\frac{\log |\alpha|}{\log m} \in \mathbb{Q}$.
\end{Corollary}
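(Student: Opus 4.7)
The plan is to use the Baire category theorem to reduce Corollary~\ref{Corollary from LCT} to Theorem~\ref{LCT}. Since $K$ is compact, it is a Baire space. Writing
\begin{equation*}
K = \bigcup_{i=1}^{\infty} \bigl(K \cap (g(K) + c_i)\bigr)
\end{equation*}
as a countable union of closed subsets of $K$, at least one of the sets $K \cap (g(K)+c_i)$ must have non-empty interior in $K$. Because every non-empty relatively open subset of $K$ contains a cylinder (as noted in Section~\ref{digit restriction definition}), there exist an index $i$ and a word $J \in \Lambda^*$ with $\phi_J(K) \subseteq g(K) + c_i$.

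Next I would convert this cylinder containment into the form required by Theorem~\ref{LCT}. The cylinder map $\phi_J$ is a similarity with contraction ratio $m^{-|J|}$, so $\phi_J(K) = m^{-|J|}K + s$ for some $s \in \RR$, while $g(K) + c_i = \alpha K + (t+c_i)$. Rearranging $\phi_J(K) \subseteq g(K) + c_i$ yields
\begin{equation*}
K \subseteq m^{|J|}\alpha \cdot K + c
\end{equation*}
for a suitable constant $c \in \RR$, and applying the inverse of $x \mapsto m^{|J|}\alpha x + c$ produces the similarity $h(x) = (x-c)/(m^{|J|}\alpha)$, satisfying $h(K) \subseteq K$ and having contraction ratio $1/(m^{|J|}|\alpha|)$.

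Finally, Theorem~\ref{LCT} applied to $h$ (the hypothesis $0 < \dim K < 1$ is carried over from the statement) gives $\log\bigl(1/(m^{|J|}|\alpha|)\bigr)/\log m \in \QQ$. Since $|J| \in \ZZ$, this is equivalent to $\log|\alpha|/\log m \in \QQ$, which is the desired conclusion. The only conceptually nontrivial step is the Baire-category passage from a countable cover of $K$ to a single cylinder contained in one translate $g(K)+c_i$; once that reduction is in hand the remainder is straightforward algebraic bookkeeping feeding into the Feng--Wang theorem.
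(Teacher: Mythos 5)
Your proposal is correct and follows essentially the same route as the paper's proof: Baire category to locate a translate $g(K)+c_i$ with non-empty interior in $K$, a cylinder $\phi_J(K)$ inside it, and then composing with the cylinder map (equivalently, inverting the combined affine map) to produce a self-embedding of $K$ with ratio $m^{-|J|}/|\alpha|$ to which Theorem \ref{LCT} applies. The algebraic bookkeeping is handled correctly, so there is nothing to add.
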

\begin{proof} 
By Baire's Theorem  there exists some $c\in \mathbb{R}$ such that $g(K)+c$ has non empty interior in $K$. Therefore, it contains a cylinder set $\phi_{i_1} \circ ... \circ \phi_{i_k} (K) \subseteq g(K)+c$. Denote $\phi_{i_1} \circ ... \circ \phi_{i_k} (x) = s (x) = \frac{1}{m^k} \cdot x + r, r\in \mathbb{R}$. Since $s (K) \subseteq g(K)$ we have $ g^{-1} \circ s (K) \subseteq K$. Then the map $h = g^{-1} \circ s$ contracts by $ \frac{\beta^{-1} }{m^k}$ and takes $K$ into itself, so by Theorem \ref{LCT},
\begin{equation*}
\frac{\log (| \beta^{-1}/m^k|)}{\log m} \in \mathbb{Q},
\end{equation*} 
giving $\log |\beta|/\log m \in \mathbb{Q}$, as required. \end{proof}

\subsection{Principal and non-principal projections of products}\label{sub:projections}

We turn to linear images. A principal linear functional $\mathbb{R}^{2}\rightarrow\mathbb{R}$ is a linear functional whose kernel is one of the axes. The image
of a set $X\subseteq\RR^2$ under a non-principal functional will be called a non-principal
linear image of $X$.

We say that a matrix is anti-diagonal if the only non-zero entries are on the minor diagonal (for $2\times 2$ matrices this just means the diagonal entries are $0$).

For any product set $A\times B$ let $P_2, P_1$ denote the coordinate projections, $P_2 (x,y) = y, P_1 (x,y) =x$.

\begin{Lemma}
Let $A$ be a similarity. If $P_{1}\circ A$ or $P_{2}\circ A$ are
principal functionals, then $A$ is either diagonal or anti-diagonal,
and in this case $A$ is anti-diagonal if and only if $P_{1}\circ A$ is proportional to $P_2$ and $P_{2}\circ A$ is proportional to $P_1$.\end{Lemma}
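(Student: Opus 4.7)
The plan is to translate the hypothesis into coordinates and then let the orthogonality of the linear part of $A$ do the work. Write the linear part of the similarity $A$ as the matrix $M = \alpha O = \begin{pmatrix} a & b \\ c & d \end{pmatrix}$, where $\alpha > 0$ and $O$ is orthogonal. Then $P_1 \circ A$ is (up to an additive constant) the linear functional $(x,y) \mapsto ax + by$ and $P_2 \circ A$ is $(x,y) \mapsto cx + dy$. By definition, a principal functional is one proportional to $P_1$ or to $P_2$, i.e.\ one in which exactly one of the two coefficients vanishes; the other coefficient must remain nonzero because $M$ is invertible (no full row of $M$ can be zero).

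The key step is to exploit that the rows of $M = \alpha O$ are orthogonal vectors of equal nonzero length, so the relation $ac + bd = 0$ holds. Suppose first that $P_1 \circ A$ is a principal functional. If $b=0$, then $a \neq 0$ and $ac + bd = ac = 0$ forces $c=0$, so $M$ is diagonal. If instead $a = 0$, then $b \neq 0$ and $bd = 0$ gives $d = 0$, so $M$ is anti-diagonal. The case when $P_2 \circ A$ is principal is handled by the symmetric argument starting from the vanishing of $c$ or $d$ and using the same orthogonality relation, again producing a diagonal or anti-diagonal $M$.

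For the ``if and only if'' clause, read off the coordinate description directly: $M$ is anti-diagonal precisely when $a = d = 0$ (and $b, c \neq 0$ by invertibility), which is exactly the statement that $P_1 \circ A(x,y) = by$ is proportional to $P_2$ and $P_2 \circ A(x,y) = cx$ is proportional to $P_1$. The converse direction is automatic from the coefficient expressions.

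I do not foresee a genuine obstacle here; once the hypothesis is rephrased as the vanishing of a single entry of $M$, the orthogonality relation $ac + bd = 0$ together with equal row lengths determines the rest of the matrix. The only thing worth being careful about is not losing the ``nonzero'' half of the principal-functional hypothesis, since that is what prevents degenerate cases (such as $M$ having a zero row) from sneaking in.
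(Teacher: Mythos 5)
Your proof is correct: the orthogonality of the rows of $\alpha O$ together with their equal nonzero length is exactly what forces the remaining entries to vanish, and the equivalence for the anti-diagonal case is read off directly. The paper dismisses this lemma as ``Elementary'' without giving an argument, and yours is precisely the intended elementary computation.
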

\begin{proof}
Elementary.
\end{proof}

The following result strengthens Marstrand's projection theorem to products of deleted-digit sets, asserting that \emph{every} non-principal linear image of them has dimension which is ``as large as it can possibly be'' (Marstrand's theorem only gives this for a.e. linear image).

\begin{theorem}[Peres-Shmerkin \cite{peres2009resonance}, Hochman-Shmerkin \cite{hochman2009local}] \label{Hochman - Shmerkin}
  Let $K_1 = D( \Lambda_1 ,m)$ and $K_2 = D(\Lambda_2 ,n)$ with $m,n$ multiplicatively independent. Then for any non-principal functional L,
\begin{equation}
  \dim L(K_1\times K_2) = \min (1 ,  \dim K_1 + \dim K_2).
\end{equation}
Furthermore, the same holds if $K_1\times K_2$ is replaced by $K'_1\times K'_2$, where $K'_i$ is a cylinder of $K_i$.
\end{theorem}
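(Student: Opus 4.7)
A non-principal functional has the form $L(x,y) = ax + by$ with $a, b \neq 0$. Since rescaling by $1/|a|$ preserves Hausdorff dimension, $\dim L(K_1 \times K_2) = \dim(K_1 + tK_2)$ for $t = b/a \neq 0$. The cylinder version reduces to this case: writing $\phi_I(K_1) = m^{-|I|}K_1 + c_1$ and $\phi_J(K_2) = n^{-|J|}K_2 + c_2$, one gets $L(\phi_I(K_1) \times \phi_J(K_2)) = am^{-|I|}K_1 + bn^{-|J|}K_2 + (ac_1+bc_2)$, which after scaling by $m^{|I|}/a$ is a translate of $K_1 + t'K_2$ with $t' = t\,m^{|I|}/n^{|J|} \neq 0$. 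So the task is to show $F(t) := \dim(K_1 + tK_2) = \min(1, \dim K_1 + \dim K_2)$ for every $t \neq 0$.

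\textbf{Self-similar scale invariance and Marstrand.} The trivial upper bound $F(t) \leq \min(1, \dim K_1 + \dim K_2)$ is immediate, and Marstrand's projection theorem gives equality for Lebesgue-almost every $t$. Next, exploit the self-similarity of the factors: since $K_1 \supseteq m^{-1}(K_1 + j)$ for each $j \in \Lambda_1$, one obtains $K_1 + tK_2 \supseteq m^{-1}(K_1 + (mt)K_2) + m^{-1}j$, so $F(t) \geq F(mt)$. Symmetrically $F(t) \geq F(t/n)$, and iterating yields $F(t) \geq F(t\,m^a/n^b)$ for all $a,b \geq 0$. Multiplicative independence of $m,n$ makes the semigroup $\{m^a/n^b : a,b \geq 0\}$ dense in $(0,\infty)$: its logarithms $\{a\log m - b\log n\}$ are dense in $\mathbb{R}$ by an elementary equidistribution argument using $\log m/\log n \notin \mathbb{Q}$.

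\textbf{The main difficulty and its resolution.} A naive conclusion would select $s = m^a/n^b$ with $ts$ Marstrand-generic to deduce $F(t) \geq F(ts) = \min(1, \dim K_1 + \dim K_2)$. This fails: the orbit $\{t \cdot m^a/n^b\}$ is only countable, whereas the Marstrand exceptional set is merely of Lebesgue measure zero, so the orbit might lie entirely in the exceptional set. Moreover, Hausdorff dimension is neither upper nor lower semi-continuous under Hausdorff convergence, so no compactness/limit argument on the level of sets can close the gap. This is the main obstacle, and it is precisely what the cited references resolve. The Peres-Shmerkin/Hochman-Shmerkin strategy is to pass from sets to natural Bernoulli measures $\mu_i$ on $K_i$ (with $\dim \mu_i = \dim K_i$) and argue via the CP-distribution and local entropy averages formalism: the scenery process for $\mu_1$ is $\times m$-ergodic, that of $\mu_2$ is $\times n$-ergodic, and multiplicative independence of $m,n$ rules out any non-trivial joining of the two CP-systems. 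The local entropy averages theorem then yields $\dim \pi_t(\mu_1 \times \mu_2) = \min(1, \dim \mu_1 + \dim \mu_2)$ for \emph{every} $t \neq 0$, and the set-theoretic statement follows since the natural measures attain the dimension of their supports. The substantive content of the theorem is precisely this passage from ``almost every $t$'' to ``every $t \neq 0$'', which cannot be carried out by elementary means.
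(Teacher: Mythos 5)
Your treatment matches the paper's: the main equality is not proved in the paper either, but cited from Peres--Shmerkin and Hochman--Shmerkin, and the only argument the paper actually supplies is the cylinder reduction, which you carry out in an equivalent way (you normalize to a sumset $K_1 + t'K_2$ with $t'\neq 0$, while the paper just observes that $L\,\diag(a_1,a_2)$ is again non-principal). Your explanation of why the elementary scaling-plus-Marstrand argument cannot be closed, and your sketch of the measure-level CP-distribution/local-entropy-averages mechanism from the cited works (with the set statement following since the natural measures attain the dimension of the $K_i$), is a fair attribution of the deep step, so nothing needs fixing.
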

To derive the last statement from the first, note that by self-similarity,  $K'_i=a_iK_i + t_i$ for suitable $a_i,t_i\in\mathbb{R}$. By linearity, $L(K'_1\times K'_2)=LA(K_1\times K_2)+L(t_1,t_2)$ where $A=\diag(a_1,a_2)$, and since $A$ is diagonal, if $L$ is non-principal, so is $LA$.

\subsection{Bedford-McMullen carpets} \label{Bedford - McMullen carpets}
The main objects we will be working with are Bedford-McMullen carpets. Let $m\neq n$ be integers greater than one. We shall always assume $m>n$. Let 
\begin{equation*}
\Gamma \subseteq \lbrace 0,...,m-1 \rbrace \times \lbrace 0,...,n-1 \rbrace \;=\;[m]\times[n],
\end{equation*}
and define
\begin{equation*}
  \label{eq:BM-carpet-via-digit-expansions}
F = \lbrace (\sum_{k=1} ^\infty \frac{x_k}{m^k}, \sum_{k=1} ^\infty \frac{y_k}{n^k}) : (x_k,y_k) \in \Gamma \rbrace.
\end{equation*}
 $F$ is then called a Bedford-McMullen carpet with defining exponents $m,n$. Note that $F$ is a self affine set generated by an IFS consisting of maps whose linear parts are diagonal matrices. Specifically, $F$ is the attractor of $\Phi = \lbrace \phi_{(i,j)} \rbrace_{(i,j) \in \Gamma}$ where
\begin{equation} \label{Genrating IFS for F}
\phi_{(i,j)} (x,y) = (\frac{x+i}{m}, \frac{y+j}{n}) = \begin{pmatrix}
\frac{1}{m} & 0 \\
0 & \frac{1}{n}
\end{pmatrix}
\cdot (x,y) + (\frac{i}{m},\frac{j}{n}).
\end{equation}

Set $\Omega_{m,n} =\Omega_m \times \Omega_n \cong  ([m] \times [n])^\mathbb{N}$ with the product topology. The shift on $\Omega_{m,n}$ is $\sigma_{m,n} = \sigma_m \times \sigma_n$. Also define the projection  $\pi_{m,n}=\pi_m \times \pi_n:\Omega_{m,n} \rightarrow \mathbb{R}$. Then  $\widetilde{F} = \Gamma^\mathbb{N} \subseteq \Omega_{m,n}$ is a shift invariant subset satisfying $\pi_{m,n} (\widetilde{F}) = F$. As before, this may not be an injection, even though it is surjective, and $\widetilde{F} \subseteq \pi_{m,n} ^{-1} (F)$, but the two sets might not be equal.

For $l \in \mathbb{N}$, we write $\Gamma^l$ to denote words of length $l$ in the alphabet $\Gamma$, which we identify with pairs of words of length $l$ over $[m]$ and $[n]$ respectively. For a fixed $(b_1,...,b_l)=b \in [n]^l$ we write
\begin{equation*}
\Gamma _{b} = \lbrace a \in [m]^l :  (a , b) \in \Gamma^l \rbrace,
\end{equation*}
which  could be empty. Similarly, for $(a_1,...,a_l)=a \in [m]^l$ we write
\begin{equation*}
\Gamma ^{a} = \lbrace b \in [n]^l :  (a , b) \in \Gamma^l \rbrace.
\end{equation*}

For $y \in P_2 (F)$, define 
\begin{equation*}
F_y = \lbrace x \in \mathbb{R} : (x,y) \in F \rbrace.
\end{equation*}
We shall refer to $F_y$ as the horizontal slice of $F$ at height $y$, noting that $F_y \times \lbrace y \rbrace = F\cap (\mathbb{R} \times \lbrace y \rbrace)$. In the symbolic context, for an infinite sequence $\eta \in \Omega_n$ we define the symbolic slice corresponding to $\eta$ by
\begin{equation*}
\widetilde{F} _{\eta} = \lbrace  \omega \in \Omega_m : ( \omega , \eta ) \in \widetilde{F} \rbrace \;=\;\prod_{i=1}^\infty \Gamma_{\eta_i}.
\end{equation*}

Similarly, for $x \in P_1 (F)$ we define the vertical slice over $x$ to be
\begin{equation*}
F^x = \lbrace y \in \mathbb{R} : (x,y) \in F \rbrace.
\end{equation*}
and for an infinite sequence $\omega \in \Omega_m$ the symbolic slice corresponding to $\omega$ is 
\begin{equation*}
\widetilde{F} ^{\omega} = \lbrace  \eta \in \Omega_n : ( \omega , \eta ) \in \widetilde{F} \rbrace \;=\;\prod_{i=1}^\infty \Gamma^{\omega_i}.
\end{equation*}

Note that
\begin{equation*}
\pi_{m} ( \widetilde{F} _\eta ) \subseteq F_{\pi_m (\eta) },
\end{equation*}
but the two sets might not be equal if $\pi_n (\eta) \in [0,1]$ admits another base-$n$ expansion in $\widetilde{F}$. But we always have  that \[
  F_y = \bigcup_{\eta\in\pi_m^{-1}(y)} \pi_m(\widetilde{F}_\eta )
\]
This is a union of at most two sets (again, if one pre-image of $y$ is not in $\widetilde{F}$, the corresponding term in the union is empty). Given $\eta$, we can describe the set $\pi_m(\widetilde{F}_\eta)$ using a recursive Moran-type construction: it is the intersection $\bigcap_{k=1} ^\infty (\cup E^k)$, where $E^k$ are finite collections of closed intervals:   $E^1$ is the collection $[\frac{i}{m}, \frac{i+1}{m}], i\in \Gamma _{y_1}=\Gamma_{y_1}$, and  $E^{k}$ is obtained from $E^{k-1}$ by subdividing each interval in $E^{k-1}$ into $n$ equal closed sub-interval meeting only at endpoints, keeping those which correspond to digits in $\Gamma_{y_k}$ (with the intervals enumerated as usual from left to right), and discarding the rest.  

Consequently, Bedford-McMullen carpets admit self similar sets defined by digit restriction as horizontal slices. Indeed,  for any $j\in [n]$ such that $|\Gamma_j| \neq 0$,  for  $y=\sum_{k=1} ^\infty \frac{j}{n^k}$ the slice $F_y$ is a self similar, and in fact is equal to $D(\Gamma_j,m)$. See also Lemma \ref{lem:self-similar-slices} below.

We also have an elementary expression for the Hausdorff dimension of projections of symbolic slices: given $\eta\in \Omega_n$, 
\begin{equation}
\label{eq:slice-dimension}
   \dim \pi_m(\widetilde{F}_\eta) = \liminf_{l\to\infty} \frac{\sum_{i=1}^l \log|\Gamma_{\eta_i}|}{l\log m}
\end{equation}
(this is standard, but we will only need the even more trivial upper bound, which is obtained by using the coverings given by the Set $E^k$ above). Similarly, for $\omega\in\widetilde{F}$, \begin{equation}
\label{eq:slice-dimension vertical}
   \dim \pi_n(\widetilde{F}^\omega) = \liminf_{l\to\infty} \frac{\sum_{i=1}^l \log|\Gamma^{\omega_i}|}{l\log m}
\end{equation}

With regard to projections, note that $P_1F = D(P_1\Gamma,m)$ and $P_2F=D(P_2\Gamma,n)$. These identities can be verified directly, e.g. using \eqref{eq:BM-carpet-via-digit-expansions}.

To illustrate our discussion, consider
\begin{equation*}
F = \lbrace (\sum_{k=1} ^\infty \frac{x_k}{3^k}, \sum_{k=1} ^\infty \frac{y_k}{2^k}) : (x_k,y_k) \in \lbrace (0,0),(1,1),(2,0) \rbrace \rbrace.
\end{equation*}
That is, $F$ has defining exponents $3,2$ and $\Gamma = \lbrace (0,0),(1,1),(2,0) \rbrace \subseteq [3]\times [2]$. Then $F_0$ is just the Cantor thirds set $C=D ( \lbrace 0,2 \rbrace,3)$. For an example of a horizontal slice that is the union of two Moran sets, consider
\begin{equation*}
F = \lbrace (\sum_{k=1} ^\infty \frac{x_k}{3^k}, \sum_{k=1} ^\infty \frac{y_k}{2^k}) : (x_k,y_k) \in \lbrace (0,0), (2,0), (0,1), (1,1), (2,1) \rbrace \rbrace.
\end{equation*}
Then
\begin{equation*}
\frac{1}{2} = \frac{1}{2} + \sum_{k=2} ^\infty \frac{0}{2^k} = \frac{0}{2}+ \sum_{k=2} ^\infty \frac{1}{2^k}
\end{equation*}
so $F_{\frac{1}{2}}$ is the union of $C = \sum_{i=0} ^2 \frac{1}{3}\cdot C + \frac{i}{3}$, and $ [0,\frac{1}{3}] \cup [\frac{2}{3},1]$.

\subsection{Glossary of main notations}

We summarize our main notation in the table below. Some of it will be defined in later sections. 

\begin{tabular}{|c|l|}

\hline
  
Notation & Interpretation \\ 

\hline


$g,h$ & Invertible Affine maps $g,h:\mathbb{R}^2 \rightarrow \mathbb{R}^2$, usually similarities \\

$\dim$ & Hausdorff dimension \\

$Q$ & $[-1,1]^2$ \\

$P_1, P_2$ & The  projections $\mathbb{R}^2 \rightarrow \mathbb{R}, P_2 (x,y)=y, P_1 (x,y)=x$ \\

$[m]$, $m\in \mathbb{N}$ & The set $\lbrace 0,...,m-1\rbrace$ \\ 

$D( \Lambda ,m)$ & Deleted digit set with digits in $\Lambda  \subseteq [m]$: $\{\sum_{k=1}^\infty \xi_k/m^k \;:\; \xi_k\in\Lambda\}$ \\

$\Gamma$ &  Set of digits pairs defining $F$, $\Gamma \subseteq [m]\times [n]$\\

$F$ & Bedford-McMullen carpet defined by $\Gamma$\\

$f = (f_1,f_2)$ & Element of a Bedford-McMullen carpet $F$. \\

$\Omega_m, \Omega_{m,n}$ & $\Omega_m = [m]^\mathbb{N}$, $\Omega_{m,n} = \Omega_m \times \Omega_n$ \\

$\sigma_m, \sigma_{m,n}$ & Shift operators on  $Omega_m,\Omega{m,n}$ resp.\\


$\pi_m$ &   The "projection" $\pi_m:\Omega_m \rightarrow [0,1], \pi_m (\omega) = \sum_{k=1} ^\infty \frac{\omega_k}{m^k}$ \\





$\widetilde{F}$ & Symbolic version of $F$: $\widetilde{F}=\Gamma^\mathbb{N} \subseteq \Omega_{m,n}$, \\



$F_y$ & Slice at height $y$: $\lbrace x\in \mathbb{R}: (x,y)\in F \rbrace$\\

$F^x$ & Slice above  $x$: $\lbrace y\in \mathbb{R}: (x,y)\in F \rbrace$\\

$\Gamma _{b}$, $b \in [n]^l$ & $\lbrace a \in [m]^l : (a , b)\in \Gamma^l \rbrace $\\

$\Gamma ^{a}$, $a \in [m]^l$ & $\lbrace b \in [n]^l : (a , b)\in \Gamma^l \rbrace $\\

$\widetilde{F} _\eta$, $\eta \in \Omega_n$ & Symbolic slice at ``height'' $\eta$: $\lbrace \omega \in \Omega_m: (\omega,\eta) \in \widetilde{F} \rbrace$ \\

$\widetilde{F} ^\omega$, $\omega \in \Omega_m$ & Symbolic slice ``above'' $\omega$: $\lbrace \eta \in \Omega_m: (\omega,\eta) \in \widetilde{F} \rbrace$ \\

$\overline{\eta}$ & For $\eta\in\Omega_n$, the other expansion of $\pi_n(\eta)$, or $\eta$ if none exists.\\

$\cmpct(Q)$ & The space of closed non-empty subsets of $Q$, with Hausdorff metric\\

$T(F,f,m,l)$ & $[m^l (F-f)]\cap Q$ \\

$T(F,f,m)$ &  Accumulation points in $\cmpct(Q)$ of $T(F,f,m,l)$, $l\rightarrow \infty$ \\

$S(\eta),\eta\in \Omega_n $ & Accumulation points of $\lbrace (\sigma_n ^l \eta, l\log_m n)\rbrace_{l \in \mathbb{N}} \in \Omega_n \times \mathbb{T}$ \\

$S'(\eta) $ & Accumulation points of $\lbrace \sigma_n ^l \eta \rbrace_{l \in \mathbb{N}}$ in $\Omega_n$ \\


\hline
\end{tabular} 

\bigskip
Principal and non-principal projections and anti-symmetric matrices were defined in Section \ref{sub:projections}. For the terms $( \eta, s)$-set and $(\eta,s)$-multiset see Sections \ref{subsec:tangent-sets} and \ref{New notation}.

\section{Affine embeddings of deleted digits sets}\label{Section - embedding deleted digit set}

Theorem \ref{LCT} tells us that when a base-$n$ deleted digit set is embedded in itself by a similarity, the contraction ratio is a rational power of $n$. In this section we present a complementary result on the translation part.

\begin{Proposition}\label{Lemma - translation of self embedding}
Let $K = D (\Lambda,n)$ with  $2\leq |\Lambda| <n$. Let $l\in \mathbb{N}$ and suppose $\frac{1}{n^l}K + t\subseteq K$. Then $t=u/n^{l+1}$ for some integer $0\leq u<n^{l+1}$. 
\end{Proposition}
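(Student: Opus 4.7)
My plan is induction on $l\ge 0$. The base case $l=0$ reduces to $K+t\subseteq K\subseteq[0,1]$; applying this to the extreme points of $K$ gives $\min K+t\ge\min K$ and $\max K+t\le\max K$, forcing $t=0=0/n$.

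For the inductive step with $l\ge 1$, the key reduction is to show that $\frac{1}{n^l}K+t$ is contained in a single level-1 cylinder $\phi_\lambda(K)=\frac{1}{n}K+\lambda/n$ for some $\lambda\in\Lambda$. Granting this, rescaling by $n$ yields $\frac{1}{n^{l-1}}K+(nt-\lambda)\subseteq K$, so by the inductive hypothesis $nt-\lambda=u'/n^l$ for an integer $0\le u'<n^l$. Then $t=(u'+\lambda n^l)/n^{l+1}=u/n^{l+1}$ with $u:=u'+\lambda n^l$ lying in $[0,n^{l+1})\cap\mathbb{Z}$, which is the desired form.

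For the single-cylinder claim I plan to compare the diameter of $\frac{1}{n^l}K+t$, namely $(\max K-\min K)/n^l$, with the separation between distinct level-1 cylinders $\phi_\lambda(K),\phi_{\lambda'}(K)$ with $\lambda<\lambda'$ in $\Lambda$, which is $(\lambda'-\lambda)/n-(\max K-\min K)/n$. A direct computation shows that if the cluster meets two such cylinders then $\lambda'=\lambda+1$ and $(\max K-\min K)(1+n^{-(l-1)})\ge 1$; for $l\ge 2$ this already forces $\max K-\min K=1$, i.e., $\{0,n-1\}\subseteq\Lambda$, in which case the two cylinders touch at the single point $(\lambda+1)/n$.

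The principal obstacle will be handling this remaining straddling configuration (and the slightly more permissive case at $l=1$). My plan is to look one scale deeper: decompose the cluster into its own $|\Lambda|$ sub-pieces $\frac{1}{n^{l+1}}K+x_1/n^{l+1}+t$, $x_1\in\Lambda$. Each sub-piece has diameter at most $1/n^{l+1}$ and therefore can straddle the touching point only by containing it, so the remaining sub-pieces sit strictly inside one of the two level-1 cylinders. Each such inclusion rescales to a new relation $\frac{1}{n^l}K+a\subseteq K$ with an explicit $a$ depending on $t$, $x_1$, and either $\lambda$ or $\lambda+1$. Combining the constraints coming from the extremal sub-pieces (the leftmost in $\phi_\lambda(K)$, the rightmost in $\phi_{\lambda+1}(K)$), and using $|\Lambda|<n$ to preclude non-trivial translational symmetry of $K$, I expect to force the fractional part of $n^{l+1}t$ to vanish and complete the inductive step. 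Working out this straddling step cleanly is where the argument becomes most delicate.
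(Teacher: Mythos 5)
Your reduction scheme is sound as far as it goes: the base case, the rescaling step once $\frac{1}{n^l}K+t$ lies in a single first-level cylinder, and the diameter-versus-gap estimate showing that otherwise $\max K-\min K=1$ (so $0,n-1\in\Lambda$) and the copy straddles the common point $(\lambda+1)/n$ of two adjacent cylinders, are all essentially correct (with some extra care needed at $l=1$, where adjacent cylinders need not touch and the copy could a priori bridge a positive gap using one of its own internal gaps, and where meetings consisting of a single endpoint should be set aside). But the straddling configuration is not a residual technicality: it is the heart of the proposition, the only place where the hypothesis $|\Lambda|<n$ can enter, and your proposal does not prove it --- it only records the intention to handle it (``I expect to force the fractional part of $n^{l+1}t$ to vanish'').

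Two concrete problems with the outlined endgame. First, the claim that a sub-piece of diameter $1/n^{l+1}$ can straddle the touching point only by containing it is false: a sub-piece is totally disconnected and can have points on both sides of $(\lambda+1)/n$ without containing that point. Second, and more seriously, rescaling an inclusion $\frac{1}{n^{l+1}}K+\frac{x_1}{n^{l+1}}+t\subseteq\phi_\mu(K)$ produces a relation $\frac{1}{n^l}K+a\subseteq K$ with the \emph{same} exponent $l$, so the inductive hypothesis (which concerns $l-1$) does not apply to it, and invoking the proposition itself would be circular. Combining two such same-level relations only says that $K$ contains two affine copies of itself at a known relative translation; by itself this precludes nothing, and deducing from it that $n^{l+1}t$ is an integer is precisely the nontrivial content you have deferred. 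The paper closes exactly this point by a digit-and-carry argument: taking an expansion $\eta$ of $t$, one shows that a straddle would force both $\eta_{l+1}+\Lambda\subseteq\Lambda$ and $\eta_{l+1}+\Lambda+1\subseteq\Lambda$ modulo $n$, hence $\Lambda=[n]$, contradicting $|\Lambda|<n$. Some argument at the level of digits and carries of this kind appears unavoidable, and your sketch does not yet contain it; as it stands the proposal has a genuine gap at its most delicate step.
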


\begin{proof}  

  We first claim that we can assume that $0\in\Lambda$. For suppose we knew the proposition held in that case and suppose that $d=\min\Lambda\neq 0$. Set $z=\pi_n(d,d,d,\ldots)$ and  $K'=K-z$ and note that $K'=D(\Lambda-d,n)$ and $0\in\Lambda-d$. Thus, if $\frac{1}{n^l}K+t\subseteq K$ then \[
    \frac{1}{n^l}K'+(t -  z + \frac{1}{n^l}z) \subseteq  K' 
  \]
  We conclude that $t-z+z/n^l$ is $n$-adic rational with denominator $n^l$.  Observing that $z-z/n^l$ is $n$-adic rational with this denominator and that $t-z+z/n^l\geq 0$ (since $0\in K'$), and we conclude that $t$ has the same form, as desired.
  
We assume from now on that $0\in\Lambda$. Next, a simple remark: Let $x,y \in [0,1]$ and let $ \xi, \eta \in \Omega_n$ be base-$n$ expansions of $x,y$ respectively. If $x+y\leq 1$ then we can compute $\tau \in \Omega_n$ such that $\pi_n (\tau) = x+y$ using the usual addition with carry algorithm:
\begin{equation*}
x+y = \sum_{k=1} ^\infty \frac{\xi_k}{n^k}+ \sum_{k=1} ^\infty \frac{\eta_k}{n^k}  = \sum_{k=1} ^\infty \frac{\tau_k}{n^k},
\end{equation*}
where \[
   \tau_k = \xi_k+\eta_k+1_{\{\xi_{k+1}+\eta_{k+1}\geq n\}}\bmod 1
   \]
and $1_{\{\xi_{k+1}+\eta_{k+1}\geq n\}}$ is $1$ if a carry occurred in the digit $k+1$ and zero otherwise. 

Suppose first the $t$ admits a unique expansion $\eta$ in base $n$, that is, $|\pi_n ^{-1} (t)| =1$.  Let $\eta$ denote this expansion. We will derive from this a contradiction.

Our assumption that $0\in \Lambda$ implies that $0\in K$ hence $t=\frac{0}{n^l}+t\in K$ and it follows by the uniqueness of the above expansion that $\eta_k \in \Lambda$ for all $k\in \mathbb{N}$. 

We claim that  $\eta_{l+p} + \Lambda \mod n \subseteq \Lambda$  for every $p\in \mathbb{N}$. Otherwise,  we may find a $p$ and $d\in \Lambda$ such that $j=\eta_{l+p} + d \mod n  \notin \Lambda$. Take $x=d/n^p$, and let $\xi$ denote the expansion of $x$ with $\xi_{p} = d$ and $\xi_{k} =0$ otherwise. Then $\xi\in\Lambda^\NN$ so  $x=\pi_n(\xi)\in K$, so also $y=x/n^l+t\in K$, and $y$ admits a unique base-$n$ expansion $\tau$, since $t$ does and $x$ is $n$-adic rational. This expansion is in $\Lambda^n$ since $y\in K$, and computing $\tau$ (noting that no carries occur in digits greater than $p$) we have
\begin{equation*}
\tau _{l+p} = d+ \eta_{l+p} \mod n = j \notin \Lambda.
\end{equation*}
which is a contradiction.

Next, we claim that $\eta_{l+2} =0$. Suppose the contrary is true; we will show that $\eta_{l+1}+\Lambda+1\subseteq\Lambda\bmod 1$. We have already seen above that  $\eta_{l+2} +\Lambda \mod n \subseteq \Lambda$, and  it follows from this and the assumption $\eta _{l+2} \neq 0$ that we can find a $d\in \Lambda$ such that $\eta_{l+2} + d \geq n$; for example take  $d=k\eta_{l+2}$, where $k$ is the greatest integer such that $k\eta_{l+2}<n$. Now fix $j\in\Lambda$ and let $x=j/n+d/n^2$, which has the expansion $\xi=(j,d,0,0,\ldots)\in\Lambda^\NN$, so belongs to $K$. Then  $y=x/n^l+t\in K$ has a unique expansion $\tau$ which we can compute by addition with carry using $\eta$ and $\xi$, and we find that a carry is generated in the $l+2$-nd digit, hence the $l+1$-st digit of $\tau$ is
\begin{equation*}
\tau_{l+1} = j+ \eta_{l+1}+1 \mod n
\end{equation*}
Since $y=\pi_n(\tau)$ belongs to $K$ and $\tau$ is its unique expansion we have $j+\eta_{l+1}+1\in\Lambda\bmod 1$, as claimed. Thus, we now have $\eta_{l+1}+\Lambda \subseteq\Lambda\bmod 1$ and  $\eta_{l+1}+\Lambda + 1 \subseteq\Lambda\bmod 1$. This means that $\Lambda$ contains the additive sub-semigroup of $\ZZ/n\ZZ$ generated by $\eta_{l+1}$ and $\eta_{l+1}+1\bmod 1$, which implies that $\Lambda=[n]$, contrary to our assumption  that $|\Lambda|<n$. We conclude that $\eta _{l+2} =0$.

Finally, apply the argument above inductively to show for each integer $p>2$ that $\eta _{l+p } \neq 0$, and we conclude that $\eta_k =0$ for all $k>l+1$. This contradicts the assumption that $t$ has a unique expansion.

Now, suppose that $t$ admits two expansions in base $n$. They may not both be in $\Lambda^\NN$, but since $t\in K$ at least one of them is. Denote it $\eta$. We cannot apply verbatim the argument from the case when $t$ had unique expansion, because now, if we would choose $\xi\in\Lambda^\NN$ with  $x =\pi_n(\xi)\in K$ an $n$-adic rational, then $y=x/n^l+t$ will be $n$-adic rational too, and it may be  that the expansion we get from performing addition with carry on $\eta$ and $\xi$ is not the expansion of $y$ that is in $\Lambda^\NN$. However, the proof works with the following modification: instead of setting $\xi$ to have a tail of $0$'s, let $0\neq d\in\Lambda$ (which exists by our assumption $|\Lambda|\geq 2$), and let $\xi$ terminate with $0,d,0,d,0,d,\ldots$ instead. Then neither $x$ nor $x/n^l$ is $n$-adic rational, and since $t$ is, $y=x/n^l+t$ has a unique expansion. Also, though the tail may generate carries, the first $0$ in the tail sequence prevents carries from propogating to the digits we are interested in. Thus, the argument goes through.
\end{proof}

\begin{Corollary}\label{Cor:generalized-embedding-of-dd-sets}
  Let $K=D(\Lambda,n)$ be such that $0<\dim K<1$, let $g(x)=n^{-l}x+t$ for some $l\in\mathbb{N}$, and suppose that \[
  g(K)\subseteq \bigcup_{i=1}^\infty (n^{-l_i}(K+p_i))
\] for integers $l_i,p_i$. Then $t=u/n^k$ for some $k\in\mathbb{N}$ and $0\leq u<n^k$.
\end{Corollary}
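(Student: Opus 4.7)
The plan is to adapt the Baire-category/cylinder argument used in the proof of Corollary~\ref{Corollary from LCT}, and then invoke Proposition~\ref{Lemma - translation of self embedding} to pin down $t$.

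First, I would observe that $g(K)$ is a complete metric space (being closed in $\RR$), and that it is covered by the countable family of relatively closed subsets $E_i := g(K) \cap n^{-l_i}(K+p_i)$. Baire's theorem therefore furnishes an index $i_0$ for which $E_{i_0}$ has non-empty interior in $g(K)$. Since $g(K)$ is a similar copy of $K=D(\Lambda,n)$, every non-empty relatively open subset of it contains a cylinder $g(\phi_J(K))$, where $J=(j_1,\ldots,j_k)\in\Lambda^k$ and $\phi_J(x)=n^{-k}x+r/n^k$ with $r:=\sum_{i=1}^k j_i\,n^{k-i}$ an integer in $[0,n^k)$. Crucially, the length $k$ can be chosen as large as desired.

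Next, I would compute
\[
  g\circ\phi_J(K) \;=\; n^{-(l+k)}K+\frac{r}{n^{l+k}}+t,
\]
so the containment $g(\phi_J(K))\subseteq n^{-l_{i_0}}(K+p_{i_0})$, after scaling by $n^{l_{i_0}}$ and translating by $-p_{i_0}$, becomes
\[
  n^{-m}K+s\subseteq K, \qquad s:=\frac{r}{n^m}+n^{l_{i_0}}t-p_{i_0},
\]
where $m:=l+k-l_{i_0}$. Choosing $k$ large enough to guarantee $m\geq 1$, and using that $0<\dim K<1$ forces $2\leq |\Lambda|<n$, Proposition~\ref{Lemma - translation of self embedding} applies and yields $s=u'/n^{m+1}$ for some integer $0\leq u'<n^{m+1}$. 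Unwinding, $n^{l_{i_0}}t = s+p_{i_0}-r/n^m$ is an $n$-adic rational, and hence so is $t$, which can then be rewritten in the normalized form $t=u/n^k$ with $0\leq u<n^k$.

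The only delicate point is arranging that an entire cylinder of $g(K)$ be trapped inside a single member of the countable union; this is precisely the content of Baire's theorem in this setting, and it turns the a priori ``union-of-embeddings'' hypothesis into a genuine self-embedding statement to which Proposition~\ref{Lemma - translation of self embedding} can be applied verbatim. Everything else is a direct unfolding of the definitions.
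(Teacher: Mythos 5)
Your proof is correct and follows essentially the same route as the paper: Baire's theorem to trap a cylinder of $g(K)$ inside a single set $n^{-l_{i_0}}(K+p_{i_0})$, composing with the cylinder map to get a self-embedding of $K$, and then invoking Proposition \ref{Lemma - translation of self embedding} to force the translation (and hence $t$) to be $n$-adic rational. In fact you make explicit a point the paper leaves implicit, namely choosing the cylinder depth $k$ large enough that the composed map genuinely contracts by $n^{-m}$ with $m\geq 1$, as the Proposition requires.
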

\begin{proof}
  By Baire's theorem there is an open set in $g(K)$ that is contained in one of the sets $K'=n^{-l_i}K+p_i$. Thus there is a cylinder set $K''\subseteq K$ with $g(K'')\subseteq K'$. There exist $k,q$ such that $K''=n^{-k}(K+q)$. Thus $hK\subseteq K$, where $h=h_1^{-1}gh_2$ for $h_1(x)=n^{-l_i}(x+p_i)$ and $h_2(x)=n^{-k}(x+q)$. Then $h$ contracts by $n^{k-l-l_i}$ and translates by an amount which differs from $t$ by an $n$-adic rational. Applying Proposition \ref{Lemma - translation of self embedding}, this translation, and hence $t$, is $n$-adic rational.

\end{proof}

\section{$m$-adic tangent sets of McMullen carpets}\label{subsec:tangent-sets}

This section introduces the definition and basic results on $m$-adic tangent sets, which will play a role in the proof of our main theorem.

\subsection{The Hausdorff metric}
Denote
\begin{equation*}
Q=[-1,1]^2 .
\end{equation*}
and write $\cmpct(Q)$ for the set of non-empty closed subsets of $Q$. For $A,B\in\cmpct(Q)$ and $\epsilon >0$ define, using the Euclidean norm $|| \cdot ||$,
\begin{equation*}
A_\epsilon = \lbrace x \in Q : \exists a\in A, || x-a|| < \epsilon \rbrace.
\end{equation*}
The Hausdorff distance is defined by
\begin{equation*}
d_H (A,B) = \inf \lbrace \epsilon >0 : A \subseteq B_\epsilon, B \subseteq A_\epsilon \rbrace. 
\end{equation*}
This is a compact metric on  $\cmpct(Q)$ (see e.g. the appendix in \cite{bishop2013fractal}). We state without proof some of its basic properties:

\begin{Proposition} \label{Proposition - Hausdorff metric0} Let $X_n,X\in\cmpct(Q)$ and $X_n\to X$.
\begin{enumerate}
\item \label{H-limit-pointwise}The limit $X$ is given by
\begin{equation*}
X = \lbrace x\in Q : \exists x_{n_k} \in X_{n_k} \text{ such that } \lim x_{n_k} = x\rbrace.
\end{equation*}

\item \label{H-limit-of-union}Let $l\in\NN$ and suppose $X_n = \bigcup_{k=1} ^l X_n ^k$ with $X_n^k\in\cmpct(Q)$.  If for each $k$ we have $X_n ^k \rightarrow X^k$ as $n\rightarrow \infty$, then $X=\lim X_n= \bigcup_{k=1} ^l X^k$.

\item \label{H-limit-on-open-set} Let $Y\subseteq Q$ be closed with (possibly empty) interior $Y^{\circ}$, and suppose that $X_{i}\cap Y\rightarrow X'$. Then 
\[
X\cap Y^{\circ}\subseteq X'\subseteq X\cap Y
\]

\item \label{H-limit-of-image}Suppose $f:Q\to Q$ is continuous. Then $f(X_n),f(X)\in\cmpct(Q)$ and $f(X_n)\to f(X)$. 

\end{enumerate}

\end{Proposition}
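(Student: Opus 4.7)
The proof is almost entirely unpacking the definition of the Hausdorff metric and exploiting uniform continuity on the compact set $Q$. All four parts are standard, and the paper itself notes it states them without proof; the only ``obstacle'' is being careful with the distinction between $Y$ and $Y^\circ$ in part \ref{H-limit-on-open-set}.

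For part \ref{H-limit-pointwise}, the plan is to pick $\epsilon_n \to 0$ with $X \subseteq (X_n)_{\epsilon_n}$ and $X_n \subseteq X_{\epsilon_n}$. The first inclusion, applied to any $x \in X$, produces $x_n \in X_n$ with $\|x_n - x\| < \epsilon_n$, so $x_n \to x$. Conversely, if $x_{n_k} \in X_{n_k}$ and $x_{n_k} \to x$, the second inclusion places each $x_{n_k}$ within $\epsilon_{n_k}$ of some point of $X$, and since $X$ is closed in $Q$, the limit $x$ lies in $X$.

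For part \ref{H-limit-of-union}, I would choose a single $\epsilon_n \to 0$ that works simultaneously for all $k = 1,\ldots, l$ (possible because the union is finite), so that $X_n^k \subseteq (X^k)_{\epsilon_n}$ and $X^k \subseteq (X_n^k)_{\epsilon_n}$ hold for every $k$. Taking unions over $k$ and observing that $\bigcup_k (A^k)_\epsilon = (\bigcup_k A^k)_\epsilon$ gives $d_H(\bigcup_k X_n^k, \bigcup_k X^k) \le \epsilon_n$, so the union converges to $\bigcup_k X^k$. This step is where finiteness of the union is used, and no obstacle arises.

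Part \ref{H-limit-on-open-set} is the delicate one. For the right-hand inclusion $X' \subseteq X \cap Y$: any $x \in X'$ is a limit of $x_{n_k} \in X_{n_k} \cap Y$ by part \ref{H-limit-pointwise}, hence $x \in X$ (again by \ref{H-limit-pointwise}) and $x \in Y$ since $Y$ is closed. For the left-hand inclusion $X \cap Y^\circ \subseteq X'$: given $x \in X \cap Y^\circ$, use part \ref{H-limit-pointwise} to find $x_n \in X_n$ with $x_n \to x$; because $Y^\circ$ is open, $x_n \in Y$ for all large $n$, so $x_n \in X_n \cap Y$ and therefore $x \in X'$ by the characterization of Hausdorff limits, now applied to the sequence $X_n \cap Y$. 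The openness of $Y^\circ$ is essential here; without it we could lose points on $\partial Y$ that are approached from outside $Y$.

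Finally, for part \ref{H-limit-of-image}, the key input is that $f$ is uniformly continuous on the compact set $Q$. Given $\epsilon > 0$, choose $\delta > 0$ so that $\|a - b\| < \delta$ implies $\|f(a) - f(b)\| < \epsilon$. For all $n$ large enough, $X_n \subseteq X_\delta$ and $X \subseteq (X_n)_\delta$, which yields $f(X_n) \subseteq (f(X))_\epsilon$ and $f(X) \subseteq (f(X_n))_\epsilon$; hence $d_H(f(X_n), f(X)) \le \epsilon$. Continuity of $f$ together with compactness of each $X_n$ and $X$ ensures that the images are again in $\cmpct(Q)$, closing the argument.
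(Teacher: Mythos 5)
The paper states this proposition explicitly without proof, so there is no argument to compare against; your write-up is a correct and standard verification of all four parts. In particular you correctly handle the only real subtleties: that part \ref{H-limit-pointwise} requires producing a full sequence $x_n\to x$ for $x\in X$ (which you get from $X\subseteq (X_n)_{\epsilon_n}$) even though membership in the limit is characterized via subsequences, and that in part \ref{H-limit-on-open-set} the openness of $Y^\circ$ is what forces the approximating points into $Y$ eventually.
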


\subsection{$m$-adic tangent sets of Bedford-McMullen carpets}

Let $F\in\cmpct(Q)$, fix $f\in F$ and let $l\in \mathbb{N}$. Define the $m$-adic mini-set of $F$ at $f$ by
\begin{equation*}
T (F , f, m, l) =[ m^l ( F -f)]\cap Q \in \cmpct(Q).
\end{equation*}
(note that  $T(F,f,m,l)\neq\emptyset$ because it contains $0$). The set of $m$-adic tangent sets of $F$ is  defined  to be the set of all accumulation points of $T(F,f,m,l)$ in the Hausdorff metric as we take $l\rightarrow \infty$, that is,
\begin{equation*}
T(F,f,m) = \lbrace T \in \cmpct (Q) :   \exists \lbrace l_k \rbrace \subseteq \mathbb{N}, T = \lim_{k\to\infty} T(F,f,m,l_k) \rbrace .
\end{equation*}
This collection of sets captures information about the microscopic structure of$F$ as we ``zoom in'' to $f$.

The proof of Theorem \ref{Main Theorem} relies on a structure theorem for the tangent sets when $F$ if a Bedford-McMullen carpet with multiplicatively independent defining exponents $m>n$, and from now on we assume it is such a set. For $\eta\in\Omega_{n}$  and $s\in [0,1)$ we define an $(\eta,s)$-set to be a set of the form
\begin{equation} \label{Location}
\left(\begin{array}{cc}
1\\
 & n^s
\end{array}\right)\cdot\left(\pi_{m}(\widetilde{F}_{\eta})\times P_{2}(F)+z\right)
\end{equation}
which is contained in $[-2,2]^{2}$.  We say that a set
$E\subseteq Q$ is a $(\eta,s)$-multiset if there are finitely many
$(\eta,s)$-sets $E_{1},\ldots,E_{N}$ 
such that 
\begin{equation}\label{eq:2}
 \bigcup_{i=1}^{N}E_{i}\cap(-1,1)^{2}\subseteq E\subseteq\bigcup_{i=1}^{N}E_{i}\cap[-1,1]^{2}
\end{equation}
Finally, for $\eta \in \Omega_n$ let $S(\eta)\subseteq \Omega_n\times \mathbb{T}$
denote the set \begin{equation}
  \label{eq:Su}
  S(\eta)=\{(\xi,s)\in\Omega_n\times \mathbb{T}\,:\,(\sigma_n^{l_k}\eta,l_k \log_nm)\to (\xi,s) \textrm{ for some }l_k\to\infty\}
\end{equation}
i.e. $S(\eta)$ is the set of accumulation points of the orbit of $(\eta,0)$
under the transformation $(\xi,s)\mapsto(\sigma_n\xi,s+\log_nm)$. We also define $S'(\eta)\subseteq\Omega_n$ to be the set of accumulation points of the orbit of $\eta$ under $\sigma_n$, so \begin{equation}
  \label{eq:Sprimeu}
  S'(\eta)=P_1S(\eta)
\end{equation}

For $\eta\in\Omega_n$, let $\overline{\eta}=\eta$ if $\pi_n(\eta)$ has a unique base-$n$ expansion, and  otherwise let $\overline{\eta}$ be the other expansion.

We defer the proof of the following theorem to Section \ref{Section - proof of Theorem tangent sets}.

\begin{theorem} \label{Theorem - structure of tangnet sets}
Fix $f=(f_{1},f_{2})\in F$ with $f_2\neq 0,1$ and let $\eta\in\pi_n^{-1}(f_2)$. Then for every $m$-adic tangent set
$T\in T(F,f,m)$, there exists $(\xi,s)\in S(\eta)$ such that $T$ is a non-empty union of a $(\xi,s)$-multiset and a $(\overline{\xi},s)$-multiset. Conversely, if $(\xi,s)\in S(f_{2})$, then there is an
$m$-adic tangent set $T\in T(F,f,m)$ which a union of this type.

In the special case when $f_2=0$ or $f_2=1$, the same is true but omitting the $(\overline{\xi},s)$-multiset from the union.
\end{theorem}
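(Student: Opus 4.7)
The plan is to directly compute the pre-limit sets $T(F,f,m,l)$ via a ``mixed-level'' cell decomposition of $F$ at matched horizontal level $l$ and vertical level $l_2 = \lfloor l\log_n m \rfloor$, then pass to the Hausdorff limit.

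First I would set $l_2 = \lfloor l \log_n m \rfloor$ and $s_l = l\log_n m - l_2 \in [0,1)$, so that $m^l = n^{l_2} n^{s_l}$. Since $m > n$, one has $l_2 > l$ for large $l$: zooming isotropically by $m^l$ corresponds to resolving $F$ at horizontal IFS-level $l$ and vertical IFS-level $l_2$, plus a residual vertical stretch of $n^{s_l} \in [1, n]$. Fix a base-$m$ expansion $\omega$ of $f_1$ and the given $\eta$. Decomposing $F$ at level $l_2$ into cylinders $\phi_{(a,b)}(F)$ with $(a,b)\in\Gamma^{l_2}$, a direct calculation identifies the cylinders whose zoom image meets $Q$: they have $a_{\leq l}$ equal to a length-$l$ base-$m$ prefix of $f_1$ (at most two choices) and $b$ a length-$l_2$ base-$n$ prefix of a point in $[f_2-m^{-l}, f_2+m^{-l}]$. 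For each such pair $(a_{\leq l}, b)$, the sub-cells are indexed by $a' \in A_b := \prod_{i=l+1}^{l_2} \Gamma_{b_i}$, each having the explicit post-zoom form
\[
\diag(m^{-(l_2-l)},\, n^{s_l})\, F \;+\; \bigl(\pi_m(a') - f_1', \; -n^{s_l} f_2'\bigr),
\]
where $f_1' = \pi_m(\sigma_m^l \omega)$ and $f_2'=\pi_n(\sigma_n^{l_2}b^*)$ with $b^*$ the infinite vertical extension of $b$ relevant to the cell.

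Next I would pass to the Hausdorff limit. Given a subsequence $l_k \to \infty$ with $T(F,f,m,l_k) \to T$, extract a further subsequence so that $(\sigma_n^{l_k}\eta, s_{l_k}) \to (\xi, s) \in S(\eta)$, and so that $\sigma_m^{l_k} \omega$, $\sigma_n^{l_k} \overline{\eta}$, and the relevant vertical-shift limits also converge. By Proposition~\ref{Proposition - Hausdorff metric0}(4) applied to the linear maps $\diag(m^{-(l_2-l)}, n^{s_l})$, each stretched cell Hausdorff-converges to the degenerate rectangle $\{0\} \times n^s P_2(F)$; the translations $\pi_m(a') - f_1'$ for $a' \in A_b$ sweep a finite-level prefix set which Hausdorff-converges to $\pi_m(\widetilde{F}_\xi)$ when $b$ is the $\eta$-prefix (using $l_{2,k}-l_k \to \infty$ and $\sigma_n^{l_k}\eta\to\xi$), and to $\pi_m(\widetilde{F}_{\overline{\xi}})$ when $b$ is the $\overline\eta$-prefix. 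Combining via Proposition~\ref{Proposition - Hausdorff metric0}(2), the union over $a'$ converges to a single $(\xi,s)$- or $(\overline{\xi},s)$-set, and summing over the at-most-two horizontal prefixes of $f_1$ assembles a $(\xi,s)$-multiset and a $(\overline{\xi},s)$-multiset respectively, whose union agrees with $T$ on $(-1,1)^2$ by Proposition~\ref{Proposition - Hausdorff metric0}(3).

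For the converse, given any $(\xi,s) \in S(\eta)$, I would pick a realizing sequence $l_k$ and use compactness of $(\cmpct(Q), d_H)$ to extract a further subsequence along which $T(F,f,m,l_k)$ converges to some $T$; the forward analysis identifies $T$'s structure. The hardest part, I expect, will be the boundary analysis in the cell-decomposition step: keeping track of which ``adjacent'' cells actually contribute, verifying that those failing the compatibility condition $(a,b) \in \Gamma^{l_2}$ drop out, and correctly matching the surviving contributions to the theorem's $(\xi,s)$- and $(\overline{\xi},s)$-multisets (which may coincide or differ depending on whether $\pi_n(\xi)$ admits a unique $n$-ary expansion). The special case $f_2 \in \{0,1\}$, where no vertical cell exists on one side of $f_2$ within $[0,1]$, must be handled separately and accounts for the omission of the $(\overline{\xi},s)$-multiset in the theorem's last sentence.
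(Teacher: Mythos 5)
Your overall strategy coincides with the paper's: decompose $F$ into cells at horizontal level $l$ and vertical level $k=\lfloor l\log_n m\rfloor$, observe that after zooming by $m^l$ each group of compatible cells is a stretched union of copies of $F$ (the paper's basic $(b'',s)$-sets of Lemma \ref{lem:mini-sets}), pass to the Hausdorff limit, and obtain the converse by compactness of $\cmpct(Q)$. But there is a genuine gap exactly at the point you yourself flag as ``the hardest part''. In your limit step you only identify the contribution of cells whose vertical word $b$ is a prefix of $\eta$ or of $\overline{\eta}$, i.e.\ of an expansion of $f_2$ itself. That is not the correct list of contributing cells: the window $f+m^{-l}Q$ has height $2m^{-l}$, comparable to $n^{-k}$, so vertical cells adjacent to the one(s) containing $f_2$ also meet the window, and their words $b$ are prefixes of expansions of nearby points, not of $f_2$. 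For those cells nothing in your argument identifies the limit of the shifted suffixes $b''=\sigma_n^{l}b$, so a priori $T$ could contain pieces governed by sequences other than $\xi,\overline{\xi}$, and the claimed multiset structure does not follow. The missing ingredient is the paper's key estimate: from $|\pi_n(b)-f_2|\le 2m^{-l}$ and the semi-conjugacy of $\sigma_n$ with multiplication by $n$ on $\mathbb{R}/\mathbb{Z}$ one gets $|\pi_n(b'')-n^{l}f_2|\le 2(n/m)^{l}\to 0$ (here $m>n$ is essential), so along the subsequence \emph{every} contributing suffix converges to a base-$n$ expansion of $y=\lim n^{l_k}f_2$, i.e.\ to $\xi$ or $\overline{\xi}$, whether or not $b$ was a prefix of an expansion of $f_2$. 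Without this (or an equivalent) argument the forward direction is incomplete, and the converse, which relies on the same identification, inherits the gap.

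Two smaller inaccuracies. First, the analogous horizontal claim is wrong as stated: the prefixes $a_{\le l}$ of contributing cells need not be prefixes of an expansion of $f_1$ (``at most two choices''); any level-$l$ $m$-adic interval meeting $[f_1-m^{-l},f_1+m^{-l}]$ can contribute, giving up to three or four choices, as in the paper's counting lemma. This only changes the translations $z$ and the number of sets in the multiset, so it is harmless once corrected, but your explicit translation formula $\pi_m(a')-f_1'$ is then not valid for all surviving cells. Second, Proposition \ref{Proposition - Hausdorff metric0}\eqref{H-limit-of-union} applies to unions of a \emph{fixed} finite number of sets, whereas your union over $a'\in A_b=\prod_{i=l+1}^{l_2}\Gamma_{b_i}$ has cardinality growing with $l$; the paper avoids this by proving directly that $H(b_1\ldots b_l)\to \pi_m(\widetilde{F}_\eta)\times P_2(F)$ and invoking item \eqref{H-limit-of-union} only for the boundedly many groups indexed by $B(f,l)$ and the finitely many horizontal prefixes.
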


We emphasize that a tangent set always contains $0$. Therefore the $(\xi,s)$-multisets in the theorem intersect $(-1,1)^2$ non-trivially, and hence $T\cap (-1,1)^2$ contains a non-trivial open subset of a $(\xi,s)$-set.

The case $f_2=0,1$ is treated differently because, unlike other $n$-adic rationals, these numbers have a unique base-$n$ expansion. The geometric significance of this is that if  $f_2$ has two expansions, then a small square around $f$ can intersect $F$ at points both above and below the horizontal line bisecting the square, at points whose expansions end in both zeros and $n-1$'s. In contrast, for $f_2=0,1$, a small square around $f$ is centered at the top slice or bottom slice of $F$ and  as we ``zoom in'' along small squares centered at $f$, it remains the case that either the upper or bottom half of the set is empty.

In applications, we shall either not care about the identity of the limit point $(\xi,s$), provided in the theorem, or else we will control it by starting with  $f_2$  whose expansions are suitably engineered. The second component of $S(y)$ will not play any role in our analysis, but we have included it for future use.

\subsection{Covariance of tangent sets under affine embedding}\label{subsec:sovariance-of-tangent-sets} 

In our analysis we will use the fact that tangent sets transform nicely under affine embeddings (and more generally diffeomorphisms, but we shall not need this here). Specifically,

\begin{Proposition} \label{Proposition - Hausdorff metric}
  Let $g(x)=Ax+t$ be a non-singular affine transformation of $\mathbb{R}^2$. Let $X\subseteq Q$ be closed, and suppose that $g(X)\subseteq X$. Let $x\in X$ and set $y=g(x)\in X$.   For $p\in\NN$ large enough that $m^{-p}AQ\subset Q$, if  $T = \lim_{k\to\infty} [m^{l_k} (X -x])\cap Q \in T(X,x,m)$ and $T' = \lim_{k\to\infty}[m^{l_k-p} (X-y)]\cap Q \in T(X,y,m)$ as $k\to\infty$, then $m^{-p}AT \subseteq T'$.
\end{Proposition}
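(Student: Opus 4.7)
The statement should follow by a direct calculation that combines the affine structure of $g$ with the pointwise characterization of Hausdorff limits from Proposition \ref{Proposition - Hausdorff metric0}(\ref{H-limit-pointwise}). The strategy is to fix an arbitrary $v\in T$, exhibit a sequence of points in $T(X,y,m,l_k-p)$ converging to $m^{-p}Av$, and then invoke that same characterization in the reverse direction to conclude $m^{-p}Av\in T'$.

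I would begin by fixing $v\in T$ and using Proposition \ref{Proposition - Hausdorff metric0}(\ref{H-limit-pointwise}) to write $v=\lim_{k\to\infty} v_k$ with $v_k\in T(X,x,m,l_k)$. Thus $v_k=m^{l_k}(z_k-x)\in Q$ for some $z_k\in X$. The key algebraic input is that since $g$ is affine with $g(x)=y$, we have $g(z)-y=A(z-x)$ for every $z\in\mathbb{R}^2$. Applied with $z=z_k$ and scaled by $m^{l_k-p}$ this gives
\[
  m^{-p}Av_k \;=\; m^{l_k-p}\bigl(g(z_k)-y\bigr).
\]
Because $g(X)\subseteq X$ and $z_k\in X$, the point $g(z_k)$ lies in $X$, so the right-hand side lies in $m^{l_k-p}(X-y)$.

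The only verification remaining is that $m^{-p}Av_k\in Q$, which is immediate from $v_k\in Q$ combined with the hypothesis $m^{-p}AQ\subseteq Q$. Consequently $m^{-p}Av_k\in T(X,y,m,l_k-p)$ for every $k$. Passing to the limit, continuity of the linear map $u\mapsto m^{-p}Au$ yields $m^{-p}Av_k\to m^{-p}Av$, and one further application of Proposition \ref{Proposition - Hausdorff metric0}(\ref{H-limit-pointwise}) to the assumed convergence $T(X,y,m,l_k-p)\to T'$ places $m^{-p}Av$ in $T'$. Since $v\in T$ was arbitrary, $m^{-p}AT\subseteq T'$.

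I do not anticipate a substantive obstacle: the argument is essentially mechanical. The only place care is needed is the bookkeeping of the two scales $l_k$ and $l_k-p$, and the role of the hypothesis $m^{-p}AQ\subseteq Q$ is precisely to prevent the $\cap Q$ truncation from discarding the image point $m^{-p}Av_k$.
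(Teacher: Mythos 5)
Your proof is correct and takes essentially the same route as the paper: both arguments reduce to the identity $g(z)-y=A(z-x)$ together with the hypothesis $m^{-p}AQ\subseteq Q$, which give the inclusion $m^{-p}A\,T(X,x,m,l_k)\subseteq T(X,y,m,l_k-p)$ at each finite stage, and then pass to the Hausdorff limit. The only (cosmetic) difference is that you carry out the limit pointwise via Proposition \ref{Proposition - Hausdorff metric0}\eqref{H-limit-pointwise}, whereas the paper phrases the finite-stage inclusion at the level of sets and then invokes parts \eqref{H-limit-of-image} and \eqref{H-limit-on-open-set} of that proposition.
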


\begin{proof}
First assume that $p=0$. Then for every $l\in \mathbb{N}$ we have 
\begin{eqnarray*}
  [m^l (X-y)]\cap Q &=& [m^l (X-(Ax+t))]\cap Q \\
   &\supseteq& [m^l (AX+t-(Ax+t))]\cap Q \\
  &\supseteq& [m^l (AX-Ax))]\cap AQ\\
  &=& [m^l (A(X-x))]\cap AQ\\
  &=& [A( m^l (X-x))]\cap AQ\\
  &=& A( [m^l (X-x)]\cap Q).
\end{eqnarray*}
Let us  justify this calculation: the first equality follows since $y= Ax+t$. The second inclusion follows since $AX + t \subseteq X$. The third inclusion follows since $AQ \subseteq Q$. The fourth equality follows since $A$ is a linear map. The fifth equality follows since $A$ commutes with scalars. The final equality is true because $A$ is by assumption non-singular.

Now, by Proposition \ref{Proposition - Hausdorff metric0}\eqref{H-limit-of-image}, $[m^{l_k} (X-x)]\cap Q \rightarrow T$ implies $A ([m^{l_k} (X-x)]\cap Q) \rightarrow A(T)$. But by the above, $A ([m^{l_k} (X-x)]\cap Q)\subseteq [m^{l_k}(X-y)]\cap Q$, so by Proposition \ref{Proposition - Hausdorff metric0}\eqref{H-limit-on-open-set} (taking $Y=Q$), we obtain the desired relation between $AT \subseteq T'$.

The case $p>0$ is proved entirely analogously, starting from $[m^{l-p}A(X-y)]\cap Q\subseteq [m^{-l}(X-y)]\cap Q$ and using the fact that $m^{-p}AQ\subseteq Q$. We omit the details.
\end{proof}

To generalize this to the case of a diffeomorphisms $g$, replace $A$ by the derivative $D_xg$ in the conclusion, requiring it to be non-singular, assume $p$ large enough with respect to this map, and use the fact that $g|_{m^{-l}Q+x}(z)=D_xg(z)+o(|z|)$.

\section{Product case: Proof of Theorem \ref{Theorem product sets}}\label{Section - proof of product set theorem}
The following proof illustrates how one can apply the projection Theorem \ref{Hochman - Shmerkin} in order to study self-embedding of  products of (non-trivial) deleted digit sets. This  idea will play a key role also in the non-product case in the next section.

Let $K_1,K_2$ denote deleted digit sets in multiplicatively independent bases $m,n$, respectively, and assume their dimensions are strictly between $0$ and $1$. Let $g(x)=Ax+t, A\in GL(\mathbb{R}^2)$, and suppose that $g(F)\subseteq F$. We wish to show that $A=\diag(m^{-p},n^{-q})$ for some $p,q\in\mathbb{Q}$.

If $A$ is not diagonal or anti-diagonal, then $P_1\circ g$ and $P_2\circ g$ are non-principal. Thus $P_i\circ g(F)$ are non-principal images of $F=K_1\times K_2$, and they are subsets of $K_i =P_i(F)$ respectively. We conclude from Theorem \ref{Hochman - Shmerkin} that \begin{eqnarray*}
  \dim K_1 &\geq &\min\{1,\dim K_1 + \dim K_2\}\\
  \dim K_2 &\geq &\min\{1,\dim K_1 + \dim K_2\}\\
\end{eqnarray*}
This is possible only if $\dim K_1=\dim K_2 =1$ or $\dim K_1=\dim K_2 =0$, neither of which is consistent with our assumptions.

Thus $A$ is either diagonal or anti-diagonal. To rule out the latter possibility, note that if $A$ were anti-diagonal, then $P_1\circ g$ and $P_2\circ g$ would, respectively, be  affine embeddings of $K_2 \to K_1$ and of $K_1\to K_2$. By Theorem \ref{LCT2} this is only possible if these are singular affine maps, which is again a contradiction.

Finally, since $A$ is diagonal, we have that $P_1\circ g$ maps $K_1\to K_1$, so by Theorem \ref{LCT} its contraction ratio is a rational power of $m$, giving the first entry on the diagonal of $A$. The second entry is obtained by considering $P_2\circ g$. 

\section{Non-product case: Proof of Theorem \ref{Main Theorem} } \label{Section - proof of main Theorem}

Throughout this section we fix the following notation. Let $F$ be a Bedford-McMullen carpet defined by multiplicatively independent exponents $m>n\geq2$ and assume that $F$ is not a product set (we dealt with that case in the previous section). Let $g(x)=\alpha Ox + t$ be a non-singular similarity with $gF\subseteq F$.

Our aim is to show that $\alpha=1$ and the linear part of $g$ has the form $\diag(\pm 1,\pm 1)$. We do this in two steps. First, in Section  \ref{subsec:reduction-to-homotheties}, we show that the linear part of $g$ is of the form $\alpha\diag(\pm 1,\pm1)$. We then show that $\alpha=1$ by assuming that $0<\alpha<1$ and deriving a contradiction. This part takes up Sections \ref{subsec:analysis-of-slices} through \ref{subsec:case-B}

\subsection{Reduction to homotheties}\label{subsec:reduction-to-homotheties}

In this section we show that the linear part of  $g$ is diagonal, i.e. of the form $\alpha\diag(\pm 1,\pm 1)$. We aply similar ideas to those in the  product set case, using the fact that the $m$-adic tangent sets of $F$ are product sets and inherit some of the symmetries of $F$.

We define a piece of a set $A$ to be a non-empty open subset of $A$,
or equivalently, a non-trivial intersection of $A$ with an open set.

\begin{Lemma}\label{non-principal-projections-in-tangent-sets}
Let $f=(f_{1},f_{2})\in F$, set $h=(h_{1},h_{2})=g(f)$, and let $T\in T(F,h,m)$ and  $\eta \in \pi_n ^{-1} (f_2)$. Then
\begin{enumerate}
\item If $O$ is neither diagonal nor anti-diagonal, then there exists a 
$\xi\in S'(\eta)$ and $\zeta\in\{\xi,\overline{\xi}\}$ such that each of the sets $P_{1}T$
and $P_{2}T$ contains a non-principal linear image of a piece of
$(\pi_{m}\widetilde{F}_{\zeta})\times P_{2}F$, and in particular they contain an
affine copy of $P_{2}F$ and an affine image of a piece of $\pi_{m}\widetilde{F}_{\zeta}$.

\item If $O$ is anti-diagonal, then there exists a $\xi\in S'(\eta)$ and $\zeta\in\{\xi,\overline{\xi}\}$
  such that  $P_{1}T$ contains an affine image of $P_{2}F$, and $P_{2}T$ contains an affine image of a piece 
of $\pi_{m}\widetilde{F}_{\zeta}$.
\end{enumerate}
\end{Lemma}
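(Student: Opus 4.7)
The plan is to use the covariance relation of Proposition~\ref{Proposition - Hausdorff metric} to pull the given tangent set $T$ at $h$ back to a tangent set at $f$, where Theorem~\ref{Theorem - structure of tangnet sets} gives explicit structural information we can exploit. Write $T=\lim_k T(F,h,m,l_k)$ for some $l_k\to\infty$, and choose $p\in\NN$ large enough that $m^{-p}\alpha OQ\subseteq Q$. Using compactness of $\cmpct(Q)$, pass to a subsequence so that $T_0:=\lim_k T(F,f,m,l_k+p)$ exists in $T(F,f,m)$; then Proposition~\ref{Proposition - Hausdorff metric} yields $BT_0\subseteq T$, where $B=m^{-p}\alpha O$. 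By Theorem~\ref{Theorem - structure of tangnet sets} there is some $(\xi,s)\in S(\eta)$, and hence $\xi\in S'(\eta)$, such that $T_0$ is a nonempty union of an $(\xi,s)$-multiset and an $(\overline{\xi},s)$-multiset. By \eqref{eq:2} and nonemptyness, $T_0$ contains $E\cap(-1,1)^2$ for some $(\zeta,s)$-set $E$ with $\zeta\in\{\xi,\overline{\xi}\}$, where $E=\diag(1,n^s)(\pi_m\widetilde{F}_\zeta\times P_2F+w)$ for suitable $w$. Consequently $P_iT\supseteq(P_i\circ B)(E\cap(-1,1)^2)$ for $i=1,2$.

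Next, in case (1), assume $O$ is neither diagonal nor anti-diagonal. Then neither is $B$ (since it is a positive scalar multiple of $O$), so by the contrapositive of the lemma in Section~\ref{sub:projections} both $P_1\circ B$ and $P_2\circ B$ are non-principal linear functionals. Right-composition with the non-singular diagonal matrix $\diag(1,n^s)$ preserves non-principality, so $P_iT$ contains a non-principal linear image of a piece of the product $\pi_m\widetilde{F}_\zeta\times P_2F$, as claimed. For the ``in particular'' assertion, choose inside this piece a product sub-piece $U_1\times U_2$ with $U_2$ a cylinder of $P_2F$; fixing the first coordinate and letting the second vary over $U_2$ gives an affine image of $U_2$, which by self-similarity of $P_2F=D(P_2\Gamma,n)$ is an affine copy of all of $P_2F$, while fixing the second coordinate and varying the first gives an affine image of the piece $U_1\subseteq\pi_m\widetilde{F}_\zeta$.

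Finally, in case (2), when $O$ is anti-diagonal so is $B$, and a direct calculation gives $(P_1\circ B)(x,y)=by+c_1$ and $(P_2\circ B)(x,y)=cx+c_2$ for some nonzero $b,c$. Applied to a product sub-piece $U_1\times U_2$ of $E$ (after accounting for the diagonal scaling), the first projection depends only on the $y$-coordinate and yields an affine image of the cylinder $U_2\subseteq P_2F$, hence an affine copy of $P_2F$ by self-similarity, while the second projection depends only on the $x$-coordinate and yields an affine image of the piece $U_1\subseteq\pi_m\widetilde{F}_\zeta$. The main difficulty is bookkeeping: one must verify carefully that the interaction of the scaling $\diag(1,n^s)$, the translation by $w$, and the linear part $B$ preserves non-principality (respectively anti-diagonality) under composition with the coordinate projections, and then use self-similarity of $P_2F$ to promote a cylinder-piece to a full affine copy. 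No further conceptual input beyond Proposition~\ref{Proposition - Hausdorff metric}, Theorem~\ref{Theorem - structure of tangnet sets}, and the lemma of Section~\ref{sub:projections} is required.
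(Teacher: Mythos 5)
Your proof is correct and follows essentially the same route as the paper's: pull the tangent set at $h$ back to one at $f$ via Proposition \ref{Proposition - Hausdorff metric}, invoke Theorem \ref{Theorem - structure of tangnet sets} to find a piece of a $(\zeta,s)$-set inside it (nonempty because $0$ lies in the tangent set), and push it forward through $P_i\circ B$, using non-principality (resp.\ anti-diagonality) and self-similarity of $P_2F$ to upgrade a cylinder-piece to a full affine copy. The only difference is cosmetic: you carry the normalization $m^{-p}$ and the scaling $\diag(1,n^s)$ explicitly, whereas the paper absorbs the former into a parenthetical remark and handles the latter by switching to a different non-principal functional.
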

\begin{proof}
We prove (1). Suppose that $T=\lim_{k\rightarrow\infty}(m^{l_{k}}(F-h))\cap Q$
for a sequence $l_{k}\rightarrow\infty$. Passing to a further
subsequence, we can assume that $T'=\lim_{k\rightarrow\infty}(m^{l_{k}}(F-f))\cap Q$
exists, so $T'\in T(F,f,m)$. By Proposition \ref{Proposition - Hausdorff metric}, we know that 
\begin{equation}
\alpha OT'\subseteq T\label{eq:embedding-of-tangent-sets}
\end{equation}
(This is a simplification - the proposition requires that $\alpha O(Q)\subseteq Q$, and if this does not hold the conclusion is $m^{-p} \alpha O (T') \subseteq T$ for some $p\in\NN$. But the remainder of the proof proceeds unchanged so we remain with the simple version). By Theorem \ref{Theorem - structure of tangnet sets},  we can find $\xi \in S'(\eta)$ and $r\in[1,n]$ such
that $T'$ contains a homothetic image of a piece of $(\pi_{m}\widetilde{F}_{\xi})\times(r P_{2}F)$ or of $(\pi_{m}\widetilde{F}_{\overline{\xi}})\times(r P_{2}F)$. Without the loss of generality assume this is true for $\xi$. 
So, by (\ref{eq:embedding-of-tangent-sets}), $T$ contains a homothetic
image of a piece of $ O((\pi_{m}\widetilde{F}_{\xi})\times(r P_{2}F))$.
Since by assumption $O$ does not preserve the union of the axes, $\alpha P_{1} O$
and $\alpha P_{2} O$ are non-principal functionals, and
we conclude that $P_{1}T$ and $P_{2}T$ contain non-principal linear
images of pieces of $(\pi_{m}\widetilde{F}_{\xi})\times(r P_{2}F)$,
and (using a different non-principal functional) also of $(\pi_{m}\widetilde{F}_{\xi})\times P_{2}F$. This
proves the first part of the first statement. The second part follows
(note that \emph{a-priori} we only get that $P_{1}T$  contains
an affine image of a piece of $P_{2}F$, but since $P_{2}F$ is self-similar,
such a set must contain an affine image of $P_{2}F$ itself).

The proof of (2) is identical, noting that when $O$ is anti-diagonal,
$P_{1} O$ is proportional to $P_{2}$ and $P_{2} O$ is
proportional to $P_{1}$.\end{proof}

\begin{Lemma}\label{dim-of-principal-projections-of-tangent-sets}
Let $j\in[n]$ maximize $|\Gamma_{j}|$. If $f\in F$ and  $T\in T(F,f,m)$, then $\dim P_{2}T=\dim P_{2}F$, and $\dim P_{1}T\leq\log|\Gamma_{j}|/\log m$.\end{Lemma}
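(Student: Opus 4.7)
The plan is to apply Theorem \ref{Theorem - structure of tangnet sets} as a black box and read off the dimensions of the two principal projections directly. Pick any $\eta\in\pi_n^{-1}(f_2)$; the theorem supplies $(\xi,s)\in S(\eta)$ such that $T$ is a union of a $(\xi,s)$-multiset and a $(\overline{\xi},s)$-multiset (only the first summand, if $f_2\in\{0,1\}$). Unwinding the definitions, and noting that the boundary indeterminacy in the multiset definition, $\bigcup_i E_i\cap(-1,1)^2\subseteq E\subseteq\bigcup_i E_i\cap[-1,1]^2$, is invisible to Hausdorff dimension, $T$ is a finite union of pieces of sets of the form
\[
\diag(1,n^s)\cdot\bigl(\pi_m(\widetilde{F}_\zeta)\times P_2F+z\bigr),\qquad \zeta\in\{\xi,\overline{\xi}\}.
\]
Both claims will follow by projecting this description and using that Hausdorff dimension is monotone, translation invariant, scale invariant, and takes the maximum over finite unions.

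For the upper bound on $\dim P_1 T$: projection via $P_1$ kills the $y$-scaling, so $P_1T$ is contained in a finite union of translates of pieces of $\pi_m(\widetilde{F}_\xi)$ and $\pi_m(\widetilde{F}_{\overline{\xi}})$, whence $\dim P_1 T\leq\max\{\dim\pi_m(\widetilde{F}_\xi),\dim\pi_m(\widetilde{F}_{\overline{\xi}})\}$. The trivial upper-bound half of \eqref{eq:slice-dimension} (the one coming directly from the Moran-type covers $E^k$ described in Section \ref{Bedford - McMullen carpets}) gives, for any $\zeta\in\Omega_n$,
\[
\dim\pi_m(\widetilde{F}_\zeta)\leq\liminf_{l\to\infty}\frac{\sum_{i=1}^{l}\log|\Gamma_{\zeta_i}|}{l\log m}\leq\frac{\log|\Gamma_j|}{\log m},
\]
the last inequality by maximality of $j$, which delivers the desired bound.

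For $\dim P_2 T$: the upper bound $\dim P_2 T\leq\dim P_2 F$ comes from the same description, since each defining set projects under $P_2$ to a translate of $n^s P_2 F$, and the factor $n^s$ does not affect dimension. For the matching lower bound I would invoke the observation following Theorem \ref{Theorem - structure of tangnet sets}: $T\cap(-1,1)^2$ contains a non-trivial open subset of a $(\xi,s)$-set, and projecting yields that $P_2 T$ contains an affine image of a non-empty open piece of $P_2 F=D(P_2\Gamma,n)$. Since $P_2 F$ is a self-similar set satisfying the open set condition, every non-empty open subset of it contains a cylinder set, which is an affine copy of $P_2 F$ itself and so has Hausdorff dimension $\dim P_2 F$. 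Combining gives $\dim P_2 T=\dim P_2 F$. The whole argument is essentially mechanical once Theorem \ref{Theorem - structure of tangnet sets} is granted; I do not anticipate a substantive obstacle.
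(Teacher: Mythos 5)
Your proposal is correct and follows essentially the same route as the paper: read off the finite-union description of $T$ from Theorem \ref{Theorem - structure of tangnet sets}, bound $\dim P_1T$ by the covering upper bound in \eqref{eq:slice-dimension} together with maximality of $|\Gamma_j|$, and obtain $\dim P_2T=\dim P_2F$ by combining the finite-union upper bound with the fact that $T$ contains an open piece of a $(\xi,s)$-set, whose $P_2$-projection contains a cylinder of the self-similar set $P_2F$. No substantive differences from the paper's argument.
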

\begin{proof}
By Theorem \ref{Theorem - structure of tangnet sets}, $T$ contains a piece of a (translate of a) set of the form $\pi_m \widetilde{F}_{\eta}\times(r P_{2}F)$, and is contained in a finite union of sets of this form. So (i) $P_2T$ is contained in a finite union of affine images of $P_{2}F$ and (ii) $P_{2}T$ contains a piece of an affine image of $P_2 (F)$.  (i) implies that $\dim P_2T\leq \dim P_2F$. On the other hand, (ii) and self-similarity of $F$ implies that $P_2T$ actually contains an affine image of the entire set $P_2F$, so $\dim P_2T\geq\dim P_2F$, and we obtain the first equality.

For the second equality,  recall that by \eqref{eq:slice-dimension},
for any $\xi\in\Omega_{n}$,
\[
\dim\pi_{m}\widetilde{F}_{\xi}=\liminf_{k\rightarrow\infty}\frac{1}{k\log m}\sum_{i=1}^{k}\log|\Gamma_{\xi_{i}}|
\]
and since $|\Gamma_{\xi_{i}}|\leq|\Gamma_{j}|$ for all $i$ we have
$\dim\pi_{m}\widetilde{F}_{\xi}\leq\log|\Gamma_{j}|/\log m$. Since
$P_{1}T$ is contained in a finite union of homothetic images of sets of this form, we get the second dimension bound.\end{proof}

\begin{theorem}\label{thm:reduction-to-homotheties}
If $g$ is a similarity map with $gF\subseteq F$ then the linear
part of $g$ is diagonal.\end{theorem}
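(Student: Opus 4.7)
The plan is to argue by contradiction. Assume the linear part $\alpha O$ of $g$ is not diagonal; since $O$ is orthogonal, either (I) $O$ is neither diagonal nor anti-diagonal, or (II) $O$ is anti-diagonal. In both sub-cases the first step is to force $|\Gamma_j|=m$ (where $j$ maximizes $|\Gamma_j|$), which makes $F$ contain the full horizontal segment $L=[0,1]\times\{j/(n-1)\}$; the second step then derives a contradiction from the tangent-set structure at a suitably chosen point of $g(L)\subseteq F$.

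For Step~1, I would fix $f\in F$ with $f_2=j/(n-1)$, so the unique expansion of $f_2$ is $\eta=(j,j,\ldots)$, $S'(\eta)=\{\eta\}$, and $\pi_m\widetilde{F}_\eta=D(\Gamma_j,m)$. In Case (I), combining the lower bounds on $\dim P_iT$ from Lemma~\ref{non-principal-projections-in-tangent-sets}(1) together with Theorem~\ref{Hochman - Shmerkin} with the upper bounds from Lemma~\ref{dim-of-principal-projections-of-tangent-sets}, and using $\dim P_2F>0$ (since $F$ is not a product set, hence not supported in a line), a routine computation yields $|\Gamma_j|=m$ and $\dim P_2F=1$. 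In Case (II), the anti-diagonal $O$ induces non-constant affine embeddings $P_2F\hookrightarrow P_1F$ and $P_1F\hookrightarrow P_2F$, so Theorem~\ref{LCT2} rules out both $\dim P_iF\in(0,1)$; both must then equal $1$, and Lemma~\ref{non-principal-projections-in-tangent-sets}(2) combined with $\dim P_1T\le\log|\Gamma_j|/\log m$ forces $|\Gamma_j|=m$.

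For Step~2, $|\Gamma_j|=m$ means $\Gamma\supseteq[m]\times\{j\}$, so $L\subseteq F$ and hence $gL\subseteq F$ is a tilted segment in Case (I) and a vertical segment in Case (II). Since $F\ne[0,1]^2$, some $j_1\in[n]$ has $|\Gamma_{j_1}|<m$; I would take $j_1\in\{1,\ldots,n-2\}$ generically, leaving the extremal cases $j_1\in\{0,n-1\}$ to the special clause of Theorem~\ref{Theorem - structure of tangnet sets}. The coordinate $g(x,y^*)_2$ is a non-constant affine function of $x\in[0,1]$ (the $(2,1)$-entry of $O$ is nonzero in both cases), so its range is an interval $I\subseteq\RR$; I then pick $p_2\in I$ whose unique $n$-ary expansion $\eta'$ terminates in $(j_1,j_1,\ldots)$, and take $p=g(f)\in gL\subseteq F$ accordingly.

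Any tangent set $T\in T(F,p,m)$ then carries two conflicting descriptions. Applied at $p$ with $\eta'$, Theorem~\ref{Theorem - structure of tangnet sets} forces $T$ to be a $(\xi',s')$-multiset with $\xi'=(j_1,j_1,\ldots)$, so $P_1T$ is contained in a finite union of translates of $D(\Gamma_{j_1},m)$ and hence $\dim P_1T\le\log|\Gamma_{j_1}|/\log m<1$. But applying Lemma~\ref{non-principal-projections-in-tangent-sets} to the same $T$ through the source point $f\in L$ with $\eta=(j,j,\ldots)$ yields, since $\pi_m\widetilde{F}_\eta=[0,1]$ and $P_2F=[0,1]$, that $P_1T$ contains a non-principal linear image (Case I) or an affine image (Case II) of a piece of $[0,1]^2$, hence an open interval; thus $\dim P_1T=1$, contradicting the previous bound. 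The main technical obstacle is Case (II) of Step~1: without Theorem~\ref{LCT2}, the Diophantine identity $|\Gamma_j|^{\log n}=|P_2\Gamma|^{\log m}$ admits spurious non-trivial integer solutions (e.g.\ for $(m,n)=(4,9)$ one has $(|\Gamma_j|,|P_2\Gamma|)=(2,3)$), so LCT2 is essential to eliminate these solutions and force $|\Gamma_j|=m$.
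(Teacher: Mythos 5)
Your overall architecture is the same as the paper's: your Step 1 is the contrapositive of the paper's Case 1, and your Step 2 corresponds to the paper's Case 2 (full horizontal slice, push it by $g$ to tilt it, then play a lower bound $\dim P_1T=1$ from Lemma \ref{non-principal-projections-in-tangent-sets} against an upper bound $\dim P_1T<1$ from Theorem \ref{Theorem - structure of tangnet sets} at a well-chosen point of $g(L)$). Your anti-diagonal subcase of Step 1, applying Theorem \ref{LCT2} directly to the induced embeddings $P_2F\hookrightarrow P_1F$ and $P_1F\hookrightarrow P_2F$, is a legitimate and in fact cleaner variant of the paper's route (which goes through tangent sets and Baire to embed $P_2F$ into a slice projection); just note that LCT2 only excludes the case where both projections have dimension strictly between $0$ and $1$, so to conclude "both equal $1$" you still need to rule out dimension $0$ (excluded since $F$ is not a product set) and the mixed case (excluded because an interval cannot embed affinely into a set of dimension less than one).

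The genuine gap is your treatment of the extremal case $j_1\in\{0,n-1\}$ in Step 2. You cannot defer it to the "special clause" of Theorem \ref{Theorem - structure of tangnet sets}: that clause applies only when the height of the point at which the tangent is taken is exactly $0$ or $1$, and there is no reason the interval $I=P_2(g(L))$ contains $0$ or $1$. In the extremal case your $p_2$ is an $n$-adic rational in $(0,1)$, so it has two base-$n$ expansions, one ending in $0$'s and one ending in $(n-1)$'s, and the theorem then only controls $T$ by multisets attached to both constant sequences $(0,0,\ldots)$ and $(n-1,n-1,\ldots)$ (this is what its proof, carried out mod $1$, actually yields). The opposite extreme digit may have full fiber $\Gamma=[m]$, and when $n=2$ this is forced: then $\{1,\dots,n-2\}=\emptyset$, the full digit $j$ is one of $0,1$ and your $j_1$ is the other, so the bound $\dim P_1T\le\log|\Gamma_{j_1}|/\log m<1$ is unjustified and the contradiction evaporates. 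Since $n=2$ is admissible (e.g. $m=3$, $n=2$), this is not a negligible boundary case. The fix is the device the paper uses in the proof of Proposition \ref{Prop:common-part-of-first-prop}: choose $p_2\in I$ whose expansion has tail alternating $j_1$ with another digit (say the full digit $j$). Such points are dense, are not $n$-adic rational and hence have a unique expansion, and every sequence in the orbit closure of that tail contains $j_1$ in every other coordinate, so each associated $\pi_m\widetilde{F}_{\xi}$ has dimension at most $\frac{1}{2}\left(1+\log_m|\Gamma_{j_1}|\right)<1$; with this replacement your argument goes through in both of your cases and is essentially the paper's proof.
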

\begin{proof}
Assume for the sake of contradiction that the linear part of $g$
is not diagonal.

\textbf{Case~1:~$\dim P_{2}F<1$~or~$\dim F_{y}<1$~for~all~$y$}.\textbf{
}Choose $j\in[n]$ such that $|\Gamma_{j}|$ is maximal, set $\xi=(j,j,j,\ldots)\in\Omega_{n}$,
choose any $\zeta\in\Gamma_{j}^{\mathbb{N}}\subseteq\widetilde{F}_{\xi}$,
and let $f_{1}=\pi_{m}\zeta$ and $f_{2}=\pi_{n}\xi$ so that $f=(f_{1},f_{2})\in F$.
Set $h=g(f)$ and choose any $T\in T(F,h,m)$. 

Assume for the moment also that $O$ is not anti-diagonal (so $O$ is not diagonal and not anti-diagonal). Since $S'(\xi)= \lbrace \xi \rbrace$ and $\overline{\xi} = \xi$,
Lemma \ref{non-principal-projections-in-tangent-sets} tells us that $P_{1}T$ and $P_{2}T$ each contains
a non-principal linear image of a piece of $\pi_{m}(\widetilde{F}_{\xi})\times P_{2}F$,
which by Theorem \ref{Hochman - Shmerkin} and Lemma \ref{dim-of-principal-projections-of-tangent-sets} implies  
\begin{eqnarray*}
\dim P_{2}F\;=\;\dim P_{2}T & \geq & \min\{1,\dim\pi_{m}\widetilde{F}_{\xi}+\dim P_{2}F\}\\
\dim\pi_{m}\widetilde{F}_{\xi}\;\geq\;\dim P_{1}T & \geq & \min\{1,\dim\pi_{m}\widetilde{F}_{\xi}+\dim P_{2}F\}
\end{eqnarray*}
But these inequalities are possible only if $\dim P_{2}F=\dim\pi_{m}\widetilde{F}_{\xi}=0$
or $\dim P_{2}F=\dim\pi_{m}\widetilde{F}_{\xi}=1$, both of which contradict our
assumptions. 

It remains to rule out the possibility that $O$ is anti-diagonal.
If it were, we argue as above, and this time Lemma \ref{non-principal-projections-in-tangent-sets} tells
us that $P_{1}T$ contains an affine copy of a piece of $P_2 (F)$,
and hence (by self-similarity of $P_2 (F)$) an affine copy of $P_2 (F)$. But $P_{1}T$ is contained in a finite union of affine images of pieces of $\pi_{m}\widetilde{F}_{\xi}$, so by Baire's theorem and self-similarity of $\pi_{m}\widetilde{F}_{\xi}$ and $P_2 (F)$, we find that $\pi_{m}\widetilde{F}_{\xi}$  contains an affine
copy of $P_{2}F$. By Theorem \ref{LCT2}, since $\dim P_2 (F) >0$, this is only possible if $\pi_{m}\widetilde{F}_{\xi} =[0,1]$ (i.e. it does not have intermediate dimension). Thus there exist $y\in P_2 (F)$ such that $\dim F_y =1$. Since $O$ is anti-diagonal this implies that $P_2 (g(F))$ contains and affine image of $F_y = [0,1]$, so $\dim P_2 (F) =1$, contradicting our assumptions.

\textbf{Case~2:~$\dim P_{2}F=1$~and~$\dim F_{y}=1$~for~some~$y$}.
The first assumption implies that $P_2F=[0,1]$ (because $P_2F$ is a deleted-digit set), and the second that there is a $j\in[n]$ with $|\Gamma_{j}|=m$ (since otherwise, by equation \eqref{eq:slice-dimension} $\log(m-1)/\log m$ would be an upper bound on the dimension of all horizontal slices). In particular for $y=\pi_n(j,j,j,\ldots)$ we have $F_y=[0,1]$. Assuming that $O$ is not diagonal, it maps $F_y\times \{y\}$ to a line segment not parallel to the $x$-axis, so $P_{2}gF$ contains an interval.
Now, there must also be at
least one $u\in[n]$ with $|\Gamma_{u}|<m$, for if $\Gamma_{u}=[m]$
for all $u\in[n]$ then we would have $\Gamma=[m]\times[n]$ and $F=[0,1]^2$, contrary to assumption. Fix
such a $u$. Since $P_{2}F=[0,1]$ we also \,have $|\Gamma_{u}|>0$
(otherwise, $P_{2}F\subseteq D([n]\setminus\{u\},n)$ contradicting
$P_{2}F=[0,1]$). Consider the set 
\[
E=\{\xi\in\Omega_{n}\,:\,\exists k\mbox{ s.t. }\xi_{i}=u\mbox{ for all }i\geq k\}
\]
Then $\pi_{n}E$ is dense in $[0,1]$, and in particular there exists
an $h_{2}\in\pi_{n}E\cap P_{2}gF$. Choose $h_{1}$ so that $h=(h_{1},h_{2})\in gF\subseteq F$.
Choose some $T\in T(F,h,m)$. 

If $O$ is neither diagonal nor anti-diagonal, then by Lemma \ref{non-principal-projections-in-tangent-sets},
$P_{1}T$ contains an affine copy of $P_{2}F=[0,1]$, so $\dim P_{1}T=1$.
On the other hand, the set $T$ is contained in finitely many product
sets of the form $(\pi_{m}\widetilde{F}_{\zeta})\times(r P_{2}F)$
with $\zeta$ or $\overline{\zeta}$ in the orbit closure of some $\xi\in E$. Since the
orbit closure of $\xi$ consists only of constant sequences $(u,u,u,\ldots) = \overline{(u,u,u,\ldots)}$,
and $\pi_{m}\widetilde{F}_{(u,u,\ldots)}=D(\Gamma_{u},m)$ has dimension
$<1$, we conclude that $\dim P_{1}T<1$, a contradiction. 

Finally, to show that $O$ cannot be anti-diagonal, argue as above:
Lemma \ref{non-principal-projections-in-tangent-sets} implies again that $\pi_{m}\widetilde{F}_{(u,u,\ldots)}$
contains an affine copy of a piece of $P_{2}F$, which is an interval.
This contradicts $\dim\pi_{m}\widetilde{F}_{(u,u,\ldots)}<1$.\end{proof}

Having shown that the linear part of $g$ is $\alpha\diag (\pm 1,\pm 1)$, the linear part of $g^2$ is just $\alpha^2 I$, so $g^2$ is a homothety which  contracts by $\alpha^2$. Thus, if we show that $g^2$ is an isometry, then we will know that $g$ is as well. Since $g^2(F)\subseteq F$, we have reduced the proof of Theorem \ref{Main Theorem} to the following: Show that if a homothety maps $F$ into itself, then it is an isometry.

\subsection{Analysis of slices}\label{subsec:analysis-of-slices}

By the discussion at the end of the last section, it is enough to prove Theorem \ref{Main Theorem} when $g$ is a homothety. So assume henceforth that $g(x)=\alpha x+t$, and assume that $\alpha<1$; we will eventually arrive at a contradiction, implying that $\alpha=1$.

In this section we prove a technical result, showing that $\alpha<1$ implies that  there are large vertical and horizontal slices - in fact, for the horizontal case, there are ``maximally large'' slices. The proof relies again on the machinery of tangent sets.

The map $g:F\to F$ is a contraction, and $F$ is compact; therefore there exists a fixed point in $F$. We denote it by $f^*=(f^*_1,f^*_2)$, so we have $g(f^*)=f^*\in F$. 

\begin{Lemma}
  Let $X_i,X\in\cmpct(Q)$ with $X_i\to X$, let $Y\in\cmpct(Q)$, and let  $\{g_i\}$ be a bounded family of homotheties such that $g_i(Y)\subseteq X_i$. Then there exists an accumulation point $h$ of $\{g_i\}$ such that $h(Y)\subseteq X$.
\end{Lemma}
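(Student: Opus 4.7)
The plan is to use a straightforward compactness extraction together with the pointwise characterization of the Hausdorff limit given in Proposition \ref{Proposition - Hausdorff metric0}\eqref{H-limit-pointwise}.

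First, identify each homothety in the family with a pair $(\alpha_i,t_i)\in\RR\times\RR^2$ via $g_i(x)=\alpha_i x+t_i$. The assumption that $\{g_i\}$ is a bounded family means these parameters lie in a bounded subset of $\RR\times\RR^2$, so by Bolzano--Weierstrass we can pass to a subsequence (which, abusing notation, we still call $g_i$) such that $\alpha_i\to\alpha$ and $t_i\to t$ for some $\alpha\in\RR$ and $t\in\RR^2$. Define $h(x)=\alpha x+t$; then $g_i\to h$ uniformly on every compact subset of $\RR^2$, and in particular uniformly on $Y$. (If $\alpha=0$ then $h$ is a constant map; the argument below is unaffected by this, although in the applications of the lemma one can rule this out separately.)

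Second, verify that $h(Y)\subseteq X$. Fix an arbitrary $y\in Y$. Since $g_i(y)\in g_i(Y)\subseteq X_i\subseteq Q$, and $g_i(y)\to h(y)$ by uniform convergence, the point $h(y)$ is the limit of a sequence of points belonging to $X_i$ and lying in the compact set $Q$. Applying Proposition \ref{Proposition - Hausdorff metric0}\eqref{H-limit-pointwise} to the Hausdorff convergence $X_i\to X$, this exhibits $h(y)$ as an element of $X$. Since $y\in Y$ was arbitrary, $h(Y)\subseteq X$, as required.

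There is no real obstacle here, since boundedness of the family immediately yields a subsequential limit and the Hausdorff convergence of $X_i\to X$ is tailor-made to promote pointwise limits of points of $X_i$ into elements of $X$. The only minor care needed is to record that the convergence $g_i\to h$ is uniform on $Y$ (so that one may plug a fixed $y\in Y$ into the sequence $g_i$ and extract the limit $h(y)$), which is automatic for a convergent sequence of affine maps restricted to a bounded set.
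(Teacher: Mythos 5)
Your proof is correct and follows essentially the same route as the paper: extract a convergent subsequence $g_i\to h$ by compactness of the bounded parameter set, then pass the containment to the limit. The only cosmetic difference is that you verify $h(Y)\subseteq X$ pointwise via Proposition \ref{Proposition - Hausdorff metric0}\eqref{H-limit-pointwise}, whereas the paper invokes Hausdorff convergence $g_i(Y)\to h(Y)$ of the image sets; both fill in the same "it follows easily" step.
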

\begin{proof}
  Passing to a subsequence we can assume $g_i\to h$ for some $h$. Then $g_i(Y)\to h(Y)$, and since $g_i(Y)\subseteq X_i$, it follows easily that $h(Y)\subseteq \lim X_i = X$.
\end{proof}

\begin{Proposition}
Suppose that $\alpha<1$. Then any $m$-adic tangent set $T\in T(F,f^*,m)$ contains a homothetic image of $F$.
\end{Proposition}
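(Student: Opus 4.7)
The plan is to exploit the fact that $f^*$ is the fixed point of $g$, which gives the clean identity
\begin{equation*}
g^j(x) - f^* = \alpha^j(x - f^*) \qquad \text{for every } j \in \mathbb{N},
\end{equation*}
since $f^* = \alpha f^* + t$. Consequently $g^j(F) \subseteq F$ is a homothetic copy of $F$ with ratio $\alpha^j$, centered at $f^*$:
\begin{equation*}
g^j(F) - f^* = \alpha^j (F - f^*).
\end{equation*}
The strategy is then to rescale by $m^{l_k}$ at exactly the right rate so that this shrinking copy of $F$ is magnified back to a set of order one inside the tangent set.

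More precisely, suppose $T = \lim_{k\to\infty} (m^{l_k}(F - f^*)) \cap Q$ for some sequence $l_k \to \infty$. Since $0 < \alpha < 1$, for each $k$ I pick the unique integer $j_k \in \mathbb{N}$ with $\beta_k := m^{l_k}\alpha^{j_k} \in (\alpha, 1]$; concretely $j_k = \lceil l_k \log m / \log(1/\alpha) \rceil$. Then $\{\beta_k\}$ is a bounded sequence in the compact interval $[\alpha,1]$, so after passing to a subsequence $\beta_k \to \beta$ for some $\beta \in [\alpha, 1]$, in particular $\beta > 0$.

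Now observe that because $F \subseteq [0,1]^2$ and $f^* \in [0,1]^2$, we have $F - f^* \subseteq Q$, hence for every $\beta_k \leq 1$,
\begin{equation*}
m^{l_k}(g^{j_k}(F) - f^*) = \beta_k (F - f^*) \subseteq Q.
\end{equation*}
Combined with the inclusion $g^{j_k}(F) \subseteq F$, this gives
\begin{equation*}
\beta_k(F - f^*) \;\subseteq\; m^{l_k}(F - f^*) \cap Q.
\end{equation*}
Applying the lemma stated just before the proposition with $X_k = m^{l_k}(F - f^*) \cap Q$, $Y = F - f^*$, and the homotheties $h_k(x) = \beta_k x$ (a bounded family since $\beta_k \leq 1$), we obtain $h(Y) \subseteq T$ for some accumulation point $h$ of $\{h_k\}$; but the only accumulation point here is $h(x) = \beta x$, so
\begin{equation*}
\beta(F - f^*) \subseteq T.
\end{equation*}
This is a homothetic image of $F$ with positive contraction $\beta$, which is what we wanted.

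No step looks like a serious obstacle: the only thing to take care of is the rate matching in the choice of $j_k$, which ensures $\beta = \lim \beta_k$ is both positive and at most $1$, so that the rescaled copy stays inside $Q$ and the auxiliary lemma applies without truncation.
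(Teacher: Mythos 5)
Your proof is correct and follows essentially the same strategy as the paper's: iterate $g$ at its fixed point, match the iteration count to the zoom scale so that the rescaled copy $\beta_k(F-f^*)$ has contraction bounded in $(\alpha,1]$, and pass to the limit using the compactness lemma stated just before the proposition. A minor improvement on your side is that the identity $g^{j}(x)-f^{*}=\alpha^{j}(x-f^{*})$ makes the translation part vanish exactly, and choosing $j_k$ as a function of the given scales $l_k$ (rather than choosing the scale as a function of the iterate, as the paper does) handles an arbitrary sequence of scales directly.
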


\begin{proof}
Since $\alpha<1$, for each $k\in\mathbb{N}$ we can choose  $l=l(k) \in\mathbb{N}$ such that $m^{-(l+1)}<\alpha^k \leq m^{-l}$. Then $g^k([0,1]^2)$ is a cube of side $\alpha^k$ containing $f^*$ (because $g^k (f^*)=f^*$), so $g^k(F)\subseteq m^{-l}Q+f^*$. Therefore, the set $T(F,f^*,m,l)=m^l(F-f^*)\cap Q$ contains the set \[
m^l(g^k(F)-f^*) = m^l\alpha^k F + t'
\]
for some $t'=t'(l)\in\mathbb{R}^2$. Now, by choice of $l=l(k)$ we have \[
  \frac{1}{m} \leq m^l\alpha^k \leq 1
\]
In particular $m^l\alpha^kF$ is contained in a cube of side length at most $1$, and since after translation by $t'$ it is contained in $Q$ we must have $t'\in [-2,2]^2$. Thus, we see that $T(F,f^*,m,l(k))$ contains $g_{l(k)}(F)$ where $g_{l(k)}$ is a similarity with contraction in the range $[1/m,1]$, its orthogonal part is diagonal, and its translation part is in $[-2,2]^2$. The conclusion now follows directly from the previous lemma.
\end{proof}
     
\begin{Proposition} \label{Proposition 3}
If $\alpha<1$ then there exists $y\in P_2F$ such that $\dim F_y = \dim P_1 (F)$ and $x\in P_1F$ with $\dim F^x>0$.
\end{Proposition}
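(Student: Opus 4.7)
The plan is to exploit the preceding proposition, which tells us that under $\alpha<1$ every tangent set $T\in T(F,f^*,m)$ contains a homothetic copy $\beta F+\tau\subseteq T$ of $F$. For the horizontal slice statement, I would project the inclusion under $P_1$ to obtain $\beta P_1(F)+\tau_1\subseteq P_1(T)$, and then invoke Lemma \ref{dim-of-principal-projections-of-tangent-sets}, which gives the upper bound $\dim P_1(T)\leq\log|\Gamma_{j^*}|/\log m$ for any $j^*\in[n]$ maximizing $|\Gamma_j|$. Since $\dim P_1(F)=\log|P_1\Gamma|/\log m$ and $\Gamma_{j^*}\subseteq P_1\Gamma$, the inequality $|P_1\Gamma|\leq |\Gamma_{j^*}|$ forced by this squeeze must in fact be an equality, so $\Gamma_{j^*}=P_1\Gamma$. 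Taking $y=\pi_n(j^*,j^*,\ldots)=j^*/(n-1)\in P_2F$, the horizontal slice is then the self-similar set $F_y=D(\Gamma_{j^*},m)=D(P_1\Gamma,m)=P_1(F)$, yielding $\dim F_y=\dim P_1(F)$ as desired.

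For the vertical slice statement, I would first note that the existence of any column $i\in P_1\Gamma$ with $|\Gamma^i|\geq 2$ is already enough: at $x=i/(m-1)\in P_1F$ one has $F^x\supseteq D(\Gamma^i,n)$, which has dimension $\log|\Gamma^i|/\log n>0$. Thus it suffices to rule out the degenerate case in which $|\Gamma^i|=1$ for every $i\in P_1\Gamma$. In this case $\Gamma$ is the graph of a function $h\colon P_1\Gamma\to P_2\Gamma$ and $\Gamma_j=h^{-1}(j)$, so the equality $\Gamma_{j^*}=P_1\Gamma$ established above forces $h\equiv j^*$. But then $\Gamma=P_1\Gamma\times\{j^*\}$, so $F=P_1(F)\times\{j^*/(n-1)\}$ is a product set, contradicting our standing hypothesis.

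I do not foresee any serious obstacle: once the preceding proposition and Lemma \ref{dim-of-principal-projections-of-tangent-sets} are in hand, the horizontal part reduces to a two-line dimension comparison, and the vertical part is a short combinatorial consequence of the equality $\Gamma_{j^*}=P_1\Gamma$ combined with the non-product hypothesis on $F$. The conceptual subtlety worth flagging is the asymmetry of the tangent set structure, whose components have horizontal factor $\pi_m\widetilde{F}_\xi$ but a \emph{fixed} vertical factor proportional to $P_2F$; this gives a useful upper bound on $\dim P_1(T)$ but none on $\dim P_2(T)$, which is why the vertical argument has to route through the non-product hypothesis rather than mirror the horizontal one directly.
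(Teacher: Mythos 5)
Your proposal is correct and takes essentially the same approach as the paper: the lower bound $\dim P_1T\geq\dim P_1F$ comes from the preceding proposition, the upper bound comes from the tangent-set structure (you via Lemma \ref{dim-of-principal-projections-of-tangent-sets}, the paper directly via Theorem \ref{Theorem - structure of tangnet sets}), and your vertical-slice argument through $\Gamma_{j^*}=P_1\Gamma$ and the non-product hypothesis is the same as the paper's. The only difference is one of packaging: the paper realizes the maximal slice as $F_{\pi_n\xi}$ for some $\xi$ in the orbit closure of an expansion of $f_2^*$, whereas you extract the combinatorial equality $\Gamma_{j^*}=P_1\Gamma$ from the dimension squeeze and take the explicit periodic slice $y=\pi_n(j^*,j^*,\ldots)$, which is a slightly more concrete rendering of the same argument.
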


\begin{proof}
Consider an $m$-adic tangent set $T\in T(F,f^*,m)$. Let $\eta \in \pi_n ^{-1} (f_2 ^*)$. On the one hand, by Theorem \ref{Theorem - structure of tangnet sets} and the fact that every piece of a slice has the same dimension as the slice itself, there exists $\xi \in S'(\eta)\cup\overline{S'(\eta)}$  such that $\dim P_1T=\dim \pi_m\widetilde{F}_\xi$. On the other hand, by the previous proposition, $T$ contains the image of $F$ under a similarity whose linear part is diagonal, hence $P_1T$ contains an affine copy of $P_1F$. It follows that \[
  \dim P_1F \leq \dim \pi_m\widetilde{F}_\xi \leq \dim F_{\pi_n\xi}  \leq \dim P_1F
\]
where the last inequality is because $P_1F$ contains every horizontal slice of $F$. This proves claim about horizontal slices for $y=\pi_n(\xi)$.

For the second statement, suppose $\dim F^x=0$ for all $x$. Then  $|\Gamma^i|=1$ for all $i\in P_1\Gamma$, that is, for every $i \in P_1\Gamma$ there exists a unique $j\in [n]$ with $(i,j)\in\Gamma$. Now, by the first part there is a horizontal slice $F_y$ with dimension equal to the dimension of $P_1F=D(P_1\Gamma,m)$. It follows that there is some $j\in[n]$ such that $|\Gamma_j|=|P_1\Gamma|$, and since $\Gamma_j \subseteq P_1 \Gamma$ we have $\Gamma_j = P_1 \Gamma$. But this and the previous property of $\Gamma$ immediately imply that $\Gamma = P_1\Gamma\times\{j\}$, which in turn implies that $F$ is contained in the horizontal line of height $\pi_n(j,j,j,\ldots)$. This contradicts our standing assumptions.
\end{proof}

We end this section with two basic observations about  slices whose position is suitably rational.

\begin{Lemma}\label{lem:self-similar-slices}
  Suppose that $\eta\in\Omega_n$ and that the sequence $(\eta_k,\eta_{k+1}\ldots)$ is periodic with least period $p$. Suppose that $\eta$ is the unique expansion of $y=\pi_n(\eta)$. Then, writing $\Lambda = m^p\cdot \pi_m(\prod_{i=k}^{k+p-1}\Gamma_{\eta_i})$ and $E=\pi_m(\prod_{i=1}^{k-1}\Gamma_{\eta_i})$, \[
  F_y = E  + \frac{1}{m^{k-1}}D(\Lambda,m^p)
  \]
  In particular, if $\eta=(j,j,j,\ldots)$ then $F_y=D(\Gamma_j,m)$.
\end{Lemma}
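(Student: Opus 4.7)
The plan is a direct unfolding of the symbolic description of the slice. Because $y$ has a unique base-$n$ expansion $\eta$, the general formula $F_y=\bigcup_{\eta'\in\pi_n^{-1}(y)}\pi_m(\widetilde{F}_{\eta'})$ from Section \ref{Bedford - McMullen carpets} collapses to $F_y=\pi_m(\widetilde{F}_\eta)=\pi_m\bigl(\prod_{i=1}^{\infty}\Gamma_{\eta_i}\bigr)$. I would then split each $\omega\in\widetilde{F}_\eta$ into a prefix of length $k-1$ and a tail starting at index $k$, so that
\[
\pi_m(\omega)\;=\;\sum_{i=1}^{k-1}\frac{\omega_i}{m^i}\;+\;\frac{1}{m^{k-1}}\sum_{i=k}^{\infty}\frac{\omega_i}{m^{i-k+1}}.
\]
The prefix sum ranges over all of $E$ as $\omega_1,\ldots,\omega_{k-1}$ vary independently over $\Gamma_{\eta_1},\ldots,\Gamma_{\eta_{k-1}}$, so the remaining task is to identify the tail sum with $D(\Lambda,m^p)$.

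For the tail, the key observation is that the periodicity of $(\eta_k,\eta_{k+1},\ldots)$ with period $p$ gives $\Gamma_{\eta_{k+jp+l}}=\Gamma_{\eta_{k+l}}$ for all $j\geq 0$ and $0\leq l<p$. Hence the constraint on $(\omega_k,\omega_{k+1},\ldots)$ decouples across successive blocks of length $p$, each block ranging independently over $\prod_{i=k}^{k+p-1}\Gamma_{\eta_i}$. Regrouping the tail sum accordingly and using $m^{jp+s}=(m^p)^j m^s$, the $j$-th block contributes $\lambda_j/(m^p)^{j+1}$, where $\lambda_j:=m^p\cdot\pi_m(\omega_{k+jp},\ldots,\omega_{k+jp+p-1})\in\Lambda$. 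Since the $\lambda_j$ vary independently over $\Lambda$, the tail sum traces out exactly $\sum_{j=1}^{\infty}\lambda_{j-1}/(m^p)^j$, which by definition is the base-$m^p$ deleted-digit set $D(\Lambda,m^p)$.

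Combining the two pieces yields $F_y=E+m^{-(k-1)}D(\Lambda,m^p)$. For the special case $\eta=(j,j,j,\ldots)$ one sets $k=1$ and $p=1$: the prefix is empty, so $E=\{0\}$, and $\Lambda=m\cdot\pi_m(\Gamma_j)=\Gamma_j$, yielding $F_y=D(\Gamma_j,m)$.

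There is no real obstacle to this argument. The only essential input is the unique-expansion hypothesis, which is what permits identifying $F_y$ with the single symbolic slice $\pi_m(\widetilde{F}_\eta)$ rather than a union of two such slices; without it one would need a separate bookkeeping step to show the ``other'' symbolic slice does not contribute new points outside $E+m^{-(k-1)}D(\Lambda,m^p)$. The rest is simply a mechanical change of base from $m$ to $m^p$, exploiting periodicity to make the block digits independent.
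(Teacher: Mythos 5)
Your proof is correct and follows essentially the same route as the paper's: both reduce $F_y$ to $\pi_m(\widetilde{F}_\eta)$ via the unique-expansion hypothesis, split off the length-$(k-1)$ prefix to produce $E$, and identify the periodic tail with $D(\Lambda,m^p)$ by regrouping base-$m$ digits into base-$m^p$ blocks. The only detail the paper adds is the remark that $\pi_n(j,j,j,\ldots)$ indeed has a unique expansion (even for $j=0$ or $j=n-1$), which is what licenses applying the general formula in the special case.
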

\begin{proof}
  Let $a=\eta_1\ldots\eta_{k-1}$ and  $b=\eta_k\ldots\eta_{k+p-1}\in[n]^p$, so $\eta=abbbb\ldots$. Since $\eta$ is the unique expansion of $y$ we have \begin{eqnarray*}
F_y &=& \pi_m(\widetilde{F}_\eta)\\
&=& \pi_m(\Gamma_a\times(\Gamma_b)^{\mathbb{N}})\\
&=& \pi_m(\Gamma_a) + \frac{1}{m^{k-1}}\pi_m(\Gamma_b^{\mathbb{N}})\\
&=& E + \frac{1}{m^{k-1}}\pi_m(\Gamma_b^{\mathbb{N}})\\
  \end{eqnarray*}
  The claim follows upon noting that $\pi_m(\Gamma_b^{\mathbb{N}})=D(m^p\pi_m(\Gamma_b),m^p)$, since this is the set of $x$ that can be expanded in base $m$ as a concatenation of blocks from $\Gamma_b$, which is the same as restricting the digits in base $m^p$ to the digits $m^p\cdot \pi_mv$, $v\in\Gamma_b$, which is  $\Lambda$. This proves the first part; the second is immediate from the fact that $\pi_n(j,j,j,\ldots)$ has a unique expansion (even if $j=0,n-1$) and period $p=1$.
\end{proof}

\begin{Lemma}\label{lem:size-of-intervals-on-slices}
  Let $\Lambda =\{j\in [n]\,:\,\Gamma_j=[m]\}$ and suppose that $|\Lambda|<|P_2\Gamma|$. If $\eta\in\Omega_n$ and $\eta_k\notin\Lambda$ then any sub-interval  of $\pi_n(\widetilde{F}_\eta)$ is of length at most $1/m^{k}$. In particular, \begin{enumerate}
  \item If $\eta_k\notin\Lambda$ for infinitely many $k$ then $\pi_n(\widetilde{F}_\eta)$ has empty interior.
  \item If $y=\pi_n(\eta)$ with  $\eta_k\notin\Lambda$, and if either $y$ has a unique expansion, or if the other expansion $\eta'$ terminates in a symbol that is not in $\Lambda$, then any sub-interval of $F_y$ has length at most $1/m^k$.
\end{enumerate}
 \end{Lemma}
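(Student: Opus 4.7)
The plan is to exploit the recursive Moran-type construction for $\pi_m(\widetilde{F}_\eta)$ given in Section~\ref{Bedford - McMullen carpets}: one has $\pi_m(\widetilde{F}_\eta) = \bigcap_{l \geq 1} \bigcup E^l$, where $E^l$ is a finite collection of closed intervals of length $1/m^l$ obtained by iteratively subdividing each interval of $E^{l-1}$ into $m$ equal closed sub-intervals and retaining those whose index lies in $\Gamma_{\eta_l}$. Any sub-interval $J$ of $\pi_m(\widetilde{F}_\eta)$ is in particular contained in $\bigcup E^k$, so $|J|$ is bounded by the length of the longest connected component of $\bigcup E^k$.

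For the main bound, the hypothesis $\eta_k\notin\Lambda$ says $\Gamma_{\eta_k}\subsetneq[m]$, so at step $k$, within each $E^{k-1}$-cell $I$ at least one of the $m$ sub-intervals at level $k$ is discarded, creating a gap of length at least $1/m^k$ inside $I$. The technical core is a combinatorial analysis of the resulting gap pattern in $\bigcup E^k$ -- in particular, controlling whether retained level-$k$ cells can chain across boundaries of adjacent $E^{k-1}$-cells -- which should yield the claimed length bound $1/m^k$ on connected components of $\bigcup E^k$, and therefore on $|J|$.

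For part (1), since $\eta_k\notin\Lambda$ holds for infinitely many $k$, applying the main bound along a sequence $k\to\infty$ forces every sub-interval of $\pi_m(\widetilde{F}_\eta)$ to have length $0$, so the set has empty interior. For part (2), write $F_y = \pi_m(\widetilde{F}_\eta)\cup\pi_m(\widetilde{F}_{\eta'})$ using the (at most two) base-$n$ expansions of $y$. If $y$ has a unique expansion, the main bound applied to $\eta$ suffices directly. Otherwise, the hypothesis that the alternative expansion $\eta'$ eventually equals a symbol outside $\Lambda$ implies $\eta'_l\notin\Lambda$ for all $l$ sufficiently large; hence by part (1), $\pi_m(\widetilde{F}_{\eta'})$ has empty interior. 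Since $\pi_m(\widetilde{F}_\eta)$ is closed, any sub-interval of the union $F_y$ must actually lie inside $\pi_m(\widetilde{F}_\eta)$, to which the main bound applies.

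The principal obstacle I anticipate is pinning down the sharp $1/m^k$ bound on connected components of $\bigcup E^k$: a naive estimate within a single $E^{k-1}$-cell gives only $(m-1)/m^k$, and in principle retained level-$k$ cells can be adjacent across $E^{k-1}$-boundaries, producing longer runs. Handling this cleanly will likely require either exploiting finer structure of $\Gamma_{\eta_k}$ (such as the behaviour of the missing digits with respect to $\{0,m-1\}$) or a more careful recursive estimate combining the gap structures at levels $k$ and $k-1$.
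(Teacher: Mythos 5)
Your overall architecture is sound, and your handling of parts (1) and (2) is correct and matches the paper's reduction: if $y$ has a unique expansion then $F_y=\pi_m(\widetilde{F}_\eta)$, and otherwise the hypothesis on $\eta'$ together with part (1) makes $\pi_m(\widetilde{F}_{\eta'})$ have empty interior, so any interval in the union $F_y=\pi_m(\widetilde{F}_\eta)\cup\pi_m(\widetilde{F}_{\eta'})$ must lie in the first set. The genuine gap is exactly the step you defer: the length bound on connected components of $\bigcup E^k$ is never proved, only announced as something the combinatorial analysis ``should yield,'' and your closing paragraph concedes that the cross-boundary chaining problem is unresolved. Moreover, your worry is not a technicality that a cleverer argument will dissolve: the constant $1/m^k$ cannot be obtained, because it is not the truth. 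If some $\Gamma_j=[m]$ and some $\Gamma_{j'}=[m]\setminus\{i\}$ with $0<i<m-1$, take $\eta_k=j'$ and $\eta_l=j$ for $l\neq k$; then the retained level-$k$ cells carrying digits $i+1,\dots,m-1$ of one parent cell together with digits $0,\dots,i-1$ of the adjacent parent form a single run, and $\pi_m(\widetilde{F}_\eta)$ contains an interval of length exactly $(m-1)/m^k>1/m^k$. What \emph{is} true, and is a one-line addition to your construction, is the bound $(m-1)/m^k$ (in particular $\le 1/m^{k-1}$): each parent cell omits at least one level-$k$ sub-cell, so a run can cross at most one parent boundary (crossing two would force a parent with all $m$ sub-cells retained), and a run crossing one boundary is a retained suffix of digits $>i$ plus a retained prefix of digits $<i$, hence at most $m-1$ cells. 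This weaker constant is all the applications in cases (B) and (C) of the main theorem use, since there one always has $\eta_k\notin\Lambda$ for some $k\geq p+1$ against an interval of length at least $1/m^p$.

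For comparison, the paper does not analyze the Moran components at all: it picks a forbidden digit $i\in[m]\setminus\Gamma_{\eta_k}$ and observes that the open intervals $\left((pm+i)/m^k,(pm+i+1)/m^k\right)$, $p\in\ZZ$, consist of points every base-$m$ expansion of which has $k$-th digit $i$, hence are disjoint from $\pi_m(\widetilde{F}_\eta)$; since consecutive such intervals are separated by gaps of length $(m-1)/m^k$, any interval contained in the slice projection has at most that length. (The paper states the conclusion with the constant $1/m^k$ as well; by the example above, $(m-1)/m^k$ is what either argument actually delivers, and it suffices.) So either finish your proof with the ``at most one boundary crossing'' remark and the corrected constant, or switch to the forbidden-digit intervals; as written, the central estimate of the lemma is missing.
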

\begin{proof}
  By assumption $\eta_k\notin\Lambda$ so $\Gamma_{\eta_k}\neq [m]$, and we can choose $i\in [m]\setminus \Gamma_{\eta_k}$. The set of numbers $y$ such that every base-$m$ expansion of $y$ has $i$ in the $k$-th digit is a collection of open intervals whose endpoints are the $m$-adic numbers $(pm+i)/m^k$, $p\in\mathbb{Z}$ ; the union of these intervals is $1/m^k$-dense in $[0,1]$, and contained in the complement of $\pi_m(\widetilde{F}_\eta)$. This proves the main part of the lemma, and conclusion (1) is immediate. Conclusion (2) follows from the fact that if $y$ has a unique expansion $\eta$ then $F_y= \pi_m ( \widetilde{F}_\eta)$; whereas in the case it has two expansions, the hypothesis and (1) guarantee that $\pi_n(\widetilde{F}_{\overline{\eta}})$ has empty interior, and so the length of sub-intervals of  $F_y=\pi_n(\widetilde{F}_\eta)\cup\pi_n(\widetilde{F}_{\overline{\eta}})$ are determined by the first set in the union.
\end{proof}

  \subsection{Three remaining cases}
So far we have reduced Theorem \ref{Main Theorem} to proving that if $g(x)=\alpha x+t$ and $g(F)\subseteq F$ then $\alpha = 1$. We conclude the proof by reducing our problem to  a one-dimensional problem about deleted digit sets. The first ingredient in this strategy is the observation that since $g$ is a homothety, it induces an action on the projections and slices of $F$. Specifically, writing as usual $t=(t_1,t_2)$, let \begin{eqnarray*}
  g_1(x) &=& \alpha x + t_1\\
  g_2(y) &=& \alpha y+t_2
\end{eqnarray*}
Then for $i=1,2$ we have $g_iP_i=P_ig$, which implies that $g_i$ maps the projection $P_iF$ into itself. Moreover, it is easy to check that the vertical line $\{x\}\times \mathbb{R}$ is mapped by $g$ into the vertical line $\{g_1(x)\}\times\mathbb{R}$, and it acts on this line as $g_2$; similarly, the  horizontal line $\mathbb{R}\times\{y\}$ is mapped by $g$ to $\mathbb{R}\times\{g_2(y)\}$, and $g$ acts on this line as $g_1$. Using $gF\subseteq F$, it follows that \begin{eqnarray}
  g_2(F^x) &\subseteq& F^{g_1(x)} \label{eq:x-slice-action}\\
  g_1(F_y) &\subseteq& F_{g_2(y)} \label{eq:y-slice-action}
\end{eqnarray}

The second ingredient is that the projections of $F$ to the axes are deleted digit sets (elementary), and so are some of its horizontal and vertical slices (see Lemma \ref{lem:self-similar-slices}). We may hope to apply the logarithmic Commensurability theorem (Theorem \ref{LCT}) or a similar result to them, to obtain algebraic information about $\alpha$.

This is best demonstrated by example.  Consider the case when $0<\dim P_iF<1$ for $i=1,2$. Note that the lower bound is automatic from our assumption that $F$ is not a product set. Then since $g_1$ maps $P_1F=D(P_1\Gamma,m)$ into itself, Theorem \ref{LCT} implies that $\alpha$ is a rational power of $m$. The same argument applied to $g_2$ and $P_2F$ shows that $\alpha$ is a rational power of $n$. But since $\log m/\log n\notin\QQ$, this is only possible if $\alpha=1$.

To deal with the general case when one or both projections of $F$ are intervals, our analysis now splits into three cases which are based on the nature of the  horizontal fibers:
\begin{enumerate}
\item[A.]  $|\Gamma_j|<m$ for all $j\in [n]$ (equivalently, no horizontal slice contains intervals).
\item[B.]  There is a unique $j$ with $|\Gamma_j|=m$ (equivalently, there is a countable infinity of horizontal slices containing intervals).
\item[C.] There are at least two $j$'s with $|\Gamma_j|=m$ (equivalently, uncountably many horizontal slices contain intervals).
\end{enumerate}

It is clear that this covers all possibilities. The equivalence with the statements in parentheses about the dimension and topology of slices follows from Lemma \ref{lem:size-of-intervals-on-slices}, since $F_y$ has non-empty interior if and only if $y$ has a base-$n$ expansion $\xi$ such that  $\Gamma_{\xi_k}=[m]$ for all large enough $k$, and the set of such $\xi$ will be either empty (if $|\Gamma_j|<m$ for all $j$), countable (if $\Gamma_j=[m]$ for a unique $j$), or uncountable (if $\Gamma_j=[m]$ for at least two values of $j$).

The proof of cases (A) and (C) are similar to each other, combining analysis of the action of $g$ on projections with its action on slices of $F$. The case (B) is slightly different, and for it we use a more combinatorial analysis of the lengths of intervals contained in slices and how they are transformed under $g$. The proofs are given in the next few section.

\subsection{Proof of case (A)}
We assume no horizontal slice has dimension one. Suppose towards a contradiction that $\alpha<1$. By Proposition \ref{Proposition 3}, this implies $\dim P_1 (F) <1$. Set $K=P_1F$ and $\Lambda=P_1\Gamma$, so that $K=D(\Lambda,m)$. Since $F$ is not a product set, $|\Lambda|\geq 2$, and since $\dim P_1F<1$, we actually have $2\leq |\Lambda|<m$. As we observed in the previous section, $g_1(x)=\alpha x+t_1$ maps $P_1F$ into itself, i.e.
\begin{equation*}
\alpha K + t_1 \subseteq K.
\end{equation*}
By Theorem \ref{LCT}, $\alpha = m^q, q\in \mathbb{Q}$.  By replacing $g$ by $g^p$ we can replace $\alpha$ by $\alpha^p$, and therefore we can assume that $\alpha = 1/m^l$ for some $l\in \mathbb{N}$. By Lemma \ref{Lemma - translation of self embedding} we know that $t_1$ is $n$-adic rational. The proof is completed by the following proposition, which we state separately for re-use.

\begin{Proposition}\label{Prop:common-part-of-first-prop}
If $g(F)\subseteq F$ with $g(x)=x/m^l+t$ for some $l\in\NN$, and if $t_1$ is $m$-adic rational, then $g$ is an isometry.
\end{Proposition}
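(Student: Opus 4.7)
The plan is to suppose $l \geq 1$ and derive the contradiction $l\log m/\log n \in \QQ$, forcing $l = 0$. Write $g_i(x) = x/m^l + t_i$ for $i = 1, 2$; from $g(F) \subseteq F$ come the covariance relations $g_2(P_2F) \subseteq P_2F$ and $g_2(F^x) \subseteq F^{g_1(x)}$.

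The first step is to try the projection: $g_2$ maps $P_2F = D(P_2\Gamma, n)$ into itself. When $0 < \dim P_2F < 1$, Theorem \ref{LCT} directly gives $\log(m^{-l})/\log n \in \QQ$, done. So reduce to $P_2F = [0,1]$ (i.e., $P_2\Gamma = [n]$). Next find a vertical slice that is a non-degenerate base-$n$ deleted-digit set. Choose $a \in P_1\Gamma \cap \{1,\ldots,m-2\}$ with $2 \leq |\Gamma^a| \leq n-1$ and set $x^* = a/(m-1) = \pi_m(aaa\ldots)$; then $x^*$ has unique base-$m$ expansion and, by the vertical version of Lemma \ref{lem:self-similar-slices}, $F^{x^*} = D(\Gamma^a, n)$ has dimension in $(0,1)$.

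Since $t_1 = u/m^{k}$ is $m$-adic rational, the base-$m$ expansion of $g_1(x^*) = a/((m-1)m^l) + u/m^k$ is eventually periodic with period $1$ and tail digit $a \notin \{0, m-1\}$, hence unique (no carries can propagate past position $k$ because $a+1 < m$). Lemma \ref{lem:self-similar-slices} (vertical version) then expresses $F^{g_1(x^*)}$ as a finite union of sets $n^{-K}(F^{x^*} + p_v)$ with integer $K, p_v$. Substituting into $g_2(F^{x^*}) \subseteq F^{g_1(x^*)}$ places us in the setup of Corollary \ref{Cor:generalized-embedding-of-dd-sets}, but with contraction $m^{-l}$ in place of $n^{-l}$: repeating the corollary's proof (Baire to find a cylinder of $F^{x^*}$ whose image lies in a single piece, then conjugation) yields a similarity $h: F^{x^*} \to F^{x^*}$ of contraction $n^{K'}m^{-l}$ for some integer $K'$. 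Theorem \ref{LCT} applied to $h$ then gives $K' - l\log m/\log n \in \QQ$, so $l\log m/\log n \in \QQ$, contradicting multiplicative independence of $m,n$.

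If no such $a$ exists, the standing assumptions on $F$ (non-product, not $[0,1]^2$, not on a line) together with $P_2\Gamma = [n]$ force, by a short combinatorial analysis, that $\Gamma = \{(a, j_a) : a \in [m]\}$ for some $j : [m] \to [n]$, so $F$ is essentially the graph of the digit-substitution function $f(\sum x_k/m^k) = \sum j_{x_k}/n^k$. Then $g(F) \subseteq F$ yields the functional equation $f(x/m^l + t_1) = f(x)/m^l + t_2$; expanding both sides via the digit-substitution definition of $f$, and tracking the finitely many base-$m$ carries introduced by $t_1 = u/m^k$, the contribution of $x_j$ to the LHS is proportional to $n^{-l-j}$ while on the RHS it is proportional to $m^{-l}n^{-j}$. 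Varying the tail of $x$ (possible since $|P_1\Gamma| \geq 2$) forces $m^{-l} = n^{-l}$, the final contradiction. The main obstacle is this degenerate graph case: the other cases reduce almost mechanically to Logarithmic Commensurability on a carefully chosen vertical slice, whereas the graph case requires an explicit digit-level comparison of the two sides of a functional equation.
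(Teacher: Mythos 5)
Your main branch is essentially the paper's own argument: pick a digit $a\notin\{0,m-1\}$ with $2\le|\Gamma^a|\le n-1$, use the point $x^*=\pi_m(a,a,a,\ldots)$, observe that $m$-adic rationality of $t_1$ keeps the base-$m$ expansion of $g_1(x^*)$ unique with tail digit $a$, so that $F^{g_1(x^*)}$ is a finite union of translates of a scaled copy of $F^{x^*}=D(\Gamma^a,n)$, and then run the Baire/conjugation step to contradict multiplicative independence via Theorem \ref{LCT}; the preliminary reduction through $g_2(P_2F)\subseteq P_2F$ is a harmless shortcut. The gap is in your case analysis. It is false that the failure of your choice of $a$ forces $\Gamma$ to be a graph $\{(a,j_a):a\in[m]\}$. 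When $n=2$ the condition $2\le|\Gamma^a|\le n-1$ is vacuous, so \emph{every} carpet with $P_2\Gamma=[n]$ lands in your fallback; for instance the paper's own illustrative example $m=3$, $n=2$, $\Gamma=\{(0,0),(0,1),(2,0)\}$ is non-product, has $P_2F=[0,1]$, admits no $a$ of your kind, and is not a graph (the column over the digit $0$ carries two digits). More generally you miss the configurations where the only columns of intermediate size sit at $a\in\{0,m-1\}$, and where some column is full ($|\Gamma^a|=n$) while all other columns are empty or singletons. None of these is covered by your graph-case functional equation, so the proof is incomplete (the functional-equation sketch itself also needs care with non-unique expansions and carries, though that part looks repairable).

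For comparison, the paper closes exactly these cases by other means. First, Proposition \ref{Proposition 3} (applicable since $\alpha=m^{-l}<1$ in the situation where the proposition is used) produces a column $j_1$ with $|\Gamma^{j_1}|\ge2$, so the graph case never actually arises and your explicit digit-level computation is unnecessary. Second, when $j_1\in\{0,m-1\}$ or $|\Gamma^{j_1}|=n$, the paper chooses a second digit $j_2\neq j_1$ with $1\le|\Gamma^{j_2}|<n$ (which exists unless $F$ lies on a vertical line or equals $P_1F\times[0,1]$) and uses the period-two point $x=\pi_m(j_2,j_1,j_2,j_1,\ldots)$: by Lemma \ref{lem:self-similar-slices} the slice $F^x$ is the deleted-digit set $D(n\Gamma^{j_1}+\Gamma^{j_2},n^2)$, whose dimension is strictly between $0$ and $1$, and the same commensurability argument goes through for it. Your $n=2$ and full-column configurations cannot be reached by adjusting a single constant digit, so some version of this alternating (period-two) construction, or an equivalent device, genuinely has to be added to your proof.
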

\begin{proof}

By Proposition \ref{Proposition 3}, some vertical slice of $F$ has positive dimension, so there exists $j_1\in[m]$ with $1<|\Gamma^{j_1}| \leq n$. To simplify the presentation, assume that $j_1 \neq 0,m-1$ and that  $1<|\Gamma^{j_1}| < n$. We explain how to eliminate these assumptions at the end of the proof.

Write \[
  \Lambda' = \Gamma^{j_1} \subseteq [n]\;\;\;\;\textrm{and}\;\;\;\;K'=D(\Lambda',n)
\]
Note that $0< \dim K <1$ since $1<|\Lambda'|<n$. Also define
\begin{equation*}
\xi=(j_1, j_1, j_1, j_1, j_1,...)\in\Omega_m\;\;\;\;\textrm{and}\;\;\;\; x=\pi_m(\xi)=\sum_{k=1} ^\infty \frac{j_1}{m^k}
\end{equation*} 
Then $x\in P_1F$ and, since $\xi$ is the unique expansion of $x$, by Lemma \ref{lem:self-similar-slices}, $F^x=D(\Lambda',n)$.

Write $x'=g_1(x)=x/m^l+t_1$. By \eqref{eq:x-slice-action} we have \begin{equation}
  \frac{1}{m^l} F^x + t_1 \subseteq F^{x'}. \label{eq:vertical-slice-transformation}
\end{equation}
By assumption $t_1$ is $n$-adic rational, whereas $x$ (and hence $x/m^l$) is not (using our assumption $j_1\neq 0,n-1$). Therefore $x'$ is not $n$-adic rational, and has a unique expansion $\xi'\in(\Lambda')^\NN$ terminating in $j_1$'s; in fact we will have \[
  \xi'_k = j_1\;\;\;\;\;\;\textrm{for all }k\geq l+2
  \]
  Therefore by Lemma \ref{lem:self-similar-slices} again, $F^{x'}$ is a union of finitely many translates of $\frac{1}{n^{l+1}}K$, so \eqref{eq:vertical-slice-transformation} and Corollary \ref{Corollary from LCT}  imply that $1/m^l$ is a rational power of $n$. Since $\log m/\log n\notin\mathbb{Q}$, this is impossible.

We now explain how to treat the case when $j_1=0$ or $m-1$, or that  $|\Lambda| =n$. Let $j_2 \neq j_1$ be such that $1\leq |\Gamma^{j_2}|<n$. Such a $j_2$ exists since otherwise either $F$ sits on a vertical line, or $F$ is a product set $P_1 (F) \times [0,1]$. We then define 
\begin{equation*}
\xi=(j_2, j_1, j_2, j_1, j_2, j_1, j_2,...)\in\Omega_m \;\;\;\;\textrm{and}\;\;\;\;x=\pi_m(\xi)
\end{equation*}
Then using Lemma \ref{lem:self-similar-slices},  $F^x$ is the deleted digit set $K'=D(n\Gamma^{j_1}+\Gamma^{j_2},n^2)$, and defining $\xi'$ as before, we similarly find that $F^{x'}$ is a finite union of copies of $K'$ scaled by $n^{l+1}$. We use here the fact that numbers with base-$m$ expansion ending in alternating $j_1$ and $j_2$'s have a unique expansion. The rest of the argument is the same.

\end{proof}

The last proposition applies equally well with the roles of $m,n$ reversed. Indeed, the proof did not use the fact that $m>n$, and while it did rely on Proposition \ref{Proposition 3}, whose conclusion is asymmetric in $m,n$, we only used the part of its conclusion which gives a slice of positive (rather than full) dimension, which applies to both $m$ and $n$.

\subsection{Proof of case (C)}
We now assume that
\begin{equation*}
\Lambda = \lbrace j\in [n] \;|\; \Gamma _j = [m] \rbrace 
\end{equation*}
satisfies $|\Lambda| \geq 2$. We also have $|\Lambda|<n$ since otherwise $F=[0,1]\times P_2F$, which would put us in the product case. Assume towards a contradiction that $\alpha<1$.

Let $K = D(\Lambda  , n)$, so by the bounds on $|\Lambda|$ we have $0<\dim K<1$. For any $z\in K$ the horizontal slice $F_z$ is an interval of length $1$, hence $g(F_z\times\{z\})$ is an interval of length $\alpha$ and, as we saw earlier, it is contained in the slice $F_{z'}$ for $z'=\alpha z+t_2$.  Choosing $p\in\mathbb{N}$ such that
\begin{equation} \label{Equation for p -new1}
\frac{1}{m^p} \leq \alpha < \frac{1}{m^{p-1}}
\end{equation}
we have found that $F_{z'}$ contains an interval of length at least $1/m^p$. By Lemma \ref{lem:size-of-intervals-on-slices}, this means that if $z'$ has a unique base-$n$ expansion then it belongs to the set\[
  E=\lbrace \sum_{k=1}^\infty\frac{y_k}{n^k}\;:\;y_k\in\Lambda\textrm{ for }k\geq p+1\rbrace
\]  
which is a finite union of translates of sets $\frac{1}{n^p}K$. Let $K_0\subseteq K$ denote those $z$ for which $z'$ has  multiple expansions; we have thus shown that  $z\mapsto z'$ maps $K\setminus K_0$ into $E$. But since $K$ is perfect we have $K=\overline{K\setminus K_0}$, and continuity of $z\mapsto z'$ and the fact that $E$ is closed imply that   \begin{equation} \label{equation for K1}
\alpha K + t_2 \subseteq E
\end{equation}
It follows from  Corollary \ref{Corollary from LCT} that $\alpha = n^q$ for some $q \in \mathbb{Q}$. Replacing $g$ by a power of $g$  if necessary, we may assume that $\alpha = \frac{1}{n^l}$ for some $l\in \mathbb{N}$ (we keep all other notation the same, but now $p$ and $E$ are chosen with respect to this power of $g$). By Corollary \ref{Cor:generalized-embedding-of-dd-sets}, $t_2$ is $n$-adic rational, and we can apply Proposition \ref{Prop:common-part-of-first-prop} (with the roles of $m,n$ reversed; see remark after the proof of the Proposition) and conclude that $\alpha=1$.

\subsection{Proof of case (B)}\label{subsec:case-B}

The remaining case is when there exists a unique index $j_1\in[n]$ such that $\Gamma_{j_1}=[m]$. 
We can assume that $0\in P_2F$ (equivalently, $0\in P_2\Gamma$), since if not we can apply a translation to make it so. We can also assume that $j_1\neq 0$, since there certainly are other indices in $P_2\Gamma$ (otherwise $F$ would be contained in a horizontal line), and if $j_1=0$  we could apply a reflection through the $x$-axis (thus reversing the order of vertical slices),  followed by a translation (to ensure again $0\in P_2F$). This modification replaces $g$ with the composition of a homothety and a reflection, but we can pass to $g^2$ again and work with it. We assume that these adjustments have been made.

Suppose towards a contradiction that $\alpha <1$. Let $p\in \mathbb{N}$ be the unique integer such that
\begin{equation} \label{alpha1}
\frac{1}{m^p} \leq \alpha \leq \frac{1}{m^{p-1}}.
\end{equation}
We can assume that
\begin{equation} \label{p is large1}
\frac{m^{p-1}}{n^{p+1}} >1,
\end{equation}
since this holds whenever $p$ is large enough (equivalently, $\alpha$ small enough), so if it was not initially the case, we can just replace $g$ by some power $g^k$ (and consequently  $\alpha$ by $\alpha^k$). 

Let \[
z=\pi_n(j_1,j_1,j_1,\ldots)
\]
This means that \[
  F_z=[0,1]
  \]
 As already noted, $g$ maps horizontal slices into horizontal slices, and in particular, writing $y=\alpha z+t_2$ we have \[
  \alpha F_z + t_1 \subseteq F_y
  \]
  Thus $F_y$ contains an interval of length $\alpha\geq 1/m^p$. Since  $j_1$ is the only digit with $\Gamma_j=[m]$, it follows from Lemma \ref{lem:size-of-intervals-on-slices} (2) that \[
  y= \pi_n(y_1,y_2,\ldots y_p,j_1,j_1,j_1,\ldots)
  \]
for some choice of $y_k \in P_2\Gamma$.

Next let  \[
z'= \pi_n(0,j_1,j_1,j_1,\ldots)
\]
which, since $0\in P_2\Gamma$, is an element of $P_2F$. Arguing as above we find that $F_{z'}$ contains an interval of length  $\frac{1}{m}$,  and, setting  $y'=\alpha z' +t_2$, we see that $F_{y'}$ contains an interval of  length $\alpha / m\geq 1/m^{p+1}$, and so \[
  y'=\pi_n(y'_1,y'_2,\ldots,y'_{p+1},j_1,j_1,j_1,\ldots)
  \]
for some $y_k ' \in [n]$,

Now, on the one hand, since $z,z'$ differ only in their first coordinates, which are respectively $j'$ and $0$, and since $\alpha\leq 1/m^{p+1}$ and $j_1\leq n$,\[
|y-y'|=\alpha|z-z'|=\alpha \frac{j_1}{n}\leq \frac{1}{m^{p-1}}
\]
On the other hand, the expansions of $y,y'$ agree from in their $p+2$-th digit and differ in their first $p+1$-th digits, since they are not equal, being images of distinct points $z,z'$, so \[
|y-y'|\geq \frac{1}{n^{p+1}}
\]
The last two inequalities contradict \eqref{p is large1}. This completes the proof.

\section{Proof of Theorem \ref{Theorem - structure of tangnet sets}} \label{Section - proof of Theorem tangent sets}
\subsection{Preliminaries and notation} \label{New notation}

Let $F\subseteq[0,1]^{2}$ be a Bedford-McMullen carpet defined by integers
$m>n\geq2$ and digit set $\Gamma\subseteq[m]\times[n]$.

To avoid making frequent exceptions for the empty set, it is convenient
to define the Hausdorff distance between the empty and every other
 compact subset of $Q=[-1,1]^2$ to be $1+\text{diam}(Q)$. With
this definition, we have added a single isolated point to the space $\cmpct(Q)$
of compact subsets of $Q$, preserving compactness.

Given $l\in\mathbb{N}$ and $b\in[n]^{l}$, let 
\begin{eqnarray*}
H(b) & = & \bigcup_{a\in\Gamma_{b}}\left(\left(\begin{array}{cc}
m^{-l} & 0\\
0 & 1
\end{array}\right)F+\left(\begin{array}{c}
\pi_{m}(a)\\
0
\end{array}\right)\right)
\end{eqnarray*}
This is a union of copies of $F$ which have been contracted by $m^{l}$
in the horizontal direction. Each set in the union is contained in
a rectangle of height $1$ and width $1/m^{l}$ with lower left
corner at the $m$-adic rational vector $(\pi_{m}(a),0)$ for some
$a\in\Gamma_{b}$. 

Looking ahead to our identification of $m$-adic tangent sets, we note
that the map $\eta\mapsto H(\eta_{1}\ldots\eta_{l})$
is continuous from $\Omega_{n}$ to $\cmpct(Q)$.
Also,
\begin{Lemma}
For $\eta\in\Omega_{n}$ we have $H(\eta_{1}\ldots\eta_{l})\rightarrow\pi_{m}(\widetilde{F}_{\eta})\times P_{2}(F)$
in the Hausdorff topology.
\end{Lemma}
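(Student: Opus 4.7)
The plan is to give a direct quantitative estimate, proving $d_H(H(\eta_1\ldots\eta_l),\pi_m(\widetilde{F}_\eta)\times P_2(F)) = O(m^{-l})$, which of course suffices for Hausdorff convergence. First I would dispose of the degenerate case: if $\widetilde{F}_\eta = \emptyset$, then some $\Gamma_{\eta_{i_0}}$ is empty, so for all $l \geq i_0$ the set $\Gamma_{\eta_1\ldots\eta_l}$ is empty and hence $H(\eta_1\ldots\eta_l)=\emptyset$; by the convention that the empty set is an isolated point in $\cmpct(Q)$, the conclusion is trivial in this case. So assume $\Gamma_{\eta_i}\neq \emptyset$ for every $i$, which means every finite prefix word $a \in \Gamma_{\eta_1\ldots\eta_l}$ can be extended to an infinite word in $\widetilde{F}_\eta$.

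For the inclusion that every point of $H(\eta_1\ldots\eta_l)$ lies within $2m^{-l}$ of $\pi_m(\widetilde{F}_\eta)\times P_2(F)$: a point of $H(\eta_1\ldots\eta_l)$ has the form $(m^{-l}f_1 + \pi_m(a),\, f_2)$ with $(f_1,f_2)\in F$ and $a\in \Gamma_{\eta_1\ldots\eta_l}$. Extending $a$ to an infinite word $a\cdot b \in \widetilde{F}_\eta$ gives $\pi_m(a\cdot b)\in \pi_m(\widetilde{F}_\eta)$ with $|\pi_m(a\cdot b)-\pi_m(a)|\leq m^{-l}$, so the horizontal coordinate is within $2m^{-l}$ of $\pi_m(a\cdot b)\in \pi_m(\widetilde{F}_\eta)$. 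The vertical coordinate $f_2$ lies in $P_2(F)$ exactly, so we are done.

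Conversely, given $(x,y)\in \pi_m(\widetilde{F}_\eta)\times P_2(F)$, write $x = \pi_m(c)$ with $c\in \widetilde{F}_\eta$ and set $a = c_1\ldots c_l \in \Gamma_{\eta_1\ldots\eta_l}$. Choose any $x'\in [0,1]$ with $(x',y)\in F$, which exists because $y\in P_2(F)$. Then $(m^{-l}x' + \pi_m(a), y)$ is a point of $H(\eta_1\ldots\eta_l)$, and its horizontal coordinate differs from $x=\pi_m(c)$ by at most $|m^{-l}x'|+|\pi_m(a)-\pi_m(c)|\leq 2m^{-l}$, while its vertical coordinate equals $y$. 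Combining the two inclusions, $d_H(H(\eta_1\ldots\eta_l),\pi_m(\widetilde{F}_\eta)\times P_2(F))\leq 2m^{-l}\to 0$, as required. No step here is genuinely difficult; the one subtlety worth flagging is the use of extendability of prefixes $a\in \Gamma_{\eta_1\ldots\eta_l}$ to infinite words in $\widetilde{F}_\eta$, which is precisely what rules out the empty case and allows approximation of cylinder centers by true slice points.
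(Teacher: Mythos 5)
Your proof is correct, and it is precisely the ``straightforward calculation'' that the paper leaves implicit: a direct two-sided estimate giving $d_H\bigl(H(\eta_1\ldots\eta_l),\,\pi_m(\widetilde{F}_\eta)\times P_2(F)\bigr)\le 2m^{-l}$, together with the right handling of the empty case under the paper's convention that $\emptyset$ is an isolated point of the hyperspace. The key observation you flag --- that $\Gamma_{\eta_1\ldots\eta_l}=\prod_{i=1}^l\Gamma_{\eta_i}$, so every prefix extends to an infinite word of $\widetilde{F}_\eta$ --- is exactly what makes the approximation work in both directions, and your argument matches the paper's intended route.
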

\begin{proof}
  Follows by a straightforward calculation, using item Proposition \ref{Proposition - Hausdorff metric0}\eqref{H-limit-pointwise}. 
\end{proof}

Next, given $b\in[n]^{l}$ and $1\leq s\leq 1$, we say that $E\subseteq[-2,2]^2$
is a basic $(b,s)$-set if it is of the form 
\[
\left(\begin{array}{cc}
1\\
 & n^s
\end{array}\right)\cdot\left(H(b)+z\right)
\]
for some $z\in\mathbb{R}^2$. For $\eta\in\Omega_{n}$ we have already defined
an $(\eta,s)$-set to be a set of the form
\[
\left(\begin{array}{cc}
1\\
 & n^s
\end{array}\right)\cdot\left(\pi_{m}(\widetilde{F}_{\eta})\times P_{2}(F)+z\right)
\]
which is contained in $[-2,2]^{2}$.
The last lemma and Proposition \ref{Proposition - Hausdorff metric0}\eqref{H-limit-on-open-set} now give:

\begin{Lemma}
\label{lem:HAusdorff-convergence-on-Q}Let $\eta\in\Omega_{n}$, let
$E_{i}$ be $(\eta_{1}\ldots\eta_{l_{i}},s_{i})$-sets with $l_{i}\rightarrow\infty$
and $0\leq s_{i}\leq 1$. If $E_{i}\rightarrow E$ in the Hausdorff
topology, then $E$ is an $(\eta,s)$-set, and if $E_{i}\cap[-1,1]^{2}\rightarrow E'$,
then $E\cap(-1,1)\subseteq E'\subseteq E$.
\end{Lemma}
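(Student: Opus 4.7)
The plan is to combine the previous lemma's Hausdorff-convergence statement for $H(\eta_1\ldots\eta_l)$ with compactness arguments to identify the limit. Since each $E_i$ is a basic $(\eta_1\ldots\eta_{l_i},s_i)$-set, it can be written as
\[
E_i = \begin{pmatrix} 1 & 0 \\ 0 & n^{s_i} \end{pmatrix}\bigl(H(\eta_1\ldots\eta_{l_i})+z_i\bigr)
\]
for some $z_i\in\mathbb{R}^2$, with $E_i\subseteq[-2,2]^2$. The first step is to extract, along a subsequence, limits $s_i\to s\in[0,1]$ (using compactness of $[0,1]$) and $z_i\to z\in\mathbb{R}^2$. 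The latter requires noting that since $H(b)\subseteq[0,1]^2$ is non-empty and the linear scaling factor $n^{s_i}$ lies in $[1,n]$, the containment $E_i\subseteq[-2,2]^2$ forces $z_i$ into a bounded subset of $\mathbb{R}^2$, so a subsequential limit exists.

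Next, along this subsequence, apply the previous lemma, namely $H(\eta_1\ldots\eta_{l_i})\to\pi_m(\widetilde{F}_\eta)\times P_2(F)$ in the Hausdorff topology. Combined with Proposition \ref{Proposition - Hausdorff metric0}\eqref{H-limit-of-image}, applied to the continuous map $(X)\mapsto \begin{pmatrix}1 & 0\\ 0 & n^{s_i}\end{pmatrix}(X+z_i)$ (whose dependence on $i$ is uniformly continuous because $s_i,z_i$ converge), we conclude that $E_i\to E^*$, where
\[
E^* = \begin{pmatrix} 1 & 0 \\ 0 & n^{s} \end{pmatrix}\bigl(\pi_m(\widetilde{F}_\eta)\times P_2(F)+z\bigr).
\]
Since $E^*\subseteq[-2,2]^2$ (being the Hausdorff limit of sets in $[-2,2]^2$), it is, by definition, an $(\eta,s)$-set. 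By hypothesis $E_i\to E$, so along the chosen subsequence we have $E=E^*$, proving the first assertion.

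For the second assertion, note that each $E_i\cap[-1,1]^2=E_i\cap Q$ is the intersection of the convergent sequence $E_i$ with the closed set $Q$ whose interior is $(-1,1)^2$. Applying Proposition \ref{Proposition - Hausdorff metric0}\eqref{H-limit-on-open-set} with $X_i=E_i$, $X=E$, and $Y=Q$, we obtain $E\cap(-1,1)^2\subseteq E'\subseteq E\cap Q = E$, which is the claimed inclusion. The main technical point in the whole argument is securing boundedness of the translations $z_i$ so that a subsequential limit $z$ exists; once this is in hand, everything else reduces to the continuity-of-limit facts already assembled.
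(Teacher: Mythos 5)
Your proof is correct and follows exactly the route the paper intends: the paper derives this lemma in one line from the convergence $H(\eta_1\ldots\eta_l)\to\pi_m(\widetilde{F}_\eta)\times P_2(F)$ together with Proposition \ref{Proposition - Hausdorff metric0}, and your write-up simply supplies the details (boundedness of the translations $z_i$, passage to subsequential limits of $s_i,z_i$, and item \eqref{H-limit-on-open-set} with $Y=Q$ for the second assertion).
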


\subsection{Structure of $m$-adic mini-sets}

For $a,b \in [m]^l \times [n]^k$ where $k,l\geq 0$ are integers, we use the notation, 
\begin{equation*}
[a]\times [b] = \lbrace (\omega, \eta) \in \Gamma^\NN : (\omega_1\ldots\omega_l , \eta_1\ldots\eta_k) = (a,b) \rbrace.
\end{equation*}
Notice that this is not the product of cylinders in $([m]\times[n])^\NN$, but rather in $\Gamma^\NN$, so the projection $\pi_{m,n}([a]\times[b])$ is an affine image of $F$, not of the unit square (for a more precise statement see the proof of Lemma \ref{lem:mini-sets} below). Also, there is notational conflict with the notation $[m]=\{0,\ldots,m-1\}$, but which is meant should be clear from the context).

Fix $f=(f_{1},f_{2})\in F$ and $l\in\mathbb{N}$. We consider
the $m$-adic mini-set 
\[
T(F,f,m,l)= [ m^l ( F -f)]\cap Q =m^{l}((F-f)\cap m^{-l}Q)
\]
Define 
\[
k=\left\lfloor l\log_{n}m\right\rfloor 
\]
(we suppress the dependence on $l$ in the notation). This is the
unique integer satisfying 
\[
m^{-l}<n^{-k}\leq m^{-(l-1)}
\]
Since $m>n$ we have $k\geq l$, and $k>l$ for large enough
$l$. 

In what follows, for $a\in[m]^{k}$ we write $a=a'a''$ with $a'\in[m]^{l}$
and $a''\in[m]^{k-l}$. We similarly write $b=b'b''$ when $b\in[n]^{k}$.
\begin{Lemma}
Let $f\in F$. If $a'\in[m]^{l}$ and $b\in[n]^{k}$ and if $\pi_{m,n}([a']\times[b])$
intersects $f+m^{-l}Q$ non-trivially, then $|\pi_{m}a'-f_{1}|\leq2m^{-l}$
and $|\pi_{n}b-f_{2}|\leq2m^{-l}$, and given $f$, there are at most four
possibilities for the prefix $a'$ and at most four possibilities
for $b$.
\end{Lemma}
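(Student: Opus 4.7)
The plan is to carry out the elementary geometric computation that underlies this lemma, based on the observation that the cylinder $\pi_{m,n}([a']\times[b])$ sits inside a rectangle whose dimensions are $m^{-l}\times n^{-k}$, which is comparable to $m^{-l}\times m^{-l}$ by the defining inequality $m^{-l}<n^{-k}\leq m^{-(l-1)}$.

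First, I will note that any element $(\pi_m\omega,\pi_n\eta)\in\pi_{m,n}([a']\times[b])$ has its $x$-coordinate inside the closed interval $[\pi_m a',\pi_m a'+m^{-l}]$, since the tail of $\omega\in[m]^\NN$ past position $l$ contributes an amount in $[0,m^{-l}]$; similarly, the $y$-coordinate lies in $[\pi_n b,\pi_n b+n^{-k}]$. Hence $\pi_{m,n}([a']\times[b])$ is contained in the rectangle $[\pi_m a',\pi_m a'+m^{-l}]\times[\pi_n b,\pi_n b+n^{-k}]$.

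Second, the box $f+m^{-l}Q$ is the square $[f_1-m^{-l},f_1+m^{-l}]\times[f_2-m^{-l},f_2+m^{-l}]$. Nontrivial intersection with the cylinder rectangle forces both coordinate intervals to overlap. For the $x$-coordinate, overlap of $[\pi_m a',\pi_m a'+m^{-l}]$ with $[f_1-m^{-l},f_1+m^{-l}]$ gives $\pi_m a'\in[f_1-2m^{-l},f_1+m^{-l}]$, yielding $|\pi_m a'-f_1|\leq 2m^{-l}$. For the $y$-coordinate, overlap of $[\pi_n b,\pi_n b+n^{-k}]$ with $[f_2-m^{-l},f_2+m^{-l}]$ gives $\pi_n b\in[f_2-m^{-l}-n^{-k},f_2+m^{-l}]$; using $n^{-k}\leq m^{-(l-1)}$ and (for large $l$) the comparability of $n^{-k}$ and $m^{-l}$ yields the claimed bound on $|\pi_n b - f_2|$ up to an absolute constant.

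Third, I will count grid points. The values $\pi_m a'$ for $a'\in[m]^l$ are exactly the $m$-adic rationals with denominator $m^l$, so they lie on a grid of spacing $m^{-l}$. The interval $[f_1-2m^{-l},f_1+m^{-l}]$ has length $3m^{-l}$, and thus contains at most $4$ such grid points, giving at most $4$ possibilities for $a'$. Analogously, the values $\pi_n b$ for $b\in[n]^k$ form a grid of spacing $n^{-k}$, and the interval $[f_2-m^{-l}-n^{-k},f_2+m^{-l}]$ has length $2m^{-l}+n^{-k}\leq 3n^{-k}$ (since $m^{-l}<n^{-k}$), hence contains at most $4$ such grid points.

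There is no real obstacle here: the lemma is a bookkeeping exercise, and the slight asymmetry between the $m$- and $n$-directions (the grid spacings differ by a factor between $1$ and $n$) is absorbed comfortably into the constant $4$. The only point requiring mild care is that the naive bounds can suggest $5$ grid points; checking the precise length of the enclosing intervals trims this to $4$.
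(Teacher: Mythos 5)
Your proof is correct and is exactly the elementary computation the paper has in mind (its own proof is the one-line remark ``Elementary using $k\geq l$ and the fact an interval of length $c\cdot m^{-l}$ contains at most $\lfloor c\rfloor+4$ reduced rationals with denominator $m^l$...''), and your grid-point count is in fact sharper than the paper's. Your observation that the vertical bound only comes out as $|\pi_n b-f_2|\leq m^{-l}+n^{-k}\leq 2n^{-k}\leq 2m^{-(l-1)}$ rather than the stated $2m^{-l}$ is a genuine (harmless) imprecision in the lemma's constant, and your handling of it is fine since only the order of magnitude is used later.
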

\begin{proof}
Elementary using $k\geq l$ and the fact an interval of length $c\cdot m^{-l}$
contains at most $\left\lfloor c\right\rfloor +4$ reduced rationals
with denominator $m^{l}$ and at most $\left\lfloor c\right\rfloor +4$
reduced rationals with denominator $n^{k}$. 
\end{proof}

By the lemma, we can partition the cylinder sets $\pi_{m,n}([a]\times[b])$
which intersect $f+m^{-l}Q$ into finitely many classes according
to the sequence $b$ and the prefix $a'=a_{1}\ldots a_{l}$ of
$a$. Specifically, write $C_{a,b}=\pi_{m,n}([a]\times[b])$ and for
$b\in[n]^{k}$ and $a'\in[m]^{l}$ let 
\[
\mathcal{C}(f,a',b)=\left\{ C_{a,b}\,:\,a'\mbox{ is a prefix of }a\in[m]^{k}\mbox{ and }C_{a,b}\cap(f+m^{-l}Q)\neq\emptyset\right\} 
\]
Note that $\mathcal{C}(f,a',b)\neq\emptyset$ precisely when there exists $a''\in[m]^{k-l}$
with $C_{a'a'',b}\cap(f+m^{-l}Q)\neq\emptyset$, and the latter happens if and only if $(C_{a'a'',b}-f)\cap(m^{-l}Q)\neq\emptyset$.
\begin{Lemma}\label{lem:mini-sets}
Let $l,k$ be as above. Let $b=b'b''\in[n]^{k}$ and $a'\in[m]^{l}$.
Let $\mathcal{C}=\mathcal{C}(f,a',b)$ and $r=\log_n m^{l}/n^{k}=l\log_nm\mod 1$. If
$\mathcal{C}\neq\emptyset$ then $m^{l}(\cup\mathcal{C}-f)$ is a
basic $(b'',s)$-set.
\end{Lemma}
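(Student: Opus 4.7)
The plan is to expand each cylinder $C_{a,b}\in\mathcal{C}$ as an affine image of $F$, apply the scaling $x\mapsto m^l(x-f)$, and algebraically recognize the resulting union as $\diag(1,n^r)(H(b'')+z)$ for a suitable $z\in\mathbb{R}^2$. The whole argument is driven by the identity $m^l/n^k=n^r$ built into the choice $k=\lfloor l\log_nm\rfloor$; the matrix bookkeeping that follows is essentially mechanical once this identity is in play.

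First I would write $C_{a,b}=\diag(m^{-k},n^{-k})F+(\pi_m(a),\pi_n(b))$ and apply $m^l(\cdot-f)$ to obtain
\[
m^l(C_{a,b}-f)\;=\;\begin{pmatrix} m^{-(k-l)} & 0 \\ 0 & n^r \end{pmatrix} F\;+\;\bigl(m^l(\pi_m(a)-f_1),\,m^l(\pi_n(b)-f_2)\bigr).
\]
Splitting $a=a'a''$ and using $\pi_m(a)=\pi_m(a')+m^{-l}\pi_m(a'')$, the first translation coordinate becomes $z_1+\pi_m(a'')$ with $z_1:=m^l(\pi_m(a')-f_1)$ depending only on $a'$ and $f$. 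The second coordinate $z_2:=m^l(\pi_n(b)-f_2)$ depends only on $b$ and $f$, not on $a''$ at all; the preceding lemma ensures $|z_1|,|z_2|\leq 2$.

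Next I would pull the matrix $\diag(1,n^r)$ out of each summand via the identity
\[
\begin{pmatrix} m^{-(k-l)} & 0 \\ 0 & n^r \end{pmatrix} F+(z_1+\pi_m(a''),z_2)\;=\;\begin{pmatrix} 1 & 0 \\ 0 & n^r \end{pmatrix}\!\left(\begin{pmatrix} m^{-(k-l)} & 0 \\ 0 & 1 \end{pmatrix} F+(\pi_m(a''),0)+(z_1,z_2/n^r)\right),
\]
so that each summand is the $a''$-piece of $H(b'')$ translated by the $a''$-independent vector $z:=(z_1,z_2/n^r)$. Unioning over those $a''$ that appear in $\mathcal{C}$ and pulling out the common shift $z$ exhibits $m^l(\cup\mathcal{C}-f)$ in the form $\diag(1,n^r)(H'+z)$, where $H'$ is the sub-union of $H(b'')$ indexed by the admissible $a''\in\Gamma_{b''}$.

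The step I expect to need the most care is showing $H'=H(b'')$, so that this is a genuine basic $(b'',r)$-set and not merely a sub-union. The vertical half of the admissibility condition depends only on $b$ and is automatic once $\mathcal{C}\neq\emptyset$. For the horizontal half the $a''$-pieces of $C_{a'a'',b}$ tile the strip $[\pi_m(a'),\pi_m(a')+m^{-l}]$, and I would have to argue — using the cylinder width $m^{-k}\ll m^{-l}$ (since $k>l$ for $l$ sufficiently large) together with $|\pi_m(a')-f_1|\leq 2m^{-l}$ — that no admissible $a''$ can have its $m^{-k}$-subinterval disjoint from the $2m^{-l}$-window $[f_1-m^{-l},f_1+m^{-l}]$. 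This combinatorial bookkeeping is the genuine content of the lemma; once it is in hand, the matrix identities above produce the basic-set form immediately, and the containment in $[-2,2]^2$ follows from $|z_1|,|z_2|\leq 2$ and $n^r<n$.
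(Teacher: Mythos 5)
Your algebraic reduction is exactly the computation in the paper's proof, and it is correct; you are in fact slightly more careful than the paper, whose final display uses the same vector $z=m^l(\pi_m(a')-f_1,\pi_n(b)-f_2)$ both inside and outside the matrix $\diag(1,n^s)$, whereas your explicit $z=(z_1,z_2/n^r)$ gets the bookkeeping right.

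The problem is the step you yourself single out as ``the genuine content'': the claim $H'=H(b'')$, which you propose to establish by showing that no $a''\in\Gamma_{b''}$ can have its $m^{-k}$-column disjoint from the window $f_1+[-m^{-l},m^{-l}]$. That claim is false in general. The columns indexed by $a''\in\Gamma_{b''}$ tile the full strip $[\pi_m(a'),\pi_m(a')+m^{-l}]$, and the constraint from the preceding lemma is only $|\pi_m(a')-f_1|\le 2m^{-l}$: the strip merely has to \emph{meet} the window, not be contained in it. When $\pi_m(a')$ lies near $f_1+m^{-l}$ (or near $f_1-2m^{-l}$), most of the strip protrudes outside the window, and the corresponding cylinders $C_{a'a'',b}$ are disjoint from $f+m^{-l}Q$ and hence excluded from $\mathcal{C}$ by its definition. (A second, smaller obstruction: the cylinders are fractal subsets of their bounding rectangles, so even a column whose rectangle meets the window may carry a cylinder that does not.) Thus $\cup\mathcal{C}$ can genuinely be a proper sub-union of $\bigcup_{a''\in\Gamma_{b''}}C_{a'a'',b}$, and taken literally the lemma fails in these boundary configurations; the paper's own proof quietly makes the same replacement of $\mathcal{C}$ by the full union over $\Gamma_{b''}$ in its first displayed equality. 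The correct resolution is not to prove $H'=H(b'')$ but to weaken the conclusion: $m^l(\cup\mathcal{C}-f)$ is contained in a basic $(b'',s)$-set and contains that set's intersection with $Q$, since the discarded columns lie entirely outside $f+m^{-l}Q$ and hence outside $Q$ after rescaling. This weaker, sandwiched statement is all that Corollary \ref{Corollary X} and the multiset condition \eqref{eq:2} ever use, so you should prove that version rather than chasing an identity that is not true.
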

Note that by choice of $k$ we have $1<r\leq m$. 
\begin{proof}
Given $a''\in\Gamma_{b''}$ set $a=a'a''\in[m]^{k}$, and write $\alpha=\pi_{m}(a)$
$\alpha'=\pi_{m}(a')$ and $\alpha''=\pi_{m}(a'')$. Similarly define
$\beta=\pi_{n}(b)$, $\beta'=\pi_{n}(b')$ and $\beta''=\pi_{n}(b'')$.
In our discussion $b',b''$ and $a'$ are fixed and hence $\alpha',\beta,\beta',\beta''$
are well defined, but $a''$ will vary, and so $\alpha,\alpha''$
are implicitly functions of $a''$, a fact which we suppress in the
notation. Now, 
\begin{eqnarray*}
\cup\mathcal{C} & = & \bigcup_{a''\in\Gamma_{b''}}\left(\pi_{m,n}([a'a'']\times[b'b''])\right)\\
 & = & \bigcup_{a''\in\Gamma_{b''}}\left(\left(\begin{array}{cc}
m^{-k} & 0\\
0 & n^{-k}
\end{array}\right)F+(\pi_{m}(a'a''),\pi_{n}(b'b''))\right)\\
 & = & \bigcup_{a''\in\Gamma_{b''}}\left(\left(\begin{array}{cc}
m^{-k} & 0\\
0 & n^{-k}
\end{array}\right)F+(\alpha,\beta)\right)
\end{eqnarray*}
Using the fact that $\alpha=\alpha'+m^{-l}\alpha''$ and $\beta=\beta'+n^{-l}\beta''$,
and writing $z=m^{l}(\alpha'-f_1,\beta-f_2)$, we get 
\begin{eqnarray*}
m^{l}(\cup\mathcal{C}-f) & = & \bigcup_{a''\in\Gamma_{b''}}\left(m^{l}\left(\begin{array}{cc}
m^{-k}\\
 & n^{-k}
\end{array}\right)F+m^{l}(\alpha,\beta) - m^{l}(f_1,f_2) \right)\\
 & = & \bigcup_{a''\in\Gamma_{b''}}\left(\left(\begin{array}{cc}
m^{-(k-l)} & 0\\
0 & m^{l}/n^{k}
\end{array}\right)F+m^{l}(\alpha'-f_1,\beta-f_2)+(\alpha'',0)\right)\\
 & = & \bigcup_{a''\in\Gamma_{b''}}\left(\left(\begin{array}{cc}
1 & 0\\
0 & n^s
\end{array}\right)\left(\begin{array}{cc}
m^{-(k-l)} & 0\\
0 & 1
\end{array}\right)F+z+(\alpha'',0)\right)\\
 & = & \bigcup_{a''\in\Gamma_{b''}}\left(\left(\begin{array}{cc}
1 & 0\\
0 & n^s
\end{array}\right)\left(\left(\begin{array}{cc}
m^{-(k-l)} & 0\\
0 & 1
\end{array}\right)F+(\alpha'',0)\right) + z\right)\\
 & = & \left(\begin{array}{cc}
1 & 0\\
0 & n^s
\end{array}\right)(H(b)+z)
\end{eqnarray*}
Recalling that $\alpha''=\pi_{m}(a'')$, this shows that $m^{l}(\cup\mathcal{C}-f)$
is a basic $(b'',s)$-set. 
\end{proof}

Given $f,m,l$ and $k$ as above, set
\[
B(f,l)=\left\{ b''\in[n]^{k-l}\,:\,\begin{array}{c}
\exists a\in[m]^{l}\,,\,b'\in[n]^{l}\,\mbox{such that}\\
\pi_{m,n}([a]\times[b'b''])\cap(f+m^{-l}Q)\neq\emptyset
\end{array}\right\} 
\]
Combining the last two lemmas,   we have:
\begin{Corollary} \label{Corollary X}
$|B(f,l)|$ is uniformly bounded in $f,l$  and $T(F,f,m,l)$
is the intersection of $Q$ with the union over $b''\in B(f,l)$
of basic $(b'',s)$-sets, with $s=l\log_nm\bmod 1$
\end{Corollary}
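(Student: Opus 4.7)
The proof proposal has two components, and both largely amount to assembling the two preceding lemmas in this subsection.

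For the uniform bound on $|B(f,l)|$, I would argue as follows. By definition, each $b'' \in B(f,l)$ is the suffix $b_{l+1}\ldots b_k$ of some $b \in [n]^k$ for which $\pi_{m,n}([a]\times[b])$ intersects $f + m^{-l}Q$ for some $a \in [m]^l$. The first lemma of this subsection already gives that there are at most four possibilities for such a $b$ (coming from the bound $|\pi_n(b) - f_2| \le 2m^{-l}$ and the fact that an interval of length $4m^{-l}$ contains at most a bounded number of $n$-adic rationals with denominator $n^k$, independent of $l$). Since $b \mapsto b''$ is a function, we get $|B(f,l)| \le 4$ for all $f$ and $l$.

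For the structural statement, the plan is to decompose $F$ as a finite union of level-$k$ cylinders and then group them appropriately. Write
\[
F = \bigcup_{a \in [m]^k,\, b \in [n]^k} \pi_{m,n}([a]\times[b]) = \bigcup_{a,b} C_{a,b},
\]
where the union runs over admissible pairs (so that $[a]\times[b] \neq \emptyset$ in $\Gamma^\NN$). Since $m^l \cdot m^{-l}Q = Q$, only those cylinders that meet $f + m^{-l}Q$ contribute to $T(F,f,m,l)$. Grouping these contributing cylinders by the pair $(a',b)$, where $a' = a_1\ldots a_l$ is the length-$l$ prefix of $a$, one obtains
\[
T(F,f,m,l) \;=\; Q \cap \bigcup_{(a',b)\,:\,\mathcal{C}(f,a',b)\neq\emptyset} m^l\!\left(\cup\mathcal{C}(f,a',b) - f\right).
\]
By Lemma \ref{lem:mini-sets}, each nonempty piece $m^l(\cup\mathcal{C}(f,a',b) - f)$ is a basic $(b'',s)$-set with $s = l\log_n m \bmod 1$ (where $b=b'b''$). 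Reorganizing the outer union by the suffix $b''$ yields precisely the union of basic $(b'',s)$-sets indexed by $b'' \in B(f,l)$, as required.

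No real obstacles arise, since both pieces (the counting bound and the identification of the grouped pieces with basic $(b'',s)$-sets) have been done in the two preceding lemmas; the only care required is in verifying that the exponent $s$ emerging from Lemma \ref{lem:mini-sets} matches the claim, which follows from $k = \lfloor l \log_n m \rfloor$ and the identity $m^l / n^k = n^{l\log_n m - k} = n^{l\log_n m \bmod 1}$.
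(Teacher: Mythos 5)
Your proof is correct and follows exactly the route the paper intends: the paper's "proof" of this corollary is just the one-line remark "Combining the last two lemmas," and your write-up assembles precisely those two ingredients (the at-most-four bound on admissible $b$, hence on suffixes $b''$, and the identification of each $m^{l}(\cup\mathcal{C}(f,a',b)-f)$ as a basic $(b'',s)$-set via Lemma \ref{lem:mini-sets}). Your closing check that $s=\log_{n}(m^{l}/n^{k})=l\log_{n}m\bmod 1$ also correctly resolves the paper's $r$ versus $s$ notational slip in that lemma.
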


\subsection{Proof of Theorem \ref{Theorem - structure of tangnet sets}}

Recall that for a given $l\in\mathbb{N}$ we defined $k=k(l)=\left\lfloor l\log_{n}m\right\rfloor $
as before, $r(l)=m^{l}/n^{k}$. Set $s(l)=\log_{n}r(l)=l\log_{n}m\bmod1$,
and $(s(l))_{l=1}^{\infty}$. Note that $s(l)$  is just the orbit of $0$ under the rotation
of $\mathbb{R}/\mathbb{Z}$ by $\log_{n}m$.

Recall the definition of $S(u),S'(u)$ and of $(\eta,s)$-multisets from Section \ref{subsec:tangent-sets}. We recall for convenience the theorem we are out to prove:

\begin{theorem*}
Fix $f=(f_{1},f_{2})\in F$ with $f_2\neq 0,1$ and let $\eta\in\pi_n^{-1}(f_2)$. Then for every $m$-adic tangent set
$T\in T(F,f,m)$, there exists $(\xi,s)\in S(\eta)$ such that $T$ is a non-empty union of a $(\xi,s)$-multiset and a $(\overline{\xi},s)$-multiset. Conversely, if $(\xi,s)\in S(f_{2})$, then there is an
$m$-adic tangent set $T\in T(F,f,m)$ which a union of this type.

In the special case when $f_2=0$ or $f_2=1$, the same is true but omitting the $(\overline{\xi},s)$-multiset from the union.
\end{theorem*}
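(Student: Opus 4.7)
The plan is to leverage Corollary \ref{Corollary X}, which already exhibits every mini-set $T(F,f,m,l)$ as the intersection with $Q$ of a uniformly bounded union of basic $(b'',s_l)$-sets, and to pass to the limit via Lemma \ref{lem:HAusdorff-convergence-on-Q}. For the forward direction, I fix $T \in T(F,f,m)$ realized as $T = \lim_{k\to\infty} T(F,f,m,l_k)$ for some $l_k \to \infty$. By Corollary \ref{Corollary X} together with a diagonal argument I pass to a subsequence along which the number of basic components is a fixed constant $N'$, each indexed basic component $E_{i,k}\cap Q$ converges individually in the Hausdorff metric to some $T_i$, and $s_{l_k} \to s$ for some $s \in [0,1)$; then $T = \bigcup_{i=1}^{N'} T_i$ by Proposition \ref{Proposition - Hausdorff metric0}\eqref{H-limit-of-union}.

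The crux is the identification of each $T_i$. For the corresponding length-$k(l_k)$ word $b^{(i,k)} = b'^{(i,k)} b''^{(i,k)}$, the inequality $|\pi_n(b^{(i,k)}) - f_2| \leq 2m^{-l_k}$ together with $m^{-l_k} < n^{-k(l_k)}$ forces $b^{(i,k)}$, read as an integer in base $n$, to lie within a bounded distance of $n^{k(l_k)} f_2$. Consequently, for any fixed $J$, the substring $b^{(i,k)}_{l_k+1}\ldots b^{(i,k)}_{l_k+J}$ eventually agrees with either $\eta_{l_k+1}\ldots \eta_{l_k+J}$ or $\bar\eta_{l_k+1}\ldots \bar\eta_{l_k+J}$; this is the ``branch'' of index $i$. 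Passing to further subsequences, I may assume each $i$ has a fixed branch $\eta^* \in \{\eta, \bar\eta\}$, that $\sigma_n^{l_k}(\eta^*) \to \xi^*$ in $\Omega_n$, and that the translation parameters $z^{(i,k)} \in [-2,2]^2$ arising from Lemma \ref{lem:mini-sets} converge. Lemma \ref{lem:HAusdorff-convergence-on-Q} then sandwiches each $T_i$ between the open and closed intersections with $Q$ of a $(\xi^*,s)$-set. Grouping indices according to branch yields the desired decomposition of $T$ into the union of a $(\xi,s)$-multiset and a $(\bar\xi,s)$-multiset, with the inclusion \eqref{eq:2} coming from the two-sided bound in Lemma \ref{lem:HAusdorff-convergence-on-Q}.

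For the converse, given $(\xi,s) \in S(\eta)$ I pick $l_k \to \infty$ realizing this accumulation point and take $b^{(k)}$ to be the length-$k(l_k)$ prefix of $\eta$, which belongs to the index set $B(f,l_k)$ because the digits of $\eta$ lie in $P_2\Gamma$. This produces a basic $(b''^{(k)}, s_{l_k})$-set in the decomposition of $T(F,f,m,l_k)$. After further subsequencing, Lemma \ref{lem:HAusdorff-convergence-on-Q} shows the resulting limit tangent set contains a $(\xi,s)$-set. The analogous construction starting from $\bar\eta$ (when $f_2$ is $n$-adic) contributes a $(\bar\xi,s)$-set, so together we obtain a tangent set of the required form; non-emptiness is automatic because $0 \in T(F,f,m,l)$ for every $l$. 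In the special case $f_2 \in \{0,1\}$, the unique expansion of $f_2$ means that the level-$l$ window around $f$ meets $F$ only on one side of the horizontal line through $f$, so only one branch occurs in the decomposition and the $(\bar\xi,s)$-multiset is omitted.

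The main obstacle I expect is the combinatorial step showing that every $b^{(i,k)}$ eventually aligns with $\eta$ or $\bar\eta$ in an arbitrarily long initial segment starting from position $l_k+1$. This requires controlling the ``carrying'' that can occur when adding the small integer $b^{(i,k)} - \lfloor n^{k(l_k)} f_2 \rfloor$ to the base-$n$ representation of $\lfloor n^{k(l_k)} f_2 \rfloor$: a long run of $(n-1)$'s or $0$'s in $\eta$ near position $k(l_k)$ can propagate a perturbation through many digits, and one must verify that the affected ``tail'' has bounded length relative to $k(l_k) - l_k$, so that the long initial segment of $b''^{(i,k)}$ still matches $\sigma_n^{l_k}(\eta^*)$ for $\eta^*$ in the correct branch. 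A related subtlety appears when $f_2$ is $n$-adic, where $\sigma_n^l(\eta)$ and $\sigma_n^l(\bar\eta)$ stabilize to constant sequences; here one must carefully match the resulting $\xi^*$ to the $\bar{\cdot}$ operation in the theorem statement and show that the bounded tail-freedom in $b^{(i,k)}$ perturbs the basic set only by translations that are absorbed into the parameter $z$ in the definition of an $(\eta,s)$-set.
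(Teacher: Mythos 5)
Your overall architecture --- Corollary \ref{Corollary X}, extraction of subsequences using the uniform bound on $|B(f,l)|$, Lemma \ref{lem:HAusdorff-convergence-on-Q}, and Proposition \ref{Proposition - Hausdorff metric0}\eqref{H-limit-of-union} --- is the same as the paper's, and your treatment of the converse and of the case $f_2\in\{0,1\}$ is in the right spirit. But the step you yourself flag as ``the main obstacle'' is not a technical verification that can be completed as stated: the claim is false. You assert that for fixed $J$ the window $b^{(i,k)}_{l_k+1}\ldots b^{(i,k)}_{l_k+J}$ eventually agrees with the corresponding window of $\eta$ or of $\overline{\eta}$. Take $f_2$ with a \emph{unique} expansion $\eta$ (so $\overline{\eta}=\eta$) whose digits after position $1$ are $0$ except at a very sparse set of positions $N_1<N_2<\cdots$. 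For $l_k<N_j$ with $k(l_k)<N_{j}$, the integer $\lfloor n^{k(l_k)}f_2\rfloor-1$ has base-$n$ digit $n-1$ throughout the window $[l_k+1,k(l_k)]$, and since $|\pi_n(b)-f_2|\leq 2m^{-l_k}<2n^{-k(l_k)}$ permits exactly such integer perturbations, for suitable $\Gamma$ a word $b^{(i,k)}$ of this form genuinely occurs in $B(f,l_k)$; its window is all $(n-1)$'s while $\eta$'s is all $0$'s. So the perturbation tail is \emph{not} of bounded length relative to $k(l_k)-l_k$, and the dichotomy ``$\eta$ or $\overline{\eta}$'' is the wrong one: the two admissible branches are $\xi$ and $\overline{\xi}$, the two expansions of the \emph{limit} $y=\lim n^{l_k}f_2\bmod 1$, which need not agree with any shift of $\eta$ or $\overline{\eta}$ at any finite stage. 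An argument organized around the branches $\{\eta,\overline\eta\}$ would, in the example above, miss the $(\overline{\xi},s)$-components entirely.

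The paper closes exactly this gap by not matching digits at all. One first passes to the limit, obtaining finitely many $(\zeta,s)$-sets with $\zeta=\lim b''(i)$, and only then identifies $\zeta$ using the semi-conjugacy of $\sigma_n$ with multiplication by $n$ via $\pi_n$: the estimate $|\pi_n(b(i))-f_2|\leq 2m^{-l(i)}$ yields $|\pi_n(b''(i))-n^{l(i)}f_2|\leq 2\,n^{l(i)}/m^{l(i)}\to 0$ in $\mathbb{R}/\mathbb{Z}$, whence by continuity of $\pi_n$ every such $\zeta$ satisfies $\pi_n(\zeta)=y$. Since $y$ has at most two base-$n$ expansions, and since a limit point $\xi$ of $\sigma_n^{l_k}\eta$ is one of them (because $\pi_n(\sigma_n^{l_k}\eta)=n^{l_k}f_2\to y$), every $\zeta$ equals $\xi$ or $\overline{\xi}$, which is precisely the statement of the theorem. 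If you wish to retain a combinatorial formulation you must replace $\{\eta,\overline{\eta}\}$ by $\{\xi,\overline{\xi}\}$ and let the agreement hold only in the limit --- at which point you have reconstructed the paper's argument.
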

\begin{proof}
We prove the first statement, though it applies also in the case $f_2=0,1$  (only the conclusion is slightly weaker). Note that the property that $T$ is a union of a $(\xi,s)$-multiset and $(\overline{\xi},s)$-multiset is the same as saying that for some $y$ it is a union of finitely many $(\xi,s)$-sets with $\pi_n(\xi)=y\bmod 1$. We shall prove this version.

  Fix $T\in T(F,f,m)$ and let $l(i)\rightarrow\infty$ be such that
$E_{i}=m^{l(i)}(F-f)\cap Q\rightarrow T$ , as $i\rightarrow\infty$,
in the Hausdorff metric. Write $s(i)=l(i)\cdot\log_{n}m\bmod1$,
so by Corollary \ref{Corollary X}, $T(F,f,m,l(i))$ is the union of basic
$(b'',{s(i)})$-sets for $b''\in B(f,l(i))\subseteq[n]^{k(i)-l(i)}$. 

Passing to a subsequence if necessary, we may assume that  $n^{l(i)}f_{2}\rightarrow y$
in $\mathbb{R}/\mathbb{Z}$ and that $s(i)\rightarrow s$ in $\mathbb{R}/\mathbb{Z}$.
Since the sets $B(f,l(i))$ are finite and of bounded cardinality,
by possibly passing to a further sub-sequence, it follows from Lemma
\ref{lem:HAusdorff-convergence-on-Q} and Proposition \ref{Proposition - Hausdorff metric0} item 2 that there are finitely many
$(\eta,s)$-sets $E_{j}$, for certain $\eta\in\Omega_{n}$, such
that \eqref{eq:2} holds, and the left hand side is not empty because it contains $0$. It remains only to show that $\eta\in\pi_{n}^{-1}(y)$.

Indeed, consider one of the basic $(\eta,s)$-set $E_{j}$ in the
union \eqref{eq:2} that $T$ satisfies. This means that there exist $b(i)=b'(i)b''(i)\in[n]^{k(i)}$
(so $b''(i)\in[n]^{k(i)-l(i)}$) such that $b''(i)\rightarrow\eta$
in the obvious sense, such that 
\begin{equation}
|\pi_{n}(b(i))-f_{2}|\leq2m^{-l(i)}\label{eq:1}
\end{equation}
and such that $E_{j}$ is the limit of basic $(b''(i),s(i)$-sets.
Since the shift $\sigma_n$ on $\Omega_{n}$ is semi-conjugated by $\pi_{n}$
to multiplication by $n$ on $\mathbb{R}/\mathbb{Z}$ and $b''(i)=\sigma^{l(i)}(b(i))$,
\eqref{eq:1} implies that as $i\rightarrow\infty$, 
\[
|\pi_{n}(b''(i))-n^{l(i)}f_{2}|=|n^{l(i)}\pi_{n}(b''(i))-n^{l(i)}f_{2}|\leq2\frac{n^{l(i)}}{m^{l(i)}}
\]
where $\left|\cdot\right|$ denotes distance in $\mathbb{R}/\mathbb{Z}$.
The right hand side is $o(1)$ as $i\rightarrow\infty$. Since $b''(i)\rightarrow\eta$
and $n^{l(i)}b(i)\rightarrow y\bmod1$, and since $\pi_{n}$ is
continuous, this means that $\pi_{n}(\eta)=y$, as claimed.

The converse direction is similar: starting from the sequence $l(i)$
such that ,
\begin{equation*}
(n^{l(i)}f_{2},l(i)\log_{n}m)\rightarrow(y,s)\bmod1
\end{equation*}
we pass to a further subsequence so that $T(F,f,m,l(i))\rightarrow T \in T(F,f,m)$
for some $T$. The same analysis then shows that $T$ is a non-empty union of a $(\xi,s)$-multiset and $(\overline{\xi},s)$-multiset for $\xi\in\pi_n. ^{-1}(y)$.

Finally, in the case $f_2 =0,1$, the same analysis applies. The only difference is that \eqref{eq:1} implies that $b''$ agrees on a growing number of digits with the (unique) expansion of $f_2$. From here the argument is the same, but without taking $\bmod 1$.
\end{proof}

\bibliography{bib}{}
\bibliographystyle{plain}

\bigskip
\footnotesize
\begin{description}
  \item[Address] Einstein Institute of Mathematics, Edmond J. Safra Campus, The Hebrew University of Jerusalem, Givat Ram. Jerusalem, 9190401, Israel.
  \item[Email] \texttt{amir.algom@mail.huji.ac.il}
    
    \hspace*{2pt} \texttt{mhochman@math.huji.ac.il}
\end{description}

\end{document}